\renewcommand{\leq}{\leqslant}
\renewcommand{\geq}{\geqslant}
\newcommand{\N}{\mathbb{N}}
\newcommand{\Z}{\mathbb{Z}}
\newcommand{\R}{\mathbb{R}}
\newcommand{\C}{\mathbb{C}}
\newcommand{\End}{\mathrm{End}}
\newcommand{\Tr}{\mathop{\mathrm{Tr}}}
\newcommand{\STr}{\mathop{\mathbf{S}\mathrm{Tr}}}
\newcommand{\LIM}{\mathop{\mathrm{LIM}}}
\newtheorem{thm}{Theorem}
\newtheorem{lem}{Lemma}
\newtheorem{prop}{Proposition}
\newtheorem{cor}{Corollary}
 \title[Anomaly formulas for the analytic torsion on bordisms]{Anomaly formulas for the complex-valued analytic torsion on compact bordisms}
  \author{Osmar MALDONADO MOLINA}
  \date{\today}
  \subjclass[2000]{58J52, 57R20}
  \keywords{Ray--Singer torsion, complex-valued analytic torsion, manifolds with boundary, bilinear forms, Hermitian forms, Laplace type operators, mixed boundary conditions, anomaly formulas, heat trace asymptotic expansion}
  \thanks{The author was supported by the IK I008-N from the University of Vienna and
  the grant P19392-N13 from the Austrian Science Fund (FWF)}
  \address{Department of Mathematics,
  University of Vienna,
  Nordbergstrasse 15,
  A-1090 Vienna,
  Austria}
 \email{osmar.maldonado@univie.ac.at}
\begin{document}
\selectlanguage{english}
\thispagestyle{empty}
\begin{abstract}
We extend the complex-valued analytic torsion, introduced by Burghelea and Haller on closed manifolds,
to compact Riemannian bordisms. We do so by considering a flat complex vector bundle over a compact Riemannian manifold, endowed with
a fiberwise nondegenerate symmetric bilinear form. The Riemmanian metric and the bilinear form are used to define
non-selfadjoint Laplacians acting on vector-valued smooth forms under absolute and relative boundary conditions. In the process to define the complex-valued analytic torsion,
we study spectral properties
associated to these generalized Laplacians.
As main results, we obtain anomaly formulas for the complex-valued analytic torsion.
Our reasoning takes into account that the coefficients in the heat trace asymptotic expansion associated to
the boundary value problem under consideration, are locally computable.
The anomaly formulas for the complex-valued Ray--Singer torsion are obtained
by using the corresponding ones for the Ray--Singer metric, obtained by Br\"uning and Ma on manifolds with boundary, and
an argument of analytic continuation. In odd dimensions, our anomaly formulas are in accord with the corresponding results of Su, without requiring
the variations of the Riemannian metric and bilinear structures to be supported in the interior of the manifold.
\end{abstract}

\maketitle
\section*{Introduction}
In this paper, we denote by $(M,\partial_{+}M,\partial_{-}M)$ a compact Riemannian bordism.
That is, $M$ is a compact Riemannian manifold of dimension $m$, with Riemannian metric $g$, whose boundary $\partial M$ is
the disjoint union of two closed submanifolds $\partial_{ +}M$ and $\partial_{-}M$. For $E$ a
flat complex vector bundle over $M$, we consider generalized Laplacians acting on the space $\Omega(M;E)$ of $E$-valued smooth differential forms on $M$
satisfying absolute boundary conditions on $\partial_{+}M$ and relative boundary conditions on $\partial_{-}M$.

We study the \textit{complex-valued} Ray--Singer torsion
on $(M,\partial_{+}M,\partial_{-}M)$. This torsion was
introduced by Burghelea and Haller on closed manifolds,
see \cite{Burghelea-Haller} and \cite{Burghelea-Haller2}, as a complex-valued version for
the real-valued Ray--Singer torsion, originally studied by Ray and Singer in \cite{Ray-Singer}
for unitary flat vector bundles on closed manifolds.
Our main results are
Theorem \ref{Theorem infinitesimal_constant_terms_in_asymptotic_expansions_bilinear_case_relative_and absolute_boundary_conditions}
 and Theorem
\ref{Theorem_Anomaly_formulas_complex_Analytic_Ray_Singer_torsion}.
In Theorem
\ref{Theorem_Anomaly_formulas_complex_Analytic_Ray_Singer_torsion}, we provide so-called \textit{anomaly formulas} providing a logarithmic derivative for
the complex-valued analytic torsion on compact Riemannian bordisms and its proof is based on the work by
Br\"uning and Ma in \cite{Bruening-Ma}
for the real-valued Ray--Singer torsion on manifolds with boundary.

The classical (\textit{real-valued}) Ray--Singer analytic torsion, see \cite{Ray-Singer}, \cite{Lueck}, \cite{Cheeger79}, \cite{Mueller78} and others,
is defined in terms of a selfadjoint Laplacian $\Delta_{E,g,h}$, constructed by using
a Hermitian metric on the bundle, the Riemannian metric $g$ and a flat connection $\nabla^{E}$ on $E$. In this paper $\Delta_{E,g,h}$ is referred as the \emph{Hermitian Laplacian}.
In \cite{Bismut-Zhang}, Bismut and Zhang interpreted the analytic torsion as a Hermitian metric in certain determinant line, and called it the Ray--Singer metric, see also \cite{Bruening-Ma2}.
In this paper, we also adopt this approach.
The Ray--Singer metric on manifolds with boundary has been intensively studied by several authors, among them
\cite{Ray-Singer}, \cite{Cheeger79}, \cite{Mueller78}, \cite{Mueller93}, \cite{Lueck}, \cite{Dai-Fang}, \cite{Bruening-Ma} and \cite{Bruening-Ma2}. In particular, we are interested in the work of Br\"uning and Ma in \cite{Bruening-Ma}, where the variation of the Ray--Singer metric,
with respect to smooth variations
on the underlying Riemannian and Hermitian metrics, was computed.

In order to define the \textit{complex-valued} Ray--Singer torsion, we
assume $E$ admits a fiberwise nondegenerate symmetric bilinear form $b$ and we proceed as in \cite{Burghelea-Haller}. The
bilinear form $b$ and the Riemannian
metric $g$ induce a nondegenerate symmetric bilinear form on $\Omega(M;E)$ which is denoted by $\beta_{g,b}$.
With this data, one constructs generalized Laplacians
$\Delta_{E,g,b}:\Omega(M;E)\rightarrow\Omega(M;E)$, also referred as \textit{bilinear Laplacians}. These generalized Laplacians are formally symmetric,
with respect to $\beta_{g,b}$ on the space of smooth forms satisfying
the boundary conditions specified above.

In Section
\ref{Section_Complex valued analytic torsion on Manifolds with Boundary}, we use known
theory on boundary value problems for differential operators
to treat ellipticity, regularity and spectral properties for $\Delta_{E,g,b}$. In particular,
under the specified
elliptic boundary conditions, $\Delta_{E,g,b}$ extends
to a not necessarily selfadjoint closed unbounded operator in the $L^{2}$-norm, it
has compact resolvent and discrete spectrum,
all its eigenvalues are of finite multiplicity, its (generalized) eigenspaces
contain smooth differential forms only and the restriction of $\beta_{g,b}$ to each of these
is also a nondegenerate bilinear form. Proposition
\ref{Proposition_from_Hodge_decomposition_2} gives Hodge decomposition results
in this setting, which are analog to the Hermitian situation, described for instance in
\cite{Cheeger79}, \cite{Mueller78}, \cite{Lueck} and more recently
in \cite{Bruening-Ma2}. Section \ref{Section_Complex valued analytic torsion on Manifolds with Boundary} ends with
Proposition \ref{Proposition_Absolute_relative_cohomology} stating
that the $0$-generalized eigenspace of $\Delta_{E,g,b}$ \emph{still}
computes relative cohomology $H(M,\partial_{-}M;E)$, without necessarily being isomorphic to it.

In Section \ref{section_Heat_asymptotics and anomaly formulas}, we recall generalities on the
coefficients of the heat kernel asymptotic expansion
for an elliptic boundary value problem.
These coefficients are spectral invariants and
locally computable as polynomial functions in the jets of the symbols of
the operators under consideration, see \cite{Greiner}, \cite{Seeley}, \cite{Seeleya} and \cite{Seeleyb}.
This fact 
provides
the key ingredient
in the proofs
of Theorem
\ref{Theorem infinitesimal_constant_terms_in_asymptotic_expansions_bilinear_case_relative_and absolute_boundary_conditions},
leading to Theorem \ref{Theorem_Anomaly_formulas_complex_Analytic_Ray_Singer_torsion}. In \cite{Bruening-Ma}, 
based on the computation of the coefficients of the constant terms in the heat trace asymptotic expansion for the
Hermitian Laplacian under absolute boundary conditions, Br\"uning and Ma obtained anomaly formulas for the Ray--Singer metric.
First, we use Poincar\'e duality in terms of
Lemma \ref{Lemma_relation_between the infinitesimal_constant_terms_in_asymptotic_expansions_Hermitian_relative_and absolute_boundary_conditions_total},
to infer from  \cite{Bruening-Ma},
the corresponding coefficients for the Hermitian Laplacian under relative boundary conditions and then we derive 
those corresponding to Hermitian Laplacian on the bordism $(M,\partial_{+}M,\partial_{-}M)$ under absolute and relative boundary conditions, see Proposition
\ref{Proposition_infinitesimal_constant_term_in_asymptotic_expansion_Hermitian_case_relative_boundary_conditions} and
Theorem \ref{Theorem infinitesimal_constant_terms_in_asymptotic_expansions_Hermitian_case_relative_and absolute_boundary_conditions}. 
We point out here that the anomaly formulas for the Ray--Singer metric
in Theorem
\ref{Theorem infinitesimal_constant_terms_in_asymptotic_expansions_Hermitian_case_relative_and absolute_boundary_conditions}
were also obtained by Br\"uning and Ma in
\cite{Bruening-Ma2} continuing their work in \cite{Bruening-Ma}. Next,
in Lemma
\ref{Lemma_holomorphic_dependance_of_a_m_on_a_parameter_u}, we point out the holomorphic dependance of these coefficients
on a complex parameter. Finally, an analytic continuation argument
allows one to deduce the infinitesimal variation of these quantities
for the bilinear Laplacian on the bordism $(M,\partial_{+}M,\partial_{-}M)$
from those corresponding to the Hermitian one, see Theorem
\ref{Theorem infinitesimal_constant_terms_in_asymptotic_expansions_bilinear_case_relative_and absolute_boundary_conditions}.

In Section \ref{Subsection_Complex analytic Ray--Singer torsion}, we use the results from Section \ref{Section_Complex valued analytic torsion on Manifolds with Boundary}
and Section \ref{section_Heat_asymptotics and anomaly formulas} to
define the complex-valued analytic
torsion on a compact Riemannian bordism. Following the approach in \cite{Burghelea-Haller}, we
obtain
a nondegenerate bilinear form on the determinant line
$\det(H(M,\partial_{-} M;E))$, denoted by $\tau_{E,g,b}(0)$ and induced  by
the restriction of $\beta_{g,b}$
to the generalized $0$-eigenspace of $\Delta_{E,g,b}$.
The (inverse square of) the complex-valued Ray--Singer torsion for manifolds with boundary is
$$
\tau^{\mathsf{RS}}_{E,g,b}
:=\tau_{E,g,b}(0)\cdot\prod_{p}\left(\left.\det\right.^{\prime}
\left(\Delta_{E,g,b,p}\right)\right)^{(-1)^{p}p},
$$
where the product above is, in this situation, a non zero complex number with
$\left.\det\right.^{\prime}
\left(\Delta_{E,g,b,p}\right)$ being the
$\zeta$-regularized product of all non-zero eigenvalues of
$\Delta_{E,g,b,p}$.
For closed manifolds,
the variation of the complex analytic Ray--Singer torsion, with respect to smooth changes on the metric $g$ and the
bilinear form $b$, has been obtained in \cite[Sections 7 and 8]{Burghelea-Haller}.
Burghelea and Haller obtained in \cite[Theorem 4.2]{Burghelea-Haller}
a geometric invariant by introducing appropriate correction terms.
In \cite{Su1}, by using techniques from \cite{Su-Zhang08}, \cite{Vertman09}, \cite{Cheeger79} and \cite{Mueller78},
Su generalized the complex-valued analytic Ray--Singer torsion to the situation
in which $\partial_{+}M\not=\emptyset$ (or $\partial_{-}M\not=\emptyset $). Also in \cite{Su1}, Su
proved that in odd dimensions,
the complex-valued analytic torsion does depend neither on smooth variations of the Riemannian metric nor on smooth variations of
the bilinear form,
as long as these are compactly supported in the interior of $M$. This section ends with
Theorem
\ref{Theorem_Anomaly_formulas_complex_Analytic_Ray_Singer_torsion}, which gives formulas for the
variation of the complex-valued analytic Ray--Singer torsion with respect to smooth variations
of the metric and the
bilinear form.
In analogy with the results in \cite{Burghelea-Haller},
the
anomaly formulas for the complex-valued Ray--Singer torsion are obtained by using the results for the coefficients of the constant term in the heat trace asymptotic expansion
for the bilinear Laplacian obtained in Section
\ref{section_Heat_asymptotics and anomaly formulas}.

In the Appendix, see Section \ref{section_Appendix}, for the 
reader's convenience, we recall some formalism leading to the characteristic forms
appearing in the anomaly formulas stated in Proposition \ref{Proposition_infinitesimal_constant_term_in_asymptotic_expansion_Hermitian_absolute_boundary_conditions},
Proposition \ref{Proposition_infinitesimal_constant_term_in_asymptotic_expansion_Hermitian_case_relative_boundary_conditions}, Theorem 
\ref{Theorem infinitesimal_constant_terms_in_asymptotic_expansions_Hermitian_case_relative_and absolute_boundary_conditions}, 
Theorem \ref{Theorem infinitesimal_constant_terms_in_asymptotic_expansions_bilinear_case_relative_and absolute_boundary_conditions} and 
Theorem \ref{Theorem_Anomaly_formulas_complex_Analytic_Ray_Singer_torsion}.

The anomaly formulas given in
Theorem \ref{Theorem_Anomaly_formulas_complex_Analytic_Ray_Singer_torsion} generalize
the ones obtained by Burghelea and Haller in
the closed situation in \cite{Burghelea-Haller},
and also the ones in \cite{Su1} by Su in odd dimensions:
they do not longer require $g$ and $b$ to be constant in a neighborhood of the boundary and
both kind of boundary conditions are considered at the same time. 
\subsection*{Ackowledgements}
This paper has been written as part of a PhD thesis at the university of Vienna.
I am deeply grateful to my supervisor
Stefan Haller for useful discussions, his comments and important remarks on this work.

\section{Bilinear Laplacians and Hodge decomposition on bordisms}
\label{Section_Complex valued analytic torsion on Manifolds with Boundary}
 \subsection{Some background and notation}
 Let $(M,\partial_{+}M,\partial_{-}M)$ be a compact Riemannian bordism of dimension $m$. More precisely,
$M$ is a compact connected not necessarily orientable smooth manifold of dimension $m$ with Riemannian metric $g$,
whose boundary $\partial M$ is the disjoint union of two closed submanifolds, $\partial_{+}M$ and $\partial_{-}M$, and it inherits
the Riemannian metric from $M$. We do not require the metric to satisfy any condition near the boundary.
We denote by $TM$ and $T^{*}M$ (resp. $T\partial M$ and $T^{*}\partial M$) the tangent and cotangent bundle of $M$ (resp. $\partial M$) respectively.
 We denote by  $\varsigma_{\mathsf{in}}$  the \textit{geodesic unit inwards pointing normal vector field} on the boundary.
 Let $\Theta_{M}$ (resp. $\Theta_{\partial M}$) be the orientation bundle of $TM$ (resp. $T \partial M$),
 considered as the flat real line bundle $\det (T^{*}M)\rightarrow M$ (resp. $\det (T^{*}\partial M)\rightarrow \partial M$)
 with transition functions $\{\pm 1\}$, endowed with the unique flat connection specified by the
 de-Rham differential on (twisted) forms, see \cite[page 88]{Bott-Tu}.
 For the canonical embedding $i:\partial M\hookrightarrow M$, we write $\Theta_{M}|_{\partial M}:=i^{*}\Theta_{M}$ and,
 as real line bundles over $\partial M$,  $\Theta_{M}|_{\partial M}$
 and $\Theta_{\partial M}$ are identified as follows: over the boundary, a section $\beta$ of $\det(T^{*}\partial M)$ is identified
 with the section $-\varsigma^{\mathsf{in}}\wedge\beta$ of $\det(T^{*}M)|_{\partial M}$, where $\varsigma^{\mathsf{in}}:=g(\cdot,\varsigma_{\mathsf{in}})$ is 
 the 1-form dual to $\varsigma_{\mathsf{in}}$.
 For $T M$ and $T \partial M$, the corresponding
 Levi--Civit\`a connections are denoted by $\nabla$ and by $\nabla^{\partial}$ respectively.
 Recall the Hodge $\star$-operator
 $
 \star_{q}:=\star_{g,q}:\Omega^{q}(M)\rightarrow\Omega^{m-q}(M;\Theta_{M}),
 $
 i.e., the linear isomorphism
 defined by
 $
 \alpha\wedge\star\alpha^{\prime}=\langle\alpha,\alpha^{\prime}\rangle_{g}\mathsf{vol}_{g}(M),
 $
 for $\alpha,\alpha^{\prime}\in\Omega^{q}(M)$ and $0 \leq q\leq m$, where $\mathsf{vol}_{g}(M)\in\Omega^{m}(M;\Theta_{M})$ is the volume form of $M$.

 In this paper, we consider a flat complex vector bundle $E$ over $M$, with a flat connection $\nabla^{E}$, and denote by $\Omega(M;E)$ be the space
 of $E$-valued smooth differential forms on $M$, endowed with the de-Rahm differential $\mathop{d_{E}}:=d_{\nabla^{E}}$.
 Moreover,
 assume $E$ is endowed with a fiber-wise nondegenerate symmetric bilinear form $b$. We denote by $E^{\prime}$
 the flat complex vector bundle dual to $E$ with the induced flat connection $\nabla^{E^{\prime}}$ and bilinear form $b^{\prime}$
 dual to $\nabla^{E}$ and $b$ respectively. Recall that one is always able to fix a (positive definite) Hermitian structure on $E$ (in
 Section \ref{Section_Involutions, bilinear and Hermitian forms}, we choose for instance
 a Hermitian structure compatible with the nondegenerate symmetric bilinear form).
 By choosing a Hermitian structure on $E$ and using the Riemannian metric on $M$,
 consider the induced $L^{2}$-norm on $\Omega(M;E)$ and denote by $L^{2}(M;E)$ its $L^{2}$-completion.
 Recall that $L^{2}(M;E)$ is independent the chosen Hermitian and Riemannian structures.

\subsection{Generalized Laplacians on compact bordisms}
\label{subsection_Laplacians and Boundary conditions}
As a first step to define the complex-valued analytic torsion on a compact bordism,
we recall certain generalized Laplacians which were introduced in \cite{Burghelea-Haller} on closed manifolds.
The nondegenerate symmetric bilinear form $b$ on $E$ and the Riemannian metric $g$ on $M$ permit to define a nondegenerate symmetric bilinear form on
$\Omega(M;E)$ by
$$
\beta_{g,b}(v,w):=\int_{M}\Tr(v\wedge \star_{b}w)
$$ where
$\Tr:\Omega(M,E\otimes E^{\prime}\otimes\Theta_{M})\rightarrow\Omega(M;\Theta_{M})$ is the trace map,
induced by the canonical pairing between the bundles $E$ and $E^{\prime}$, and
the map
$$
\star_{b,q}:=\star_{q}\otimes b:\Omega^{q}(M;E)\rightarrow\Omega^{m-q}(M;E^{\prime}\otimes\Theta_{M})
$$
is defined by using the Hodge $\star$-operator
$\star_{q}$ and the isomorphism of vector bundles between $E$ and $E^{\prime}$,
specified by the bilinear form $b$, also denoted by the same symbol. Thus, one defines
$d^{\sharp}_{E,g,b,q}:\Omega^{q}(M;E)\rightarrow\Omega^{q-1}(M;E)$ by
\begin{equation}
 \label{formula_definition_d_transpose}
 d^{\sharp}_{E,g,b,q}:=
 (-1)^{q}{\star_{b,q-1}}^{-1}d_{E^{\prime}\otimes\Theta_{M},m-q}\star_{b,q},
\end{equation}
where ${\star_{b,q-1}}^{-1}$ is the inverse of $\star_{b,q-1}$ and
$d_{E^{\prime}\otimes\Theta_{M}}$ is the de-Rham differential on $\Omega(M;E^{\prime}\otimes\Theta_{M})$
induced by the dual connection on $E^{\prime}$. It can easily be checked that $d^{\sharp}_{E,g,b}$ is a codifferential on $\Omega(M;E)$. In this way, the operator
\begin{equation}
 \label{formula_definition_Laplacian}
 \Delta_{E,g,b,q}:=d_{E,q-1}d^{\sharp}_{E,g,b,q}+d^{\sharp}_{E,g,b,q+1}d_{E,q}:\Omega^{q}(M;E)\rightarrow \Omega^{q}(M;E),
\end{equation}
is an operator of Laplace type, or generalized Laplacian in the sense that its principal symbol is a scalar positive real number, i.e, $\Delta_{E,g,b}$ is elliptic. For simplicity,
the operator $\Delta_{E,g,b}$ in (\ref{formula_definition_Laplacian}) will be called the \emph{bilinear Laplacian}.
A straightforward use of Stokes' Theorem leads to the Green's formulas:
 \begin{eqnarray}
 \scriptstyle\beta_{g,b}(\mathop{d_{E}}v,w)-
 \beta_{g,b}(v,\mathop{d^{\sharp}_{E,g,b}}w)&=&\scriptstyle\int_{\partial M}i^{*}(\Tr(v\wedge \star_{b} w)),\nonumber\\
 \label{Lemma_Stokes_theorem_transpose_Greens_Formulas_non_self_adjoint_Laplacian}
 \scriptstyle\beta_{g,b}(\Delta_E v,w)-\beta_{g,b}(v,\Delta_E w)
            &=&\scriptstyle\int_{\partial M} i^{*}(\Tr(\mathop{d^{\sharp}_{E,g,b}} v\wedge\star_{b}w))
                -\int_{\partial M} i^{*}(\Tr(w\wedge\star_{b}\mathop{d_{E}}v))\\	
            &&\nonumber\scriptstyle-\int_{\partial M} i^{*}(\Tr(\mathop{d^{\sharp}_{E,g,b}} w\wedge\star_{b}v))
            +\int_{\partial M} i^{*}(\Tr(v\wedge\star_{b}\mathop{d_{E}} w)).
 \end{eqnarray}
 for $v,w\in \Omega(M;E)$.

\subsection{Boundary conditions}
\label{section_Absolute_and_relative_boundary_conditions_on_bordisms}
In order to study analytic and spectral properties of $\Delta_{E,g,b}$, we
impose elliptic boundary conditions. We denote by $i_{\pm}:\partial_{\pm}M\hookrightarrow M$ the canonical embedding of
$\partial_{\pm}M$ into $M$ respectively. For a form $w\in\Omega(M;E)$, we say that
$w$ satisfies \textit{relative
boundary conditions} on $\partial_{-}M$ if
$
i^{*}_{-}w=0$ and $i^{*}_{-}\mathop{d^{\sharp}_{E,g,b}}w=0$
and $w$ satisfies \textit{absolute boundary conditions} on $\partial_{+}M$ if
$i^{*}_{+}\star_{b}w=0$ and $i^{*}_{+}d^{\sharp}_{E^{\prime}\otimes\Theta_{M},g,b}\star_{b} w=0.$
The space of smooth forms  satisfying relative
boundary conditions on $\partial_{-}M$ \textit{and}
absolute boundary conditions on $\partial_{+}M$ is
\begin{equation}
\label{Differential_forms_satisfying_boundary_conditions_original}
\scriptstyle\Omega(M;E)|_{\mathcal{B}}:=
\left\{ w\in\Omega(M;E)\left|\begin{array}{cc}
    \scriptstyle i^{*}_{+}\star_{b} w=0, & \scriptstyle i^{*}_{-}w=0\\ \scriptstyle
    i^{*}_{+}d^{\sharp}_{E^{\prime}\otimes\Theta_{M},g,b}\star_{b} w=0, & \scriptstyle i^{*}_{-}\mathop{d^{\sharp}_{E,g,b}}w=0
    \end{array}\right\}.\right.
\end{equation}
For simplicity, a form satisfying boundary conditions in (\ref{Differential_forms_satisfying_boundary_conditions_original}) will be referred as satisfying
\textit{absolute/relative boundary conditions on $(M,\partial_{+}M,\partial_{-}M)$}.
The integrants on the right of
formulas in (\ref{Lemma_Stokes_theorem_transpose_Greens_Formulas_non_self_adjoint_Laplacian})
vanish, on forms in $\Omega(M;E)|_{\mathcal{B}}$.
The boundary conditions in (\ref{Differential_forms_satisfying_boundary_conditions_original}) are an example of \textit{mixed boundary conditions}, which provide elliptic boundary conditions
for operators of Laplace type, see \cite{Gilkey2}.

Now we describe boundary operators implementing the boundary conditions in (\ref{Differential_forms_satisfying_boundary_conditions_original}).
Consider
$E_{\pm}:=i^{*}_{\pm }E$ and for
$1\leq q\leq m$ define
\begin{equation}
\label{definition_Boundary_operator_on_smooth forms}
\begin{array}{lrcl}
\mathcal{B}_{E,g,b}:\phantom{a}&\Omega^{q}(M;E)&\longrightarrow&\Omega^{q-1}(\partial_{+}M;E_{+})
\oplus\Omega^{q}(\partial_{+}M;E_{+})\\
&&&\phantom{\Omega^{m-q}}\oplus\phantom{s}
\Omega^{q}(\partial_{-}M;E_{-})\oplus\Omega^{q-1}(\partial_{-}M;E_{-})\\
&w&\mapsto &({\mathcal{B}_{+}}w,
  {\mathcal{B}_{-}}w),
\end{array}
\end{equation}
where the operators
  \begin{equation}
  \label{definition_relative_Boundary_operator_on_smooth forms_0}
\begin{array}{c}
\begin{array}{lrcl}
 {\mathcal{B}_{-}}:&\Omega^{q}(M;E)&\longrightarrow&\Omega^{q}(\partial_{-}M;E_{-})\oplus\Omega^{q-1}(\partial_{-}M;E_{-})\\
		   &w&\mapsto&(\mathcal{B}_{-}^{0}w,\mathcal{B}_{-}^{1}w)\\
\end{array}
\\ \\
\begin{array}{lrcl}
 {\mathcal{B}_{+}}_{}:&\Omega^{q}(M;E)&\longrightarrow&\Omega^{q-1}(\partial_{+}M;E_{+})\oplus\Omega^{q}(\partial_{+}M;E_{+})\\
		   &w&\mapsto&(\mathcal{B}_{+}^{0}w,\mathcal{B}_{+}^{1}w)\\
\end{array}
\end{array}
\end{equation}
are respectively defined in terms of
  \begin{equation}
  \label{definition_relative_Boundary_operator_on_smooth forms}
  \begin{array}{lcl}
      \mathcal{B}_{-}^{0}w:=i^{*}_{-}w,&&
      \mathcal{B}_{-}^{1}w:=i^{*}_{-}\mathop{d^{\sharp}_{E,g,b}}w,\\&&\\
      \mathcal{B}_{+}^{0}w:=
  \left.\star_{b}^{\partial M}\right.^{-1}\left(i^{*}_{+}\star_{b}w\right),&&
      \mathcal{B}_{+}^{1}w:=\left.\star_{b}^{\partial M}\right.^{-1}\left(i^{*}_{+}d^{\sharp}_{E^{\prime}\otimes\Theta_{M},g,b^{\prime}}\star_{b}w\right).
  \end{array}
  \end{equation}
A form $w$ satisfies the boundary conditions, i.e.,  $w\in\Omega(M;E)|_{\mathcal{B}}$, if and only if
 $\mathcal{B}w=0$.

\begin{lem}
\label{Lemma_image_by_d_of_omega_2_and_symmetry_beta_boundary_conditions}
    For a subspace $\mathsf{X}\subseteq\Omega(M;E)$, denote by
$
\mathsf{X}|_{\mathfrak{B}}:=\{w\in\mathsf{X}|\mathfrak{B}w=0\}
$
the space of smooth forms in $\mathsf{X}$ which satisfy the boundary conditions specified by the vanishing of the operator
$\mathfrak{B}\in\{\mathcal{B}_{\pm}^{0},\mathcal{B}_{\pm}^{1},\mathcal{B}_{\pm},\mathcal{B}\}$. Set
\begin{equation}
 \label{Definition_subspace_space_of_smooth_forms_with_boundary_conditions}
\begin{array}{lcr}
 \mathsf{X}|_{\mathcal{B}^{0}}:=\mathsf{X}|_{\mathcal{B}_{-}^{0}}\cap\mathsf{X}|_{\mathcal{B}^{0}_{+}}.\\
 \end{array}
\end{equation}
Then the following assertions hold
    \begin{enumerate}
    \item[(a)]\label{remark_trivial_inclusions}
    $\mathsf{X}|_{\mathcal{B}}=\mathsf{X}|_{\mathcal{B}^{0}}\cap\mathsf{X}|_{\mathcal{B}_{-}^{1}}\cap\mathsf{X}|_{\mathcal{B}_{+}^{1}}$ and $\mathsf{X}|_{\mathcal{B}}
\subset \mathsf{X}|_{\mathcal{B}^{0}} \subset \mathsf{X}|_{\mathcal{B}_{-}^{0}},$
    \item[(b)]  
    $\mathop{d_{E}}(\Omega(M;E)|_{\mathcal{B}_{-}^{0}})\subset\Omega(M;E)|_{\mathcal{B}_{-}^{0}},$
    \item[(c)]
	    $\mathop{d_{E}}(\Omega(M;E)|_{\mathcal{B}})\subset\Omega(M;E)|_{\mathcal{B}^{0}}$ and
	    $\mathop{d^{\sharp}_{E,g,b}}(\Omega(M;E)|_{\mathcal{B}})\subset\Omega(M;E)|_{\mathcal{B}^{0}},$
    \item[(d)]
    If $v\in \Omega(M;E)|_{\mathcal{B}_{-}^{0}}$ and $w\in \Omega(M;E)|_{\mathcal{B}}$ then
    $\beta_{g,b}(\mathop{d_{E}}v,\mathop{d^{\sharp}_{E,g,b}}w)=0,$
    \item[(e)]
    If $v,w\in \Omega(M;E)|_{\mathcal{B}^{0}}$, then
    $\beta_{g,b}(\mathop{d_{E}}v,w)=\beta_{g,b}(v,\mathop{d^{\sharp}_{E,g,b}}w)                ,$
    \item[(f)]
    If $v,w\in \Omega(M;E)|_{\mathcal{B}}$, then  $\beta_{g,b}(\Delta_{E,g,b}v,w)=\beta_{g,b}(v,\Delta_{E,g,b}w).$
    \end{enumerate}
    \end{lem}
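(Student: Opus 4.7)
The plan is to handle the six assertions in order, with each relying on earlier ones, and with the main non-trivial ingredient being the intertwining identities between the Hodge star and the two differentials.

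Parts (a) and (b) are formal. For (a), unwinding the definitions of $\mathcal{B}$ and $\mathcal{B}^{0}$ shows that $\mathcal{B}w=0$ is equivalent to the simultaneous vanishing of $\mathcal{B}^{0}_{\pm}w$ and $\mathcal{B}^{1}_{\pm}w$, from which both the decomposition and the inclusions are immediate. Part (b) follows from naturality of the de Rham differential under restriction to the boundary, $i_{-}^{*}d_{E}=d_{E_{-}}i_{-}^{*}$.

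The key step is (c). The plan is to extract from (\ref{formula_definition_d_transpose}) the intertwining identity $\star_{b}d^{\sharp}_{E,g,b}=(-1)^{q}d_{E'\otimes\Theta_{M}}\star_{b}$ on $\Omega^{q}(M;E)$, and to apply the same construction to the dual bundle (using the canonical isomorphism $(E'\otimes\Theta_{M})'\otimes\Theta_{M}\cong E$ and $\star_{b'}\star_{b}=\pm\operatorname{id}$) to also obtain $\star_{b}d_{E}=\pm d^{\sharp}_{E'\otimes\Theta_{M},g,b'}\star_{b}$. Combining these with $i^{*}d=di^{*}$, each of the four conditions $\mathcal{B}^{0}_{\pm}d_{E}w=0$ and $\mathcal{B}^{0}_{\pm}d^{\sharp}_{E,g,b}w=0$ collapses, up to sign, to one of the four defining conditions on $w\in\Omega(M;E)|_{\mathcal{B}}$.

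Parts (d) and (e) are applications of the Green's formulas (\ref{Lemma_Stokes_theorem_transpose_Greens_Formulas_non_self_adjoint_Laplacian}) after splitting $\int_{\partial M}=\int_{\partial_{+}M}+\int_{\partial_{-}M}$. In (e), with $v,w\in\Omega(M;E)|_{\mathcal{B}^{0}}$, the integrand $i^{*}\Tr(v\wedge\star_{b}w)$ vanishes over $\partial_{-}M$ because $i_{-}^{*}v=0$, and over $\partial_{+}M$ because $i_{+}^{*}\star_{b}w=0$. For (d), I substitute $d^{\sharp}_{E,g,b}w$ in place of $w$ in the first Green's formula, use $(d^{\sharp}_{E,g,b})^{2}=0$, and observe via part (c) that $d^{\sharp}_{E,g,b}w\in\Omega(M;E)|_{\mathcal{B}^{0}}$, so that the resulting boundary integral vanishes by exactly the argument just used in (e).

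Finally, (f) follows by iterating (e). Symmetry of $\beta_{g,b}$ applied to (e) yields the dual identity $\beta_{g,b}(d^{\sharp}_{E,g,b}v,w)=\beta_{g,b}(v,d_{E}w)$ on $\Omega(M;E)|_{\mathcal{B}^{0}}$. For $v,w\in\Omega(M;E)|_{\mathcal{B}}$, part (c) places $d_{E}v$, $d^{\sharp}_{E,g,b}v$, $d_{E}w$, $d^{\sharp}_{E,g,b}w$ in $\Omega(M;E)|_{\mathcal{B}^{0}}$, so expanding both Laplacians as $d_{E}d^{\sharp}_{E,g,b}+d^{\sharp}_{E,g,b}d_{E}$ and applying (e) together with its dual collapses both $\beta_{g,b}(\Delta_{E,g,b}v,w)$ and $\beta_{g,b}(v,\Delta_{E,g,b}w)$ to the common expression $\beta_{g,b}(d^{\sharp}_{E,g,b}v,d^{\sharp}_{E,g,b}w)+\beta_{g,b}(d_{E}v,d_{E}w)$. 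The main obstacle throughout is bookkeeping in (c): keeping the sign conventions and the canonical identifications of dual bundles straight; everything else is a direct appeal to Green's formulas together with the boundary conditions.
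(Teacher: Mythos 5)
Your proposal is correct and follows essentially the same route as the paper, which simply asserts that all parts follow from the definitions of the boundary operators, the duality $\star_{b}d^{\sharp}_{E,g,b}=\pm d_{E^{\prime}\otimes\Theta_{M}}\star_{b}$ and $\star_{b}d_{E}=\pm d^{\sharp}_{E^{\prime}\otimes\Theta_{M},g,b^{\prime}}\star_{b}$, and the Green's formulas (\ref{Lemma_Stokes_theorem_transpose_Greens_Formulas_non_self_adjoint_Laplacian}); you have merely supplied the "straightforward manipulations" the paper leaves implicit. In particular your handling of (d) (substituting $d^{\sharp}_{E,g,b}w$ into the first Green's formula and using $(d^{\sharp}_{E,g,b})^{2}=0$ together with part (c)) and of (f) (reducing both sides to $\beta_{g,b}(d_{E}v,d_{E}w)+\beta_{g,b}(d^{\sharp}_{E,g,b}v,d^{\sharp}_{E,g,b}w)$) is exactly what the paper intends.
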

    \begin{proof}
    The first assertion is obvious. The remaining assertions follow from
     (\ref{Definition_subspace_space_of_smooth_forms_with_boundary_conditions}),
     (\ref{Differential_forms_satisfying_boundary_conditions_original}), the Green's formulas in
     (\ref{Lemma_Stokes_theorem_transpose_Greens_Formulas_non_self_adjoint_Laplacian}) and
     straightforward manipulations coming from the definition of the operators and spaces above.
    \end{proof}

\subsection{Boundary conditions and Poincar\'e duality}
\label{Boundary_conditions_and_Poincae_duality}
 Consider the Riemannian bordism $(M,\partial_{+}M,\partial_{-}M)$. The boundary value problem specified by the operator
 $\Delta_{E,g,b}$ acting on the space $\Omega(M;E)|_{\mathcal{B}}$ as defined by
 (\ref{Differential_forms_satisfying_boundary_conditions_original}),
 will be denoted
 by
 \begin{equation}
 \label{notation_Boundary_value_problem}
 [\Delta,\mathcal{B}]^{E,g,b}_{(M,\partial_{+}M,\partial_{-}M)}.
 \end{equation}
 Let us denote by $(M,\partial_{+}M,\partial_{-}M)^{\prime}:=(M,\partial_{-}M,\partial_{+}M)$ the \textit{dual bordism} to $(M,\partial_{+}M,\partial_{-}M)$.
 Then, we are interested in $[\Delta,\mathcal{B}]^{E^{\prime}\otimes\Theta,g,b^{\prime}}_{(M,\partial_{+}M,\partial_{-}M)^{\prime}}$
 the dual boundary value problem to (\ref{notation_Boundary_value_problem}),
 corresponding to the bilinear Laplacian $\Delta_{E^{\prime},g,b^{\prime}}$ acting on $E^{\prime}\otimes\Theta_{M}$-valued forms (where
 the flat complex vector bundle $E^{\prime}$ is endowed with the dual connection $\nabla^{E^{\prime}}$ and
 dual bilinear form $b^{\prime}$) under the boundary conditions specified by
 the vanishing of the boundary operator $\mathcal{B}^{\prime}$, i.e., the same operator from (\ref{definition_Boundary_operator_on_smooth forms}) but associated to
 $(M,\partial_{+}M,\partial_{-}M)^{\prime}$.
 The boundary value problem in (\ref{notation_Boundary_value_problem}) is naturally intertwined with its dual one
 by means of the Hodge $\star$-operator.  Indeed, by the very definition of these operators, we have
 the equality $$\star_{b}\mathop{d^{\sharp}_{E,g,b}}\mathop{d_{E}}=d_{E^{\prime}\otimes\Theta_{M}}d^{\sharp}_{E^{\prime}\otimes\Theta_{M},g,b^{\prime}}\star_{b}$$ so that
 $$\star_{b}\Delta_{E,g,b}=\Delta_{E^{\prime}\otimes\Theta_{M},g,b^{\prime}}\star_{b},$$ and
 $$w\in\Omega^{q}(M;E)|_{\mathcal{B}}\Longleftrightarrow\star_{b}w\in\Omega^{m-q}(M;E^{\prime}\otimes\Theta_{M})|_{\mathcal{B}^{\prime}}.
 $$
 That is,  the Hodge-$\star_{b}$-operator intertwines the roles of $\partial_{+}M$ and $\partial_{-}M$ in
 (\ref{notation_Boundary_value_problem}) and its dual.

 As a special case,
 if $\partial_{+}M=\partial M$ and
 $\partial_{-}M=\emptyset$ (resp. $\partial_{+}M=\emptyset$ and $\partial_{-}M=\partial M$),
 then
 $[\Delta,\mathcal{B}]^{E,g,b}_{(M,\partial M,\emptyset)}$,
 (resp. $[\Delta,\mathcal{B}]^{E,g,b}_{(M,\emptyset,\partial M)}$)
 is the boundary value problem where absolute (resp. relative) boundary conditions \textit{only} are imposed on
 $\partial M$.

\subsection{Hermitian boundary value problems}
We recall some facts for the Hermitian situation.
By using a Hermitian structure $h$ on $E$,
instead of the bilinear form $b$, all over in the considerations
above, one has
$\ll v,w\gg_{g,h}:=\int_{M}\Tr(v\wedge \star_{h}w)$ a Hermitian product on $\Omega(M;E)$, where $\star_{h}$ is in this case a fiber-wise complex anti-linear isomorphism induced
by $h$ and $\star_{g}$. Then, associated to this data, one considers  a differential $d_{E}$, a codifferential $d^{*}_{E,g,h}$ and a Laplacian
$$\Delta_{E,g,h}:=d_{E}d^{*}_{E,g,h}+d_{E,g,h}^{*}d_{E}:\Omega(M;E)\rightarrow\Omega(M;E),$$ which is formally selfadjoint with respect to  $\ll v,w\gg_{g,h}$,
under absolute/relative boundary conditions on $(M,\partial_{+}M,\partial_{-}M)$.
Let
$\Omega(M;E)|^{h}_{\mathcal{B}}$ be the space of $E$-valued smooth forms satisfying absolute/relative boundary conditions on $(M,\partial_{+}M,\partial_{-}M)$ defined as in
(\ref{Differential_forms_satisfying_boundary_conditions_original}) but using instead the Hermitian form $h$. In order to
distinguish this problem from the bilinear one, we refer to it as the \emph{Hermitian boundary value problem}.

The Hermitian boundary value problem
is an elliptic boundary value problem, see \cite{Gilkey1} and \cite{Gilkey2}. This permits one to consider
$\Delta_{E,g,h}$, as an unbounded operator in the $L^{2}$-norm and extend it
to a selfadjoint operator with domain of
definition being the $H_{2}$-Sobolev closure
of $\Omega(M;E)|^{h}_{\mathcal{B}}$; see \cite{Lueck}, \cite{Cheeger79}, \cite{Mueller78}, \cite{Gilkey1} and \cite{Gilkey2}.
In particular, in this Hermitian setting, there are well-known Hodge-decomposition results.
For instance, if $\mathcal{H}^{q}_{\Delta_{\mathcal{B}}}(M;E)$ is the space $\ker\left(\Delta_{E,g,h}\right)\cap\Omega^{q}(M;E)|^{h}_{\mathcal{B}}$
of $q$-\textit{Harmonic forms} satisfying boundary conditions, then
\cite[Theorem 1.10]{Lueck} (see also \cite[page 239]{Mueller78}) states that for each $v\in \Omega^{q}(M;E)|^{h}_{\mathcal{B}^{0}}$, there exist unique
$v_{0}\in\mathcal{H}^{q}_{\Delta_{\mathcal{B}}}(M;E)$, $
v_{1}\in \mathop{d_{E}}(\Omega^{q-1}(M;E)|^{h}_{\mathcal{B}^{0}})$ and $v_{2}\in d^{*}_{E,g,h}(\Omega^{q+1}(M;E)|^{h}_{\mathcal{B}^{0}})
$ such that
$
v=v_{0}+v_{1}+v_{2},
$
where we have used the notation suggested in (\ref{Definition_subspace_space_of_smooth_forms_with_boundary_conditions}) associated to $h$.
Moreover, the Hodge--De-Rham tells us that relative cohomology exactly coincides with the space of Harmonic forms of
the Hermitian Laplacian:
\begin{equation}
\label{equation_Hodge_Hermitian}
 \mathcal{H}^{q}_{\Delta_{\mathcal{B}}}(M;E)\cong H^{q}(M,\partial_{-} M;E).
\end{equation}
In the bilinear seeting, the isomorphism in (\ref{equation_Hodge_Hermitian}) does no longer holds, but we have instead Proposition \ref{Proposition_Absolute_relative_cohomology} below.
One uses the isomorphism in (\ref{equation_Hodge_Hermitian}) to define the Ray--Singer metric on manifolds with boundary, as a Hermitian metric on the determinant line in (relative) cohomology. This problem
has been studied by many authors, see for instance \cite{Ray-Singer}, \cite{Lueck}, \cite{Cheeger79}, \cite{Mueller78}, \cite{Dai-Fang}, \cite{Bruening-Ma} and \cite{Bruening-Ma2}.
In particular, we are interested in the work by Br\"uning and Ma in \cite{Bruening-Ma}, where the case $\partial_{-}M=\emptyset$ was studied.

\subsection{The spectrum of the bilinear Laplacian}
\label{section_The spectrum of the bilinear Laplacian on manifolds with boundary}
Consider the
boundary valued problem $[\Delta,\mathcal{B}]^{E,g,b}_{(M,\partial_{+}M,\partial_{-}M)}$.
Here we denote by $H_{s}(M;E)$ for $s\geq 0$, the corresponding Sobolev completions of $\Omega(M;E)$ with respect to a Hermitian metric on $E$.
By \cite[Section 20.1]{Hoermander} and \cite[Chapter 1]{Agranovich},
the operators $\Delta_{E,g,b}$ and $\mathcal{B}^{i}_{E,g,b}$ extend as a linear \textit{bounded} operators 
  \begin{equation}
   \label{equation_D_extends_bounded_on_H_s}
   \Delta_{E,g,b}:H_{2}(M;E)\rightarrow L^{2}(M;E)
  \end{equation}
  and
  \begin{equation}
   \label{equation_B_extends_bounded_on_H_s}
  \mathcal{B}^{i}_{E,g,b}:H_{2}(M,E)\rightarrow  H_{\frac{1}{2}}(\partial M;E|_{\partial M})\oplus H_{\frac{3}{2}-i}(\partial M,E|_{\partial M})
 \end{equation}
respectively and again these are independent on the chosen Hermitian structure.

By the
\textit{$L^{2}$-realization} of the bilinear Laplacian is understood the same operator in (\ref{equation_D_extends_bounded_on_H_s}) but considered as the
\textit{unbounded} operator in $L^{2}(M;E)$
\begin{equation}
\label{Unbounded_Laplacian_boundary_operator}
 \Delta_{\mathcal{B}}: \mathcal{D}(\Delta_{\mathcal{B}})\subset L^{2}(M;E) \rightarrow  L^{2}(M;E)
\end{equation}
 with domain of definition
 \begin{equation}
 \label{Unbounded_Laplacian_boundary_operator_DOMAIN}
 \mathcal{D}(\Delta_{\mathcal{B}}):= \overline{\Omega(M;E)|_{\mathcal{B}}}^{H_{2}}.
 \end{equation}
The boundary value problem $[\Delta,\mathcal{B}]^{E,g,b}_{(M,\partial_{+}M,\partial_{-}M)}$
is elliptic with respect to the cone $\C\backslash (0,\infty)$,  see \cite[Lemma 1.5.3]{Gilkey2}. Boundary ellipticity
guarantees the existence of elliptic estimates, see
\cite[Theorem 6.3.1]{Agranovich}  and \cite[Theorem 20.1.2]{Hoermander}. Then, elliptic estimates permit one to conclude that the
$L^{2}$-realization of the bilinear Laplacian
is a \textit{closed} unbounded operator in $L^{2}(M;E)$, which coincides with
the $L^{2}$-closure extension of
$$
\Delta_{E,g,b}:\Omega(M;E)|_{\mathcal{B}}\subset L^{2}(M;E)\rightarrow\Omega(M;E)\subset L^{2}(M;E),
$$
regarded as unbounded operator in $L^{2}(M;E)$.
\begin{lem}
 \label{Proposition_spectrum_Delta_L_2_invariance_of_domain_of_delta_contained_in_domain_of_d}
  Let $\Delta_{\mathcal{B}}$ be the unbounded operator with domain of definition $\mathcal{D}(\Delta_{\mathcal{B}})$ given in
  (\ref{Unbounded_Laplacian_boundary_operator_DOMAIN}).
  This operator is densely defined in $L^{2}(M;E)$, possesses a non-empty resolvent set,
  its resolvent is compact and its spectrum is discrete.
  More precisely, for every $\theta>0$, there exists $R>0$ such that $\mathbb{B}_{R}(0)$,
  the
  closed ball in $\C$ centered at 0 and radius $R$, contains at most a finite subset of
  $\mathsf{Spec}(\Delta_{\mathcal{B}})$ and the remaining part of the
  spectrum is entirely contained in the sector
  $$\Lambda_{R,\theta}:=\{\left. z\in\C\right|-\theta < \arg(z)
 <\theta\text{ and }|z|\geq R\}.$$
 Furthermore, for every $\lambda\not\in\Lambda_{R,\theta}$ large enough, there is $C>0$, for which
 $
 \|(\Delta_{\mathcal{B}}-\lambda)^{-1}\|_{L^{2}}\leq C/|\lambda|.
 $
\end{lem}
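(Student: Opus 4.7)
The plan is to exploit the parameter-ellipticity of $[\Delta,\mathcal{B}]^{E,g,b}_{(M,\partial_{+}M,\partial_{-}M)}$ in the Agmon/Agranovich sense along every ray outside $(0,\infty)$, and then apply standard Hilbert-space functional analysis. First, to see that $\Delta_{\mathcal{B}}$ is densely defined, observe that the space $C_{c}^{\infty}(M\setminus \partial M;E)$ of compactly supported smooth sections in the interior sits inside $\Omega(M;E)|_{\mathcal{B}}$ (all boundary operators $\mathcal{B}_{\pm}^{i}$ vanish automatically on forms with compact support away from $\partial M$), and this subspace is already dense in $L^{2}(M;E)$.

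Next, for any $\theta\in(0,\pi)$, the complement of the sector $\Lambda_{R,\theta}$ inside $\{|\lambda|\geq R\}$ lies in a closed subsector of $\C\setminus(0,\infty)$ bounded away from the positive real axis. By \cite[Lemma 1.5.3]{Gilkey2}, the boundary value problem is elliptic with parameter in this subsector, hence one obtains from \cite[Theorem 6.3.1]{Agranovich} (see also \cite[Theorem 20.1.2]{Hoermander}) a parametrix yielding, for all $\lambda$ in such a subsector with $|\lambda|$ sufficiently large, the a priori estimate
\begin{equation*}
|\lambda|\,\|u\|_{L^{2}} + \|u\|_{H_{2}} \leq C\bigl(\|(\Delta_{E,g,b}-\lambda)u\|_{L^{2}} + \|\mathcal{B}_{E,g,b}u\|_{\partial}\bigr),\qquad u\in H_{2}(M;E),
\end{equation*}
where $\|\cdot\|_{\partial}$ denotes the norm in the target of (\ref{equation_B_extends_bounded_on_H_s}). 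Restricting to $u\in\mathcal{D}(\Delta_{\mathcal{B}})$ kills the boundary term. Combined with the surjectivity that the parametrix construction produces modulo a compact remainder, one concludes that $\Delta_{\mathcal{B}}-\lambda$ is bijective from $\mathcal{D}(\Delta_{\mathcal{B}})$ to $L^{2}(M;E)$ for every such $\lambda$, and the two-sided inverse satisfies
\begin{equation*}
\|(\Delta_{\mathcal{B}}-\lambda)^{-1}\|_{L^{2}\to L^{2}} \leq C/|\lambda|.
\end{equation*}
In particular the resolvent set is nonempty, which proves the final estimate.

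Having one $\lambda_{0}$ in the resolvent set, the map $(\Delta_{\mathcal{B}}-\lambda_{0})^{-1}\colon L^{2}(M;E)\to\mathcal{D}(\Delta_{\mathcal{B}})\subset H_{2}(M;E)$ is bounded by the same elliptic estimate, and the Rellich embedding $H_{2}(M;E)\hookrightarrow L^{2}(M;E)$ is compact; hence the resolvent is a compact operator on $L^{2}(M;E)$. A closed densely defined operator with nonempty resolvent set and compact resolvent automatically has purely discrete spectrum consisting of eigenvalues of finite algebraic multiplicity (the spectrum of $(\Delta_{\mathcal{B}}-\lambda_{0})^{-1}$ is a countable set with $0$ as only accumulation point). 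The sector localization then follows at once: by the previous paragraph, every $\lambda\notin\Lambda_{R,\theta}$ with $|\lambda|\geq R$ (for $R$ large) lies in the resolvent set, so $\mathsf{Spec}(\Delta_{\mathcal{B}})\cap\{|\lambda|\geq R\}\subset\Lambda_{R,\theta}$, while $\mathsf{Spec}(\Delta_{\mathcal{B}})\cap\mathbb{B}_{R}(0)$ is finite by discreteness together with the absence of accumulation points away from infinity.

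The main obstacle is the transition from the formal cone-ellipticity (which has already been recorded) to a genuine two-sided inverse on the $L^{2}$-realization with the sharp Agmon bound $C/|\lambda|$; the point is that the parametrix initially only inverts $\Delta_{\mathcal{B}}-\lambda$ modulo compact operators, and one must use the a priori estimate together with Fredholm theory and the uniqueness coming from $|\lambda|\|u\|_{L^{2}}\leq C\|(\Delta_{\mathcal{B}}-\lambda)u\|_{L^{2}}$ to upgrade this to true bijectivity with the correct norm dependence on $\lambda$. Everything else then falls out of standard functional analysis on Hilbert spaces.
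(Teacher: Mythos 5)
Your argument is correct and follows essentially the same route as the paper: the paper's proof simply invokes ellipticity of the boundary value problem with respect to the cone $\C\backslash(0,\infty)$ and cites Grubb's general theory (Theorem 3.3.2, Corollary 3.3.3, Remark 3.3.4), while you unpack exactly the content of that citation — parameter-dependent a priori estimates, the parametrix/Fredholm upgrade to bijectivity with the Agmon bound $C/|\lambda|$, Rellich compactness of the resolvent, and the standard spectral theory of operators with compact resolvent. No gaps; the extra detail you supply is consistent with the sources the paper points to.
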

\begin{proof}
This follows from boundary ellipticity with respect to the conical set $\C\backslash (0,\infty)$. For a detailed discussion on this result (which holds also
in the more general setting of pseudodifferential boundary value problems for operators), we refer the reader to \cite[Theorem 3.3.2, Corollary 3.3.3 and Remark 3.3.4]{Grubb}
(see also \cite[Section 1.5]{Grubb}).
\end{proof}

\subsection{Generalized eigenspaces}
    By Lemma \ref{Proposition_spectrum_Delta_L_2_invariance_of_domain_of_delta_contained_in_domain_of_d}, $\mathsf{Spec}(\Delta_{\mathcal{B}})$ is discrete and then,
    for each $\lambda\in\mathsf{Spec}(\Delta_{\mathcal{B}})$, we choose
    $\gamma(\lambda)$ a closed counter-clock-wise oriented curve surrounding $\lambda$
    as the unique point of $\mathsf{Spec}(\Delta_{\mathcal{B}})$. Consider the corresponding Riesz or
    spectral projection:
    \begin{equation}
    \label{Riesz_Projection}
     \begin{array}{rrcl}
    \mathsf{P}_{\Delta_{\mathcal{B}}}(\lambda):
    &L^{2}(M;E)&\rightarrow&\mathcal{D}\left(\Delta_{\mathcal{B}}\right)\subset L^{2}(M;E),\\
    &w&\mapsto&-(2\pi i)^{-1}\int_{\gamma(\lambda)}(\Delta_{\mathcal{B}}-\mu)^{-1}w \mathop{d\mu}.
    \end{array}
    \end{equation}
    The integral above in (\ref{Riesz_Projection}) converges uniformly in the $L^{2}$-norm as the limit of Riemann sums, since
    the function $x\mapsto (\Delta_{\mathcal{B}}-x)^{-1}$ is analytic in a neighborhood of $\gamma(\lambda)$.
    The image of $\mathsf{P}_{\Delta_{\mathcal{B}}}(\lambda)$ in $L^{2}(M;E)$ is denoted by $$\Omega_{\Delta_{\mathcal{B}}}(M;E)(\lambda):=\mathsf{P}_{\Delta_{\mathcal{B}}}(\lambda)(L^{2}(M;E)).$$
    Since the resolvent of $\Delta_{\mathcal{B}}$ is compact, the operator $\mathsf{P}_{\Delta_{\mathcal{B}}}(\lambda)$ is
    bounded on $L^{2}(M;E)$, and  $\Omega_{\Delta_{\mathcal{B}}}(M;E)(\lambda)$ is of finite dimension,
    see \cite[Theorem 6.29]{Kato}.
    The image of the complementary projection to $\mathsf{P}_{\Delta_{\mathcal{B}}}(\lambda)$ on $L^{2}(M;E)$ is denoted by
    $$
    \mathsf{Im}(\mathsf{Id}-\mathsf{P}_{\Delta_{\mathcal{B}}}(\lambda)):=(\mathsf{Id}-\mathsf{P}_{\Delta_{\mathcal{B}}}(\lambda))(L^{2}(M;E)).
    $$
     Then the space $L^{2}(M;E)$ decomposes as a direct sum of Hilbert spaces compatible with the projections
     $\mathsf{P}_{\Delta_{\mathcal{B}}}(\lambda)$ and $(\mathsf{Id}-\mathsf{P}_{\Delta_{\mathcal{B}}}(\lambda))$. More precisely,  the
     following Lemma is a direct application of \cite[Theorem 6.17]{Kato}.
     \begin{lem}
     \label{Kato}
      Consider the unbounded operator $(\Delta_{\mathcal{B}},\mathcal{D}(\Delta_{\mathcal{B}}))$ from (\ref{Unbounded_Laplacian_boundary_operator}). For
      $\lambda\in \mathsf{Spec}(\Delta_{\mathcal{B}})$ consider the corresponding spectral projection
      $\mathsf{P}_{\Delta_{\mathcal{B}}}(\lambda)$. Then $\Delta_{\mathcal{B}}$ commutes with
      $\mathsf{P}_{\Delta_{\mathcal{B}}}(\lambda)$; that is,
      for $u\in\mathcal{D}(\Delta_{\mathcal{B}})$,
      we have $$\mathsf{P}_{\Delta_{\mathcal{B}}}(\lambda)u\in\mathcal{D}(\Delta_{\mathcal{B}})\text{ and }
      \mathsf{P}_{\Delta_{\mathcal{B}}}(\lambda)\Delta_{\mathcal{B}}u=\Delta_{\mathcal{B}}\mathsf{P}_{\Delta_{\mathcal{B}}}(\lambda)u.$$
      The space $L^{2}(M;E)$ decomposes as $$L^{2}(M;E)\cong\Omega_{\Delta_{\mathcal{B}}}(M;E)(\lambda)\oplus
      \mathsf{Im}(\mathsf{Id}-\mathsf{P}_{\Delta_{\mathcal{B}}}(\lambda)),$$
     such that
     $$\begin{array}{c}\mathsf{P}_{\Delta_{\mathcal{B}}}(\lambda)(\mathcal{D}(\Delta_{\mathcal{B}}))\subset\mathcal{D}(\Delta_{\mathcal{B}}),\\
     \Delta_{\mathcal{B}}(\Omega_{\Delta_{\mathcal{B}}}(M;E)(\lambda))\subset\Omega_{\Delta_{\mathcal{B}}}(M;E)(\lambda),\\
     \Delta_{\mathcal{B}}(\mathsf{Im}(\mathsf{Id}-\mathsf{P}_{\Delta_{\mathcal{B}}}(\lambda))\cap\mathcal{D}(\Delta_{\mathcal{B}}))\subset\mathsf{Im}(\mathsf{Id}-\mathsf{P}_{\Delta_{\mathcal{B}}}(\lambda)).\end{array}$$
     The operator
    \begin{equation}
    \label{Delta_restricted to an eigenspace is bounded}
    \Delta_{\mathcal{B}}|_{\Omega_{\Delta_{\mathcal{B}}}(M;E)(\lambda)}:
    \Omega_{\Delta_{\mathcal{B}}}(M;E)(\lambda)\rightarrow
    \Omega_{\Delta_{\mathcal{B}}}(M;E)(\lambda),
    \end{equation}
    is bounded on $\Omega_{\Delta_{\mathcal{B}}}(M;E)(\lambda)$,
    $\mathsf{Spec}(\Delta_{\mathcal{B}}|_{\Omega_{\Delta_{\mathcal{B}}}(M;E)(\lambda)})=\{\lambda\}$ and
    the operator
    \begin{equation}
    \label{equation_restriction_of_delta_on_complement}
    \scriptstyle(\Delta_{\mathcal{B}}-\lambda)|_{\mathcal{D}\left(\left.(\Delta_{\mathcal{B}}-\lambda)\right|_{\mathsf{Im}(\mathsf{Id}-\mathsf{P}_{\Delta_{\mathcal{B}}}(\lambda))}\right)}\phantom{:}:\phantom{:}
    \mathcal{D}\left(\left.(\Delta_{\mathcal{B}}-\lambda)\right|_{\mathsf{Im}(\mathsf{Id}-\mathsf{P}_{\Delta_{\mathcal{B}}}(\lambda))}\right)
    \rightarrow\mathsf{Im}(\mathsf{Id}-\mathsf{P}_{\Delta_{\mathcal{B}}}(\lambda)),
    \end{equation}
    with domain of definition
    $$\scriptstyle\mathcal{D}\left(\left.(\Delta_{\mathcal{B}}-\lambda)\right|_{\mathsf{Im}(\mathsf{Id}-\mathsf{P}_{\Delta_{\mathcal{B}}}(\lambda))}\right)
    \phantom{:}:=\phantom{:}\mathsf{Im}(\mathsf{Id}-\mathsf{P}_{\Delta_{\mathcal{B}}}(\lambda))\cap\mathcal{D}(\Delta_{\mathcal{B}})\subset L^{2}(M;E),$$
    is invertible, i.e.,  the spectrum of $\Delta_{\mathcal{B}}|_{\mathsf{Im}(\mathsf{Id}-\mathsf{P}_{\Delta_{\mathcal{B}}}(\lambda))}$ is
    exactly $\mathsf{Spec}\left(\Delta_{\mathcal{B}}\right)\backslash\{\lambda\}$.
    \end{lem}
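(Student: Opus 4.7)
The plan is to derive the lemma as a direct corollary of the general spectral decomposition theorem for closed operators with isolated spectral points (Kato, \emph{Perturbation Theory for Linear Operators}, Theorem~6.17 in Chapter~III), whose hypotheses are already guaranteed by Lemma~\ref{Proposition_spectrum_Delta_L_2_invariance_of_domain_of_delta_contained_in_domain_of_d}. Specifically, from that lemma one knows that $\Delta_{\mathcal{B}}$ is a closed, densely defined operator on $L^{2}(M;E)$ with non-empty resolvent set and discrete spectrum, so each $\lambda\in\mathsf{Spec}(\Delta_{\mathcal{B}})$ is an isolated point of the spectrum. Hence the contour $\gamma(\lambda)$ of \eqref{Riesz_Projection} is well-defined and the resulting spectral projection $\mathsf{P}_{\Delta_{\mathcal{B}}}(\lambda)$ is a bounded idempotent on $L^{2}(M;E)$ whose range is finite-dimensional (because the resolvent is compact, cf.\ Kato's Theorem~6.29).

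The commutation $\mathsf{P}_{\Delta_{\mathcal{B}}}(\lambda)\Delta_{\mathcal{B}} u=\Delta_{\mathcal{B}}\mathsf{P}_{\Delta_{\mathcal{B}}}(\lambda)u$ for $u\in\mathcal{D}(\Delta_{\mathcal{B}})$ I would establish by the standard resolvent manipulation: for every $\mu\in\gamma(\lambda)$ the identity $\Delta_{\mathcal{B}}(\Delta_{\mathcal{B}}-\mu)^{-1}u=(\Delta_{\mathcal{B}}-\mu)^{-1}\Delta_{\mathcal{B}}u$ holds on $\mathcal{D}(\Delta_{\mathcal{B}})$, and since $\Delta_{\mathcal{B}}$ is closed and the integrand of \eqref{Riesz_Projection} converges uniformly in $L^{2}$, one can interchange $\Delta_{\mathcal{B}}$ with the contour integral. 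This simultaneously yields $\mathsf{P}_{\Delta_{\mathcal{B}}}(\lambda)(\mathcal{D}(\Delta_{\mathcal{B}}))\subset\mathcal{D}(\Delta_{\mathcal{B}})$, and the commutation implies invariance of both $\Omega_{\Delta_{\mathcal{B}}}(M;E)(\lambda)$ and $\mathsf{Im}(\mathsf{Id}-\mathsf{P}_{\Delta_{\mathcal{B}}}(\lambda))$ under $\Delta_{\mathcal{B}}$, as stated in the display.

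The direct sum decomposition $L^{2}(M;E)=\Omega_{\Delta_{\mathcal{B}}}(M;E)(\lambda)\oplus\mathsf{Im}(\mathsf{Id}-\mathsf{P}_{\Delta_{\mathcal{B}}}(\lambda))$ is then automatic from the idempotency $\mathsf{P}_{\Delta_{\mathcal{B}}}(\lambda)^{2}=\mathsf{P}_{\Delta_{\mathcal{B}}}(\lambda)$, which in turn is a consequence of the resolvent identity applied along two nested contours around $\lambda$. For the restriction in \eqref{Delta_restricted to an eigenspace is bounded}, boundedness is automatic because the range is finite-dimensional; to identify $\mathsf{Spec}(\Delta_{\mathcal{B}}|_{\Omega_{\Delta_{\mathcal{B}}}(M;E)(\lambda)})=\{\lambda\}$, I would argue that for $\mu\notin\{\lambda\}$ lying in the resolvent set of $\Delta_{\mathcal{B}}$ the restricted resolvent is obtained from $(\Delta_{\mathcal{B}}-\mu)^{-1}\mathsf{P}_{\Delta_{\mathcal{B}}}(\lambda)$, so $\mu$ is in the resolvent of the restriction, while $(\Delta_{\mathcal{B}}-\lambda)$ is nilpotent on the generalized eigenspace (by Cauchy's integral formula applied to $(z-\lambda)^{k}$ around $\gamma(\lambda)$) and hence not invertible on a non-trivial finite-dimensional space.

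Finally, for the operator in \eqref{equation_restriction_of_delta_on_complement}, invertibility at $\mu=\lambda$ follows because the contour integral $-(2\pi i)^{-1}\int_{\gamma(\lambda)}(\mu-\lambda)^{-1}(\Delta_{\mathcal{B}}-\mu)^{-1}\,d\mu$ provides a bounded two-sided inverse on $\mathsf{Im}(\mathsf{Id}-\mathsf{P}_{\Delta_{\mathcal{B}}}(\lambda))$; combined with the first part, this yields $\mathsf{Spec}(\Delta_{\mathcal{B}}|_{\mathsf{Im}(\mathsf{Id}-\mathsf{P}_{\Delta_{\mathcal{B}}}(\lambda))})=\mathsf{Spec}(\Delta_{\mathcal{B}})\setminus\{\lambda\}$. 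I expect the only mildly delicate point to be the justification that $\Delta_{\mathcal{B}}$ commutes with the Riesz integral, since this requires invoking closedness together with uniform $L^{2}$-convergence of the Riemann sums approximating the contour integral; everything else is a bookkeeping exercise based on Kato's abstract statement.
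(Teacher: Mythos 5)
Your proposal is correct and follows essentially the same route as the paper: the lemma is presented there precisely as a direct application of \cite[Theorem 6.17]{Kato} (with \cite[Theorem 6.29]{Kato} supplying finite-dimensionality of the range via compactness of the resolvent, all guaranteed by Lemma \ref{Proposition_spectrum_Delta_L_2_invariance_of_domain_of_delta_contained_in_domain_of_d}), and you merely unpack the standard details of that theorem. One small sign nit: with the convention of (\ref{Riesz_Projection}), your candidate inverse $S:=-(2\pi i)^{-1}\int_{\gamma(\lambda)}(\mu-\lambda)^{-1}(\Delta_{\mathcal{B}}-\mu)^{-1}\,d\mu$ satisfies $(\Delta_{\mathcal{B}}-\lambda)S=-(\mathsf{Id}-\mathsf{P}_{\Delta_{\mathcal{B}}}(\lambda))$, so the two-sided inverse on the complementary subspace is $-S$; this does not affect the conclusion.
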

   The operator $\Delta_{\mathcal{B}}|_{\Omega_{\Delta_{\mathcal{B}}}(M;E)(\lambda)}$
   in (\ref{Delta_restricted to an eigenspace is bounded})
   being bounded, its spectrum containing $\lambda$ only and
   $\Omega_{\Delta_{\mathcal{B}}}(M;E)(\lambda)$ being of finite dimension,
   the operator $(\Delta_{\mathcal{B}}-\lambda)|_{\Omega_{\Delta_{\mathcal{B}}}(M;E)(\lambda)}$ is nilpotent.

   Commutativity of $\mathsf{P}_{\Delta_{\mathcal{B}}}(\lambda)$ with
   $\Delta_{\mathcal{B}}$ on its domain $\mathcal{D}(\Delta_{\mathcal{B}})$, invariance of $\Omega_{\Delta_{\mathcal{B}}}(M;E)(\lambda)$ under $\Delta_{\mathcal{B}}$,
   and the (iterated) use of elliptic estimates with Sobolev embedding, one has
   $\Omega_{\Delta_{\mathcal{B}}}(M;E)(\lambda)\subset\Omega(M;E)|_{\mathcal{B}}\subset\Omega(M;E)$. 
   Thus each $\lambda$-eigenspace can be described as
   $$
   \scriptstyle{\Omega_{\Delta_{\mathcal{B}}}(M;E)(\lambda)
     =\left\{w\in\Omega(M;E)|_{\mathcal{B}}\left|\begin{array}{c}
                                                 \scriptstyle{\left(\Delta_{E,g,b}-\lambda\right)^{n}w\text{ }\in\text{ }\Omega(M;E)|_{\mathcal{B}},\text{ }\forall n\geq 0,}\\
						 \scriptstyle{\exists N\in\N\text{ s.t. }\left(\Delta_{E,g,b}-\lambda\right)^{n}w=0,\text{ }\forall n\geq N}
                                                \end{array}
    \right.\right\}.}
    $$\begin{lem}
    \label{Proposition_finite_dimensional_generalized_eigen_spaces}
The space $\Omega_{\Delta_{\mathcal{B}}}(M;E)(\lambda)$ is invariant under $\mathop{d_{E}}$ and $\mathop{d^{\sharp}_{E,g,b}}$.
   \end{lem}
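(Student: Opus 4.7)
The plan is to verify, for $w\in\Omega_{\Delta_{\mathcal{B}}}(M;E)(\lambda)$, the two conditions characterising membership in this generalised eigenspace as displayed just before the statement: $(\Delta_{E,g,b}-\lambda)^{n}(d_{E}w)\in\Omega(M;E)|_{\mathcal{B}}$ for every $n\geq 0$, and $(\Delta_{E,g,b}-\lambda)^{N}(d_{E}w)=0$ for some $N\in\N$. The key observation is that $d_{E}$ commutes with $\Delta_{E,g,b}$ on smooth forms, since $(d_{E})^{2}=0=(d^{\sharp}_{E,g,b})^{2}$ force $\Delta_{E,g,b}d_{E}=d_{E}d^{\sharp}_{E,g,b}d_{E}=d_{E}\Delta_{E,g,b}$, hence $(\Delta_{E,g,b}-\lambda)^{n}d_{E}w=d_{E}(\Delta_{E,g,b}-\lambda)^{n}w$ for every $n$. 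Eventual nilpotence thus transfers from $w$ to $d_{E}w$ automatically, and invoking Lemma~\ref{Kato} (so that $(\Delta_{E,g,b}-\lambda)^{n}w\in\Omega_{\Delta_{\mathcal{B}}}(M;E)(\lambda)\subset\Omega(M;E)|_{\mathcal{B}}$ for every $n$) the problem reduces to the single claim: whenever $v\in\Omega(M;E)|_{\mathcal{B}}$ and $\Delta_{E,g,b}v\in\Omega(M;E)|_{\mathcal{B}}$, also $d_{E}v\in\Omega(M;E)|_{\mathcal{B}}$.

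I would verify this claim through the four boundary conditions of (\ref{Differential_forms_satisfying_boundary_conditions_original}) on $d_{E}v$. The $\mathcal{B}^{0}_{\pm}$-conditions on $d_{E}v$ are already supplied by Lemma~\ref{Lemma_image_by_d_of_omega_2_and_symmetry_beta_boundary_conditions}(c). For $\mathcal{B}^{1}_{-}(d_{E}v)=0$ one writes $d^{\sharp}_{E,g,b}d_{E}=\Delta_{E,g,b}-d_{E}d^{\sharp}_{E,g,b}$ and uses the commutation $i^{*}_{-}d_{E}=d_{E_{-}}i^{*}_{-}$ to get $\mathcal{B}^{1}_{-}(d_{E}v)=\mathcal{B}^{0}_{-}(\Delta_{E,g,b}v)-d_{E_{-}}\mathcal{B}^{1}_{-}v$; both summands vanish, the first using the extra hypothesis $\Delta_{E,g,b}v\in\Omega(M;E)|_{\mathcal{B}}$ (which is where it enters) and the second using $v\in\Omega(M;E)|_{\mathcal{B}}$. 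For $\mathcal{B}^{1}_{+}(d_{E}v)=0$ I would invoke the Hodge-type intertwining $\star_{b}d_{E}=\pm d^{\sharp}_{E'\otimes\Theta_{M},g,b'}\star_{b}$, derived by applying (\ref{formula_definition_d_transpose}) with $E$ replaced by $E'\otimes\Theta_{M}$ and combining with the sign identity $\star_{b'}\star_{b}=\pm\mathsf{Id}$ on $\Omega^{\bullet}(M;E)$; this rewrites $\mathcal{B}^{1}_{+}(d_{E}v)$ as a sign multiple of $(\star^{\partial M}_{b})^{-1}i^{*}_{+}(d^{\sharp}_{E'\otimes\Theta_{M},g,b'})^{2}\star_{b}v$, which vanishes by $(d^{\sharp})^{2}=0$.

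The invariance of $\Omega_{\Delta_{\mathcal{B}}}(M;E)(\lambda)$ under $d^{\sharp}_{E,g,b}$ is then obtained either by rerunning the argument above with the roles of $d_{E}$ and $d^{\sharp}_{E,g,b}$ (and of $\partial_{+}M$ and $\partial_{-}M$) interchanged, or more economically by transporting the statement just proved through the Poincar\'e-duality correspondence recalled in Section~\ref{Boundary_conditions_and_Poincae_duality}, which exchanges the two operators up to sign via $\star_{b}$. The main obstacle I foresee is the clean derivation of the intertwining $\star_{b}d_{E}=\pm d^{\sharp}_{E'\otimes\Theta_{M},g,b'}\star_{b}$ and the sign $\star_{b'}\star_{b}=\pm\mathsf{Id}$ from the definitions; once these are in place, the remaining verification is formal bookkeeping, with the hypothesis $\Delta_{E,g,b}v\in\Omega(M;E)|_{\mathcal{B}}$ used exactly once, in checking $\mathcal{B}^{1}_{-}(d_{E}v)=0$.
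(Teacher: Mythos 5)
Your proposal is correct and follows essentially the same route as the paper: reduce to checking the boundary conditions on $\mathop{d_{E}}w$, use that $\Delta_{E,g,b}w$ again lies in the generalized eigenspace (hence $i^{*}_{-}\Delta_{E,g,b}w=0$) together with $d^{\sharp}_{E,g,b}\mathop{d_{E}}=\Delta_{E,g,b}-\mathop{d_{E}}d^{\sharp}_{E,g,b}$ for the relative $\mathcal{B}^{1}_{-}$-condition, handle the absolute conditions by the $\star_{b}$-intertwining and $(d^{\sharp})^{2}=0$ (equivalently $d_{E}^{2}=0$ in the dual picture), and obtain the $d^{\sharp}_{E,g,b}$-invariance by duality. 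The only difference is that you make the commutation $(\Delta_{E,g,b}-\lambda)^{n}d_{E}=d_{E}(\Delta_{E,g,b}-\lambda)^{n}$ and the four-condition bookkeeping explicit, which the paper leaves implicit.
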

   \begin{proof}
    We show that $\Omega_{\Delta_{\mathcal{B}}}(M;E)(\lambda)$ is invariant
    under $\mathop{d_{E}}$ and $\mathop{d^{\sharp}_{E,g,b}}$.
    Since $\Omega_{\Delta_{\mathcal{B}}}(M;E)(\lambda)$ contains  smooth differential forms only, it suffices to show that
    $\mathop{d_{E}} w$ satisfies the boundary
    condition, whenever $w\in \Omega_{\Delta_{\mathcal{B}}}(M;E)(\lambda)$. On $\partial_{+}M$,
    the absolute part of the boundary, this immediately follows
    from $\mathop{d_{E}}^2=0$. Let us turn to $\partial_{-}M$, the relative part of the boundary.
    But, we know that the Riesz projections are well defined as bounded operators and they
    commute with the Laplacian on its domain of definition.
    That is, $\Delta_{E,g,b}w$
    lies in $\Omega_{\Delta_{\mathcal{B}}}(M;E)(\lambda)$ as well; in particular,
    it satisfies relative boundary conditions on $\partial_{-}M$, so that  $i^{*}_{-}(\Delta_{E,g,b}w)=0$. Together
    with $i^{*}_{-}\mathop{d^{\sharp}_{E,g,b}}w=0$, this implies $i^{*}_{-}\mathop{d^{\sharp}_{E,g,b}} \mathop{d_{E}} w=0$,
    hence $\mathop{d_{E}} w$ also satisfies relative boundary conditions. Finally, the corresponding statement for
    $\mathop{d^{\sharp}_{E,g,b}}$ follows by the duality between the absolute and relative boundary operators.
    \end{proof}

\subsection{Orthogonality and Hodge decomposition for smooth forms}
\label{section_Orthogonal decomposition}
We are interested in the space of smooth forms being in the complement image of $\mathsf{P}_{\mathcal{B}}(\lambda)$, which is denoted by
    \begin{equation}
    \label{equation_complement_image_smooth_forms}
     {\Omega_{\Delta_{\mathcal{B}}}(M;E)(\lambda)^{\mathsf{c}}}:=\Omega(M;E)\cap \mathsf{Im}(\mathsf{Id}-\mathsf{P}_{\Delta_{\mathcal{B}}}(\lambda)).
    \end{equation}
    Invertibility of the operator given in (\ref{equation_restriction_of_delta_on_complement}) and the existence
    of elliptic estimates imply that
    the restriction of $\left(\Delta_{\mathcal{B}}-\lambda\right)$ to the space
    $\Omega_{\Delta_{\mathcal{B}}}(M;E)(\lambda)^{\mathsf{c}}$ given in (\ref{equation_complement_image_smooth_forms}), satisfying boundary conditions
    provides, with the notation in display (\ref{Definition_subspace_space_of_smooth_forms_with_boundary_conditions}), the isomorphism
    \begin{equation}
    \label{Lemma_invertibility_of_Delta_in_smooth_subspaces}
    \left(\Delta_{\mathcal{B}}-\lambda\right)|_{{\Omega_{\Delta_{\mathcal{B}}}(M;E)(\lambda)^{\mathsf{c}}}|_{\mathcal{B}}}:
    {\Omega_{\Delta_{\mathcal{B}}}(M;E)(\lambda)^{\mathsf{c}}}|_{\mathcal{B}}\rightarrow{\Omega_{\Delta_{\mathcal{B}}}(M;E)(\lambda)^{\mathsf{c}}}.
    \end{equation}
    \begin{lem}
        \label{symmetry_of_beta_wrt_projection}
    For $\lambda\in \mathsf{Spec}(\Delta_{\mathcal{B}})$ and $v,w\in L^{2}(M;E)$, we have the formula
    $\beta_{g,b}(\mathsf{P}_{\Delta_{\mathcal{B}}}(\lambda)v,w)=\beta_{g,b}(v,\mathsf{P}_{\Delta_{\mathcal{B}}}(\lambda)w).$
    \end{lem}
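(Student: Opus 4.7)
The plan is to derive the identity from the Cauchy-integral representation of $\mathsf{P}_{\Delta_{\mathcal{B}}}(\lambda)$, reducing the claim to a $\beta_{g,b}$-symmetry property of the resolvent $(\Delta_{\mathcal{B}}-\mu)^{-1}$ at points $\mu$ on the surrounding curve $\gamma(\lambda)$. The latter, in turn, will be reduced to the smooth-level $\beta_{g,b}$-symmetry of $\Delta_{E,g,b}$ on $\Omega(M;E)|_{\mathcal{B}}$ already established in Lemma \ref{Lemma_image_by_d_of_omega_2_and_symmetry_beta_boundary_conditions}(f), via a density/approximation argument in the graph norm.

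First I would observe that $\beta_{g,b}$ extends by continuity to a jointly continuous bilinear form on $L^{2}(M;E)\times L^{2}(M;E)$: fixing a compatible Hermitian structure on $E$, the pointwise operator $\star_{b}$ is bounded, so Cauchy--Schwarz against the $L^{2}$-inner product $\ll\cdot,\cdot\gg_{g,h}$ gives $|\beta_{g,b}(v,w)|\leq C\|v\|_{L^{2}}\|w\|_{L^{2}}$. This allows integrals and $L^{2}$-limits to be commuted with $\beta_{g,b}$ in either argument.

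Next, for any $\mu$ in the resolvent set of $\Delta_{\mathcal{B}}$, I would prove
\begin{equation*}
\beta_{g,b}\bigl((\Delta_{\mathcal{B}}-\mu)^{-1}v,w\bigr)=\beta_{g,b}\bigl(v,(\Delta_{\mathcal{B}}-\mu)^{-1}w\bigr),\qquad v,w\in L^{2}(M;E).
\end{equation*}
Setting $u_{1}:=(\Delta_{\mathcal{B}}-\mu)^{-1}v$ and $u_{2}:=(\Delta_{\mathcal{B}}-\mu)^{-1}w$, both lie in $\mathcal{D}(\Delta_{\mathcal{B}})=\overline{\Omega(M;E)|_{\mathcal{B}}}^{H_{2}}$, so one picks approximating sequences $u_{i}^{(n)}\in\Omega(M;E)|_{\mathcal{B}}$ with $u_{i}^{(n)}\to u_{i}$ in $H_{2}$, hence $(\Delta_{E,g,b}-\mu)u_{i}^{(n)}\to(\Delta_{\mathcal{B}}-\mu)u_{i}$ in $L^{2}$. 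On the smooth level, Lemma \ref{Lemma_image_by_d_of_omega_2_and_symmetry_beta_boundary_conditions}(f) yields
\begin{equation*}
\beta_{g,b}\bigl((\Delta_{E,g,b}-\mu)u_{1}^{(n)},u_{2}^{(n)}\bigr)=\beta_{g,b}\bigl(u_{1}^{(n)},(\Delta_{E,g,b}-\mu)u_{2}^{(n)}\bigr),
\end{equation*}
and passing to the limit via $L^{2}$-continuity of $\beta_{g,b}$ gives $\beta_{g,b}(v,u_{2})=\beta_{g,b}(u_{1},w)$, as desired.

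Finally, since the Riemann sums defining the contour integral in (\ref{Riesz_Projection}) converge in $L^{2}$ uniformly on $\gamma(\lambda)$, continuity of $\beta_{g,b}$ lets one move $\beta_{g,b}(\cdot,w)$ inside the integral:
\begin{equation*}
\beta_{g,b}\bigl(\mathsf{P}_{\Delta_{\mathcal{B}}}(\lambda)v,w\bigr)=-\frac{1}{2\pi i}\int_{\gamma(\lambda)}\beta_{g,b}\bigl((\Delta_{\mathcal{B}}-\mu)^{-1}v,w\bigr)\,d\mu,
\end{equation*}
then apply the resolvent symmetry from the previous step, and pull the integral back into the second slot to recover $\beta_{g,b}(v,\mathsf{P}_{\Delta_{\mathcal{B}}}(\lambda)w)$.

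The main technical obstacle is the approximation step: one needs the symmetry property of $\Delta_{E,g,b}$, which a priori requires smoothness and the pointwise boundary conditions used in Green's formula (\ref{Lemma_Stokes_theorem_transpose_Greens_Formulas_non_self_adjoint_Laplacian}), to survive the passage to elements of the $H_{2}$-closure $\mathcal{D}(\Delta_{\mathcal{B}})$. This is handled cleanly precisely because $\Omega(M;E)|_{\mathcal{B}}$ is dense in $\mathcal{D}(\Delta_{\mathcal{B}})$ in the $H_{2}$-norm by the very definition (\ref{Unbounded_Laplacian_boundary_operator_DOMAIN}), so that $(\Delta_{E,g,b}-\mu)u_{i}^{(n)}$ converges to $(\Delta_{\mathcal{B}}-\mu)u_{i}$ in $L^{2}$, which is exactly what the $L^{2}$-continuity of $\beta_{g,b}$ requires.
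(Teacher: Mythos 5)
Your argument is correct, and its overall skeleton (pull $\beta_{g,b}$ through the contour integral, reduce to a $\beta_{g,b}$-symmetry of the resolvent, and derive that from the Green's-formula symmetry of Lemma \ref{Lemma_image_by_d_of_omega_2_and_symmetry_beta_boundary_conditions}(f)) is the same as the paper's. Where you genuinely diverge is in how the smooth-level identity is transported to the resolvent. The paper first reduces to smooth $v,w$ by $L^{2}$-density, and then invokes elliptic regularity -- in the form of the isomorphism (\ref{Lemma_invertibility_of_Delta_in_smooth_subspaces}) -- to assert that $(\Delta_{\mathcal{B}}-\mu)^{-1}v$ and $(\Delta_{\mathcal{B}}-\mu)^{-1}w$ are themselves smooth forms in $\Omega(M;E)|_{\mathcal{B}}$, so that Lemma \ref{Lemma_image_by_d_of_omega_2_and_symmetry_beta_boundary_conditions}(f) applies verbatim. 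You instead keep $v,w$ arbitrary in $L^{2}$ and approximate $u_{i}=(\Delta_{\mathcal{B}}-\mu)^{-1}v$, $(\Delta_{\mathcal{B}}-\mu)^{-1}w$ by elements of $\Omega(M;E)|_{\mathcal{B}}$ in the $H_{2}$-norm, which is available by the very definition (\ref{Unbounded_Laplacian_boundary_operator_DOMAIN}) of $\mathcal{D}(\Delta_{\mathcal{B}})$, and then pass to the limit using the boundedness of $\Delta_{E,g,b}:H_{2}\rightarrow L^{2}$ from (\ref{equation_D_extends_bounded_on_H_s}) together with the joint $L^{2}$-continuity of $\beta_{g,b}$. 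Your route buys a proof that never needs to know that the resolvent preserves smoothness (a fact the paper cites somewhat loosely, since (\ref{Lemma_invertibility_of_Delta_in_smooth_subspaces}) is stated for $\mu=\lambda\in\mathsf{Spec}(\Delta_{\mathcal{B}})$ rather than for general $\mu$ on $\gamma(\lambda)$); the price is the extra bookkeeping of the double approximating sequence, which you handle correctly via the estimate $|\beta_{g,b}(v,w)|\leq C\|v\|_{L^{2}}\|w\|_{L^{2}}$. In effect you are proving that $\Delta_{\mathcal{B}}$ is $\beta_{g,b}$-symmetric on all of $\mathcal{D}(\Delta_{\mathcal{B}})$, which is slightly stronger than what the paper records and is a perfectly sound basis for the lemma.
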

    \begin{proof} Since  $\beta_{g,b}$ continuously extends to a nondegenerate bilinear form on $L^{2}(M;E)$,
    it is enough to prove the statement on smooth forms. For  $v,w\in\Omega(M;E)$ and the definition of the spectral projection in (\ref{Riesz_Projection}), we have
    $$
      \begin{array}{lcr}{ }
    -2\pi \mathsf{i}\beta_{g,b}(\mathsf{P}_{\Delta_{\mathcal{B}}}(\lambda)v,w)&=&\beta_{g,b}\left(
    \int_{\gamma_{\lambda}}(\Delta_{\mathcal{B}}-\mu)^{-1}v \mathop{d\mu},w \right)\\
    &=&\int_{\gamma_{\lambda}}\beta_{g,b}\left((\Delta_{\mathcal{B}}-\mu)^{-1}v,w\right)\mathop{d\mu},
    \end{array}
    $$
    where the last equality above holds,
    since $\int_{\gamma_{\lambda}}$ converges uniformly in the
    $L^{2}$-norm.
    Since $\gamma_{\lambda}\cap\mathsf{Spec}(\Delta_{\mathcal{B}})=\emptyset$, we have
    $(\Delta_{\mathcal{B}}-\mu)^{-1}w\in\mathcal{D}(\Delta_{\mathcal{B}})$ so that
    $w=(\Delta_{\mathcal{B}}-\mu)(\Delta_{\mathcal{B}}-\mu)^{-1}w$ for each $\mu\in\gamma_{\lambda}$.
    Now, from the isomorphism in (\ref{Lemma_invertibility_of_Delta_in_smooth_subspaces}), both
    $(\Delta_{\mathcal{B}}-\mu)^{-1}v$ and $(\Delta_{\mathcal{B}}-\mu)^{-1}w$ belong in fact to
    ${\Omega_{\Delta_{\mathcal{B}}}(M;E)(\lambda)^{\mathsf{c}}}|_{\mathcal{B}}$,
    so we can apply Lemma \ref{Lemma_image_by_d_of_omega_2_and_symmetry_beta_boundary_conditions} and obtain
    $$
    \begin{array}{lll}
    { }\beta_{g,b}\left((\Delta_{\mathcal{B}}-\mu)^{-1}v,w\right)&{ }=&
    { }\beta_{g,b}((\Delta_{\mathcal{B}}-\mu)^{-1}v,(\Delta_{E,g,b}-\mu)(\Delta_{\mathcal{B}}-\mu)^{-1}w)\\
    &{ }=&{ }\beta_{g,b}\left((\Delta_{E,g,b}-\mu)(\Delta_{\mathcal{B}}-\mu)^{-1}v,(\Delta_{\mathcal{B}}-\mu)^{-1}w\right)\\
    &{ }=&{ }\beta_{g,b}\left(v,(\Delta_{\mathcal{B}}-\mu)^{-1}w\right);
    \end{array}
    $$
    that is,
    $
    \beta_{g,b}(\mathsf{P}_{\Delta_{\mathcal{B}}}(\lambda)v,w)=-(-2\pi \mathsf{i})^{-1}\int_{\gamma_{\lambda}}\beta_{g,b}(v,(\Delta_{\mathcal{B}}-\mu)^{-1}w)\mathop{d\mu}
    $
    and hence the equality
    $\beta_{g,b}(\mathsf{P}_{\Delta_{\mathcal{B}}}(\lambda)v,w)=\beta_{g,b}(v,\mathsf{P}_{\Delta_{\mathcal{B}}}(\lambda)w)$ holds.
    \end{proof}

    \begin{prop}
    \label{Proposition_orthogonal_decomposition_smooth_forms}
    There is a $\beta_{g,b}$-orthogonal direct sum decomposition:
    \begin{equation}
    \label{equation_decomposition_forms}
    \Omega(M;E)\cong\Omega_{\Delta_{\mathcal{B}}}(M;E)(\lambda)\oplus{\Omega_{\Delta_{\mathcal{B}}}(M;E)(\lambda)}^{\mathsf{c}}.
    \end{equation}
    If $\lambda,\mu\in \mathsf{Spec}(\Delta_{\mathcal{B}})$ with $\lambda\not=\mu$, then
    $
    \Omega_{\Delta_{\mathcal{B}}}(M;E)(\mu)\perp_{\beta}\Omega_{\Delta_{\mathcal{B}}}(M;E)(\lambda).
    $ In particular, $\beta_{g,b}$ restricts to each of these subspaces as a non degenerate symmetric bilinear form.
    Furthermore, with the notation in Section \ref{section_Absolute_and_relative_boundary_conditions_on_bordisms},
    there is a $\beta_{g,b}$-orthogonal direct sum decomposition
    \begin{equation}
    \label{equation_decomposition_forms_with_relative_boundary_conditions_on_minus}
    \Omega(M;E)|_{\mathcal{B}_{-}^{0}}\cong\Omega_{\Delta_{\mathcal{B}}}(M;E)(\lambda)
    \oplus{\Omega_{\Delta_{\mathcal{B}}}(M;E)(\lambda)}^{\mathsf{c}}|_{\mathcal{B}_{-}^{0}},
    \end{equation}
    which is invariant under $\mathop{d_{E}}$.
    \end{prop}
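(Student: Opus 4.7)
The plan is to lift the $L^{2}$-splitting produced by the spectral projection $\mathsf{P}_{\Delta_{\mathcal{B}}}(\lambda)$ from Lemma \ref{Kato} down to the smooth level. First I would observe that $\Omega_{\Delta_{\mathcal{B}}}(M;E)(\lambda)$ already consists of smooth forms, so for any $w\in\Omega(M;E)$ both $\mathsf{P}_{\Delta_{\mathcal{B}}}(\lambda)w$ and $(\mathsf{Id}-\mathsf{P}_{\Delta_{\mathcal{B}}}(\lambda))w$ are smooth; combined with the $L^{2}$ direct sum from Lemma \ref{Kato} this yields (\ref{equation_decomposition_forms}), with complement summand equal to ${\Omega_{\Delta_{\mathcal{B}}}(M;E)(\lambda)^{\mathsf{c}}}$ by the definition (\ref{equation_complement_image_smooth_forms}). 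The $\beta_{g,b}$-orthogonality of this splitting is then a one-line consequence of Lemma \ref{symmetry_of_beta_wrt_projection}: for $v\in\Omega_{\Delta_{\mathcal{B}}}(M;E)(\lambda)$ and $w\in\Omega_{\Delta_{\mathcal{B}}}(M;E)(\lambda)^{\mathsf{c}}$ one has $\beta_{g,b}(v,w)=\beta_{g,b}(\mathsf{P}_{\Delta_{\mathcal{B}}}(\lambda)v,w)=\beta_{g,b}(v,\mathsf{P}_{\Delta_{\mathcal{B}}}(\lambda)w)=0$.

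Orthogonality of distinct generalized eigenspaces follows by the same trick: choosing disjoint Riesz contours shows that $\mathsf{P}_{\Delta_{\mathcal{B}}}(\lambda)$ vanishes on $\Omega_{\Delta_{\mathcal{B}}}(M;E)(\mu)$ for $\mu\neq\lambda$, so $\Omega_{\Delta_{\mathcal{B}}}(M;E)(\mu)\subset\mathsf{Im}(\mathsf{Id}-\mathsf{P}_{\Delta_{\mathcal{B}}}(\lambda))$ and the symmetry argument above again gives the vanishing. Non-degeneracy of the restriction of $\beta_{g,b}$ to each eigenspace is then deduced from non-degeneracy of $\beta_{g,b}$ on $\Omega(M;E)$: any $v$ in the eigenspace pairing trivially with the eigenspace also pairs trivially with its complement by the orthogonality just established, hence with all of $\Omega(M;E)$, forcing $v=0$.

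For (\ref{equation_decomposition_forms_with_relative_boundary_conditions_on_minus}), given $w\in\Omega(M;E)|_{\mathcal{B}_{-}^{0}}$ I would split $w=w_{1}+w_{2}$ via (\ref{equation_decomposition_forms}); since $w_{1}\in\Omega_{\Delta_{\mathcal{B}}}(M;E)(\lambda)\subset\Omega(M;E)|_{\mathcal{B}}\subset\Omega(M;E)|_{\mathcal{B}_{-}^{0}}$, the difference $w_{2}$ lies in $\Omega(M;E)|_{\mathcal{B}_{-}^{0}}$ as well. Invariance of the first summand under $\mathop{d_{E}}$ is Lemma \ref{Proposition_finite_dimensional_generalized_eigen_spaces}, and stability of the $\mathcal{B}_{-}^{0}$-condition under $\mathop{d_{E}}$ is Lemma \ref{Lemma_image_by_d_of_omega_2_and_symmetry_beta_boundary_conditions}(b). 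The step I expect to be the main obstacle is showing that $\mathop{d_{E}}$ preserves the complement summand, which I would rather obtain from the $\beta_{g,b}$-structure than from any direct commutation of $\mathsf{P}_{\Delta_{\mathcal{B}}}(\lambda)$ with $\mathop{d_{E}}$. Given $w\in{\Omega_{\Delta_{\mathcal{B}}}(M;E)(\lambda)^{\mathsf{c}}}|_{\mathcal{B}_{-}^{0}}$ and $v\in\Omega_{\Delta_{\mathcal{B}}}(M;E)(\lambda)\subset\Omega(M;E)|_{\mathcal{B}}$, the Green's formula (\ref{Lemma_Stokes_theorem_transpose_Greens_Formulas_non_self_adjoint_Laplacian}) reduces $\beta_{g,b}(\mathop{d_{E}}w,v)-\beta_{g,b}(w,\mathop{d^{\sharp}_{E,g,b}}v)$ to the boundary integral $\int_{\partial M}i^{*}\Tr(w\wedge\star_{b}v)$, which vanishes on $\partial_{-}M$ because $i_{-}^{*}w=0$ and on $\partial_{+}M$ because $i_{+}^{*}\star_{b}v=0$. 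Since $\mathop{d^{\sharp}_{E,g,b}}v$ is again in $\Omega_{\Delta_{\mathcal{B}}}(M;E)(\lambda)$ by Lemma \ref{Proposition_finite_dimensional_generalized_eigen_spaces} and $w$ is $\beta_{g,b}$-orthogonal to this eigenspace, $\beta_{g,b}(\mathop{d_{E}}w,v)=0$ for every such $v$, and the non-degeneracy of $\beta_{g,b}$ on the eigenspace forces the eigenspace component of $\mathop{d_{E}}w$ to be zero.
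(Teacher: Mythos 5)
Your proposal is correct and follows essentially the same route as the paper: the $L^{2}$-splitting of Lemma \ref{Kato} restricted to smooth forms, Lemma \ref{symmetry_of_beta_wrt_projection} for the $\beta_{g,b}$-orthogonality, the inclusion $\Omega_{\Delta_{\mathcal{B}}}(M;E)(\lambda)\subset\Omega(M;E)|_{\mathcal{B}_{-}^{0}}$ for the second decomposition, and Green's formula combined with $\mathop{d^{\sharp}_{E,g,b}}$-invariance of the eigenspace and orthogonality to show $\mathop{d_{E}}$ preserves the complement. You merely spell out in more detail a few steps the paper leaves implicit (orthogonality of distinct eigenspaces, non-degeneracy of the restriction, and the Green's-formula computation).
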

    \begin{proof}
    Remark that
    $
    \Omega_{\Delta_{\mathcal{B}}}(M;E)(\lambda)=\mathsf{P}_{\Delta_{\mathcal{B}}}(\lambda)(\Omega(M;E)).
    $
    Therefore the decomposition in (\ref{equation_decomposition_forms}) follows from the direct sum decomposition of $L^{2}(M;E)$ stated in Lemma \ref{Kato}.
    We show that $\Omega_{\Delta_{\mathcal{B}}}(M;E)(\lambda)$ is $\beta_{g,b}$-orthogonal to ${\Omega_{\Delta_{\mathcal{B}}}(M;E)(\lambda)^{\mathsf{c}}}$, by
    taking $v\in\Omega_{\Delta_{\mathcal{B}}}(M;E)(\lambda)$ and
   $w\in{\Omega_{\Delta_{\mathcal{B}}}(M;E)(\lambda)^{\mathsf{c}}}$ and noticing that
    $$
    \beta_{g,b}(v,w)=\beta_{g,b}(\mathsf{P}_{\Delta_{\mathcal{B}}}(\lambda) v,w)
                    =\beta_{g,b}(v,\mathsf{P}_{\Delta_{\mathcal{B}}}(\lambda)w)
                    =0,
    $$
    where the second equality above follows from Lemma \ref{symmetry_of_beta_wrt_projection} and the
    last one is true because $w$  is in the image of the complementary projection of $\mathsf{P}_{\Delta_{\mathcal{B}}}(\lambda)$.
    Since $\Omega_{\Delta_{\mathcal{B}}}(M;E)(\lambda)$ is contained in the space $\Omega(M;E)|_{\mathcal{B}_{-}^{0}}$, the decomposition
    in (\ref{equation_decomposition_forms}) implies directness and $\beta_{g,b}$-orthogonality for the one in
    (\ref{equation_decomposition_forms_with_relative_boundary_conditions_on_minus}).
    By Lemma
    \ref{Proposition_finite_dimensional_generalized_eigen_spaces},
    $\Omega_{\Delta_{\mathcal{B}}}(M;E)(\lambda)$ is invariant under both $\mathop{d_{E}}$ and $\mathop{d^{\sharp}_{E,g,b}}$.
    But, the space
    $\mathop{d_{E}}({\Omega_{\Delta_{\mathcal{B}}}(M;E)(\lambda)}^{\mathsf{c}}|_{\mathcal{B}_{-}^{0}})$ is contained
    in ${\Omega_{\Delta_{\mathcal{B}}}(M;E)(\lambda)}^{\mathsf{c}}|_{\mathcal{B}_{-}^{0}}$ as well,
    as it can be checked by using the Green's formulas from Lemma
    \ref{Lemma_Stokes_theorem_transpose_Greens_Formulas_non_self_adjoint_Laplacian}, that
    $\mathop{d^{\sharp}_{E,g,b}}$ leaves invariant
    $\Omega_{\Delta_{\mathcal{B}}}(M;E)(\lambda)$ and $\beta_{g,b}$-orthogonality
    of (\ref{equation_decomposition_forms}).
    \end{proof}

    \begin{cor}
    \label{Corollary_Beta_orthogonality_d_d_sharp}
    For $\lambda\in\mathsf{Spec}(\Delta_{\mathcal{B}})$ and with the notation in (\ref{Definition_subspace_space_of_smooth_forms_with_boundary_conditions}),
    consider the space ${\Omega_{\Delta_{\mathcal{B}}}(M;E)(\lambda)^{\mathsf{c}}}|_{\mathcal{B}^{0}}$. Then,
    the spaces
    $\mathop{d_{E}}({\Omega_{\Delta_{\mathcal{B}}}(M;E)(\lambda)^{\mathsf{c}}}|_{\mathcal{B}^{0}})$
    and
    $\mathop{d^{\sharp}_{E,g,b}}({\Omega_{\Delta_{\mathcal{B}}}(M;E)(\lambda)^{\mathsf{c}}}|_{\mathcal{B}^{0}})$ are
    $\beta_{g,b}$-orthogonal to  $\Omega_{\Delta_{\mathcal{B}}}(M;E)(\lambda).$
    \end{cor}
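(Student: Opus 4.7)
The plan is to reduce both orthogonality statements to the $\beta_{g,b}$-orthogonality between $\Omega_{\Delta_{\mathcal{B}}}(M;E)(\lambda)$ and its complement $\Omega_{\Delta_{\mathcal{B}}}(M;E)(\lambda)^{\mathsf{c}}$, which has already been established in Proposition \ref{Proposition_orthogonal_decomposition_smooth_forms}. The two additional ingredients I need are the integration-by-parts identity of Lemma \ref{Lemma_image_by_d_of_omega_2_and_symmetry_beta_boundary_conditions}(e), which lets me move $\mathop{d_{E}}$ across $\beta_{g,b}$ (producing $\mathop{d^{\sharp}_{E,g,b}}$ on the other side) whenever both arguments lie in $\Omega(M;E)|_{\mathcal{B}^{0}}$, and the invariance of the generalized eigenspace under $\mathop{d_{E}}$ and $\mathop{d^{\sharp}_{E,g,b}}$ from Lemma \ref{Proposition_finite_dimensional_generalized_eigen_spaces}.

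Concretely, I fix $v\in\Omega_{\Delta_{\mathcal{B}}}(M;E)(\lambda)$ and $w\in\Omega_{\Delta_{\mathcal{B}}}(M;E)(\lambda)^{\mathsf{c}}|_{\mathcal{B}^{0}}$. By Lemma \ref{Lemma_image_by_d_of_omega_2_and_symmetry_beta_boundary_conditions}(a) one has $v\in\Omega(M;E)|_{\mathcal{B}}\subset\Omega(M;E)|_{\mathcal{B}^{0}}$, so both $v$ and $w$ satisfy the boundary condition $\mathcal{B}^{0}$ required for part (e). Applying (e) directly, and also applying it after swapping the roles of $v$ and $w$ combined with the symmetry of $\beta_{g,b}$, I get the two identities
\begin{equation*}
\beta_{g,b}(v,\mathop{d^{\sharp}_{E,g,b}} w)=\beta_{g,b}(\mathop{d_{E}} v,w),\qquad \beta_{g,b}(v,\mathop{d_{E}} w)=\beta_{g,b}(\mathop{d^{\sharp}_{E,g,b}} v,w).
\end{equation*}

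By Lemma \ref{Proposition_finite_dimensional_generalized_eigen_spaces} both $\mathop{d_{E}} v$ and $\mathop{d^{\sharp}_{E,g,b}} v$ still lie in $\Omega_{\Delta_{\mathcal{B}}}(M;E)(\lambda)$, whereas $w\in\Omega_{\Delta_{\mathcal{B}}}(M;E)(\lambda)^{\mathsf{c}}$. The $\beta_{g,b}$-orthogonal decomposition of Proposition \ref{Proposition_orthogonal_decomposition_smooth_forms} then forces both right-hand sides above to vanish, which is exactly the asserted orthogonality.

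No real obstacle arises in this plan; everything analytic (elliptic regularity, construction of the spectral projector, invariance of the eigenspace under the differentials, symmetry of $\beta_{g,b}$ under the projector) has already been absorbed into the preceding lemmas and proposition. The only delicate point to track is that the weaker hypothesis $\mathcal{B}^{0}$ imposed on $w$ is exactly what is needed for the Green-type identity in Lemma \ref{Lemma_image_by_d_of_omega_2_and_symmetry_beta_boundary_conditions}(e) to hold without boundary contributions, since $v$ automatically satisfies the full boundary condition $\mathcal{B}$.
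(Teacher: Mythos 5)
Your proof is correct and follows essentially the same route as the paper: apply the integration-by-parts identity of Lemma \ref{Lemma_image_by_d_of_omega_2_and_symmetry_beta_boundary_conditions}(e) (using that the generalized eigenspace lies in $\Omega(M;E)|_{\mathcal{B}}\subset\Omega(M;E)|_{\mathcal{B}^{0}}$), then invoke the invariance of $\Omega_{\Delta_{\mathcal{B}}}(M;E)(\lambda)$ under $\mathop{d_{E}}$ and $\mathop{d^{\sharp}_{E,g,b}}$ from Lemma \ref{Proposition_finite_dimensional_generalized_eigen_spaces} together with the orthogonality of Proposition \ref{Proposition_orthogonal_decomposition_smooth_forms}. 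No gaps.
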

    \begin{proof}
    If $u\in\Omega_{\Delta_{\mathcal{B}}}(M;E)(\lambda)$ and $v\in {\Omega_{\Delta_{\mathcal{B}}}(M;E)(\lambda)^{\mathsf{c}}}|_{\mathcal{B}^{0}}$,
    then, by using Lemma \ref{Lemma_image_by_d_of_omega_2_and_symmetry_beta_boundary_conditions}, invariance of
    $\Omega_{\Delta_{\mathcal{B}}}(M;E)(\lambda)$ under $\mathop{d^{\sharp}_{E,g,b}}$ (see also
    Lemma \ref{Proposition_finite_dimensional_generalized_eigen_spaces} and
    Proposition \ref{Proposition_orthogonal_decomposition_smooth_forms} above), we have
    $\beta_{g,b}(u,\mathop{d_{E}}v)=\beta_{g,b}(\mathop{d^{\sharp}_{E,g,b}}u,v)=0.$
    The proof for $\mathop{d^{\sharp}_{E,g,b}}$ is analog.
     \end{proof}

\begin{cor}(Hodge decomposition)
    \label{Corollary_Hodge_decomposition}
    We have the $\beta_{g,b}$-orthogonal decomposition
    $
    \label{Theorem_Hodge_decomposition_formula_1}
    \Omega(M;E)\cong\Omega_{\Delta_{\mathcal{B}}}(M;E)(0)
    \oplus\Delta_{E,g,b}(\left.{\Omega_{\Delta_{\mathcal{B}}}(M;E)(0)^{\mathsf{c}}}\right.|_{\mathcal{B}}).
    $
    \end{cor}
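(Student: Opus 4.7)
The plan is to deduce this as an essentially immediate consequence of what has already been established in Proposition \ref{Proposition_orthogonal_decomposition_smooth_forms} and the isomorphism displayed in (\ref{Lemma_invertibility_of_Delta_in_smooth_subspaces}), specialized to the eigenvalue $\lambda=0$.

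First, I would apply Proposition \ref{Proposition_orthogonal_decomposition_smooth_forms} at $\lambda=0$, which provides the $\beta_{g,b}$-orthogonal direct sum decomposition
$$
\Omega(M;E)\cong\Omega_{\Delta_{\mathcal{B}}}(M;E)(0)\oplus\Omega_{\Delta_{\mathcal{B}}}(M;E)(0)^{\mathsf{c}}.
$$
So the only thing left to verify is that the second summand on the right equals the image $\Delta_{E,g,b}(\Omega_{\Delta_{\mathcal{B}}}(M;E)(0)^{\mathsf{c}}|_{\mathcal{B}})$.

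Next, I would invoke (\ref{Lemma_invertibility_of_Delta_in_smooth_subspaces}) with $\lambda=0$. That display asserts that the restriction of $\Delta_{\mathcal{B}}-\lambda$ to the subspace ${\Omega_{\Delta_{\mathcal{B}}}(M;E)(\lambda)^{\mathsf{c}}}|_{\mathcal{B}}$ of smooth forms satisfying the mixed boundary conditions is an \emph{isomorphism} onto ${\Omega_{\Delta_{\mathcal{B}}}(M;E)(\lambda)^{\mathsf{c}}}$. Setting $\lambda=0$ yields
$$
\Delta_{E,g,b}\bigl(\Omega_{\Delta_{\mathcal{B}}}(M;E)(0)^{\mathsf{c}}|_{\mathcal{B}}\bigr)=\Omega_{\Delta_{\mathcal{B}}}(M;E)(0)^{\mathsf{c}},
$$
and substitution into the orthogonal decomposition above delivers the claimed identity, with $\beta_{g,b}$-orthogonality inherited from that of Proposition \ref{Proposition_orthogonal_decomposition_smooth_forms}.

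Since both ingredients are already in hand, there is no real obstacle here; the only subtlety worth double-checking is that the isomorphism (\ref{Lemma_invertibility_of_Delta_in_smooth_subspaces}) is indeed applicable at $\lambda=0$, which it is because $0$ lies in the discrete spectrum (or not at all, in which case $\Omega_{\Delta_{\mathcal{B}}}(M;E)(0)=0$ and the complement is all of $\Omega(M;E)$, and the argument still works verbatim). Thus the proof reduces to citing Proposition \ref{Proposition_orthogonal_decomposition_smooth_forms} and (\ref{Lemma_invertibility_of_Delta_in_smooth_subspaces}) and combining them.
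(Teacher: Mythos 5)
Your proposal is correct and follows exactly the paper's own argument: the paper likewise deduces the corollary by combining the $\beta_{g,b}$-orthogonal decomposition of Proposition \ref{Proposition_orthogonal_decomposition_smooth_forms} at $\lambda=0$ with the isomorphism in (\ref{Lemma_invertibility_of_Delta_in_smooth_subspaces}). Your spelling out of why the complement equals $\Delta_{E,g,b}(\Omega_{\Delta_{\mathcal{B}}}(M;E)(0)^{\mathsf{c}}|_{\mathcal{B}})$ is just a more explicit rendering of the same two-step citation.
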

    \begin{proof}
     This follows from Proposition
     \ref{Proposition_orthogonal_decomposition_smooth_forms} and
     the isomorphism in (\ref{Lemma_invertibility_of_Delta_in_smooth_subspaces}).
    \end{proof}
Compare the folloring result with
\cite[Proposition 2.1]{Burghelea_Friedlander_Kappeler}.
\begin{prop}
\label{Proposition_from_Hodge_decomposition_2}
The following are $\beta_{g,b}$-orthogonal direct sum decompositions.
\begin{eqnarray}
		\nonumber \Omega(M;E)&\cong&\Omega_{\Delta_{\mathcal{B}}}(M;E)(0)  \oplus  \mathop{d_{E}}(\mathop{d^{\sharp}_{E,g,b}}(\left.{\Omega_{\Delta_{\mathcal{B}}}(M;E)(0)^{\mathsf{c}}}\right.|_{\mathcal{B}}))\\
		 &&\label{Proposition_from_Hodge_decomposition_2_Decomposition_of_forms_from_Theorem_Hodge_decomposition_1}\phantom{\Omega_{\Delta_{\mathcal{B}}}(M;E)(0)}\oplus
                \mathop{d^{\sharp}_{E,g,b}}(\mathop{d_{E}}(\left.{\Omega_{\Delta_{\mathcal{B}}}(M;E)(0)^{\mathsf{c}}}\right.|_{\mathcal{B}}),\\
		\nonumber \Omega(M;E)|_{\mathcal{B}_{-}^{0}}&\cong&\Omega_{\Delta_{\mathcal{B}}}(M;E)(0)\oplus \mathop{d_{E}}(\left.{\Omega_{\Delta_{\mathcal{B}}}(M;E)(0)^{\mathsf{c}}}\right.|_{\mathcal{B}_{-}^{0}})\\
                    &&\label{Proposition_from_Hodge_decomposition_2_Decomposition_of_forms_in_domain_of_d_E}\phantom{\Omega_{\Delta_{\mathcal{B}}}(M;E)(0)}\oplus
                \mathop{d^{\sharp}_{E,g,b}}(\left.{\Omega_{\Delta_{\mathcal{B}}}(M;E)(0)^{\mathsf{c}}}\right.|_{\mathcal{B}}),\\
		\nonumber\Omega(M;E)|_{\mathcal{B}^{0}}&\cong&\Omega_{\Delta_{\mathcal{B}}}(M;E)(0)\oplus \mathop{d_{E}}(\left.{\Omega_{\Delta_{\mathcal{B}}}(M;E)(0)^{\mathsf{c}}}\right.|_{\mathcal{B}})\\
                    &&\label{Proposition_from_Hodge_decomposition_2_Decomposition_of_forms_satisfying_1}\phantom{\Omega_{\Delta_{\mathcal{B}}}(M;E)(0)}\oplus
                \mathop{d^{\sharp}_{E,g,b}}(\left.{\Omega_{\Delta_{\mathcal{B}}}(M;E)(0)^{\mathsf{c}}}\right.|_{\mathcal{B}}).
\end{eqnarray}
Moreover, the restriction of $\beta_{g,b}$ to each of the spaces appearing above is nondegenerate.

\end{prop}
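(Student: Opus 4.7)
The plan is to derive all three splittings from Corollary \ref{Corollary_Hodge_decomposition} by factoring $\Delta_{E,g,b}=\mathop{d_{E}}\mathop{d^{\sharp}_{E,g,b}}+\mathop{d^{\sharp}_{E,g,b}}\mathop{d_{E}}$ on $\Omega^{c}|_{\mathcal{B}}:=\Omega_{\Delta_{\mathcal{B}}}(M;E)(0)^{\mathsf{c}}|_{\mathcal{B}}$ and tracking boundary conditions via Lemma \ref{Lemma_image_by_d_of_omega_2_and_symmetry_beta_boundary_conditions} together with the Hodge-$\star_{b}$ intertwinings $\star_{b}\mathop{d_{E}}=\pm d^{\sharp}_{E'\otimes\Theta_{M},g,b'}\star_{b}$ and $\star_{b}\mathop{d^{\sharp}_{E,g,b}}=\pm d_{E'\otimes\Theta_{M}}\star_{b}$ recalled in Section \ref{Boundary_conditions_and_Poincae_duality}.

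For (\ref{Proposition_from_Hodge_decomposition_2_Decomposition_of_forms_from_Theorem_Hodge_decomposition_1}), writing $\Delta u=\mathop{d_{E}}(\mathop{d^{\sharp}_{E,g,b}} u)+\mathop{d^{\sharp}_{E,g,b}}(\mathop{d_{E}} u)$ for $u\in\Omega^{c}|_{\mathcal{B}}$ refines Corollary \ref{Corollary_Hodge_decomposition} into the claimed three-term sum. For $\beta_{g,b}$-orthogonality between $\mathop{d_{E}}\mathop{d^{\sharp}_{E,g,b}}(\Omega^{c}|_{\mathcal{B}})$ and $\mathop{d^{\sharp}_{E,g,b}}\mathop{d_{E}}(\Omega^{c}|_{\mathcal{B}})$, I apply the first Green's formula in (\ref{Lemma_Stokes_theorem_transpose_Greens_Formulas_non_self_adjoint_Laplacian}) to $\beta_{g,b}(\mathop{d_{E}}\mathop{d^{\sharp}_{E,g,b}} v,\mathop{d^{\sharp}_{E,g,b}}\mathop{d_{E}} w)$: the interior term $\beta_{g,b}(\mathop{d^{\sharp}_{E,g,b}} v,(\mathop{d^{\sharp}_{E,g,b}})^{2}\mathop{d_{E}} w)$ vanishes because $(\mathop{d^{\sharp}_{E,g,b}})^{2}=0$, while the boundary integral vanishes on $\partial_{-}M$ from $\mathop{d^{\sharp}_{E,g,b}} v\in\Omega(M;E)|_{\mathcal{B}^{0}}$ (Lemma \ref{Lemma_image_by_d_of_omega_2_and_symmetry_beta_boundary_conditions}(c)), and on $\partial_{+}M$ after rewriting $\star_{b}\mathop{d^{\sharp}_{E,g,b}}\mathop{d_{E}} w=\pm d_{E'\otimes\Theta_{M}}d^{\sharp}_{E'\otimes\Theta_{M},g,b'}\star_{b}w$ and using $i^{*}_{+}\star_{b}w=0$ and $i^{*}_{+}d^{\sharp}_{E'\otimes\Theta_{M},g,b'}\star_{b}w=0$ carried by $w\in\Omega(M;E)|_{\mathcal{B}}$ together with the commutativity of $i^{*}$ with de Rham differentials.

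For (\ref{Proposition_from_Hodge_decomposition_2_Decomposition_of_forms_in_domain_of_d_E}), given $v\in\Omega(M;E)|_{\mathcal{B}_{-}^{0}}$, Corollary \ref{Corollary_Hodge_decomposition} produces a decomposition $v=v_{0}+\mathop{d_{E}}(\mathop{d^{\sharp}_{E,g,b}} u)+\mathop{d^{\sharp}_{E,g,b}}(\mathop{d_{E}} u)$ with $v_{0}\in\Omega_{\Delta_{\mathcal{B}}}(M;E)(0)$ and $u\in\Omega^{c}|_{\mathcal{B}}$. The middle summand lies in $\mathop{d_{E}}(\Omega^{c}|_{\mathcal{B}_{-}^{0}})$ since $\mathop{d^{\sharp}_{E,g,b}} u\in\Omega^{c}|_{\mathcal{B}^{0}}\subseteq\Omega^{c}|_{\mathcal{B}_{-}^{0}}$ by Lemma \ref{Lemma_image_by_d_of_omega_2_and_symmetry_beta_boundary_conditions}(c). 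The nontrivial claim is that $\mathop{d_{E}} u\in\Omega^{c}|_{\mathcal{B}}$: beyond the $\mathcal{B}^{0}$ part already supplied by (c), the condition $\mathcal{B}_{-}^{1}$ on $\mathop{d_{E}} u$ follows from $i^{*}_{-}\mathop{d^{\sharp}_{E,g,b}}\mathop{d_{E}} u=i^{*}_{-}(\Delta u-\mathop{d_{E}}\mathop{d^{\sharp}_{E,g,b}} u)=i^{*}_{-}(v-v_{0})=0$ (using $v,v_{0}\in\Omega(M;E)|_{\mathcal{B}_{-}^{0}}$ and $i^{*}_{-}\mathop{d_{E}}\mathop{d^{\sharp}_{E,g,b}} u=0$ from $u\in\Omega(M;E)|_{\mathcal{B}}$), and the condition $\mathcal{B}_{+}^{1}$ from $\star_{b}\mathop{d^{\sharp}_{E,g,b}}\mathop{d_{E}} u=\pm(d^{\sharp}_{E'\otimes\Theta_{M},g,b'})^{2}\star_{b}u=0$. $\beta_{g,b}$-orthogonality between $\mathop{d_{E}}(\Omega^{c}|_{\mathcal{B}_{-}^{0}})$ and $\mathop{d^{\sharp}_{E,g,b}}(\Omega^{c}|_{\mathcal{B}})$ is Lemma \ref{Lemma_image_by_d_of_omega_2_and_symmetry_beta_boundary_conditions}(d). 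Orthogonality of both summands to $\Omega_{\Delta_{\mathcal{B}}}(M;E)(0)$ extends Corollary \ref{Corollary_Beta_orthogonality_d_d_sharp}: for $u'\in\Omega^{c}|_{\mathcal{B}_{-}^{0}}$ and $\omega\in\Omega_{\Delta_{\mathcal{B}}}(M;E)(0)\subseteq\Omega(M;E)|_{\mathcal{B}}$, both boundary contributions in the Green's formula for $\beta_{g,b}(\mathop{d_{E}} u',\omega)-\beta_{g,b}(u',\mathop{d^{\sharp}_{E,g,b}}\omega)$ vanish (on $\partial_{-}M$ by $i^{*}_{-}u'=0$, on $\partial_{+}M$ by $i^{*}_{+}\star_{b}\omega=0$), reducing to the $\mathop{d^{\sharp}_{E,g,b}}$-invariance of $\Omega_{\Delta_{\mathcal{B}}}(M;E)(0)$ and Proposition \ref{Proposition_orthogonal_decomposition_smooth_forms}.

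The argument for (\ref{Proposition_from_Hodge_decomposition_2_Decomposition_of_forms_satisfying_1}) is dual: the stronger hypothesis $v\in\Omega(M;E)|_{\mathcal{B}^{0}}$ together with $\star_{b}\Delta_{E,g,b}=\Delta_{E'\otimes\Theta_{M},g,b'}\star_{b}$ upgrades $\mathop{d^{\sharp}_{E,g,b}} u$ to $\Omega^{c}|_{\mathcal{B}}$ (the $\mathcal{B}_{+}^{1}$-condition on $\mathop{d^{\sharp}_{E,g,b}} u$ is verified by the same mechanism, now exploiting $v\in\mathcal{B}_{+}^{0}$), so that both intermediate forms land in the finer $\mathop{d_{E}}(\Omega^{c}|_{\mathcal{B}})$ and $\mathop{d^{\sharp}_{E,g,b}}(\Omega^{c}|_{\mathcal{B}})$, with $\beta_{g,b}$-orthogonality supplied directly by Corollary \ref{Corollary_Beta_orthogonality_d_d_sharp}. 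Non-degeneracy of $\beta_{g,b}$ on each summand is automatic: $\beta_{g,b}$ is non-degenerate on each of $\Omega(M;E)$, $\Omega(M;E)|_{\mathcal{B}_{-}^{0}}$ and $\Omega(M;E)|_{\mathcal{B}^{0}}$ (as each contains all interior test forms), and non-degeneracy propagates to any $\beta_{g,b}$-orthogonal summand of such a space. The main obstacle is the membership $\mathop{d_{E}} u\in\Omega^{c}|_{\mathcal{B}}$ (and the dual claim $\mathop{d^{\sharp}_{E,g,b}} u\in\Omega^{c}|_{\mathcal{B}}$), which reconciles the asymmetric appearance of $\mathcal{B}$ in the final summand with the weaker boundary hypothesis on $v$ and relies crucially on the Hodge-$\star_{b}$ duality of Section \ref{Boundary_conditions_and_Poincae_duality} rather than any formal self-adjointness.
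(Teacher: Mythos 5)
Your argument follows essentially the same route as the paper's: refine Corollary \ref{Corollary_Hodge_decomposition} by splitting $\Delta_{E,g,b}=\mathop{d_{E}}\mathop{d^{\sharp}_{E,g,b}}+\mathop{d^{\sharp}_{E,g,b}}\mathop{d_{E}}$ on $\left.{\Omega_{\Delta_{\mathcal{B}}}(M;E)(0)^{\mathsf{c}}}\right.|_{\mathcal{B}}$, track boundary conditions through Lemma \ref{Lemma_image_by_d_of_omega_2_and_symmetry_beta_boundary_conditions} and the Hodge-$\star_{b}$ duality, and obtain $\beta_{g,b}$-orthogonality from the Green's formulas. You are in fact more explicit than the paper on the spanning step for (\ref{Proposition_from_Hodge_decomposition_2_Decomposition_of_forms_in_domain_of_d_E}) and (\ref{Proposition_from_Hodge_decomposition_2_Decomposition_of_forms_satisfying_1}); in particular the observation that $i^{*}_{-}\mathop{d^{\sharp}_{E,g,b}}\mathop{d_{E}}u$ vanishes because $\Delta_{E,g,b}u=v-v_{0}$ with $v,v_{0}\in\Omega(M;E)|_{\mathcal{B}_{-}^{0}}$ is exactly the point the paper omits, and your verification of the $\mathcal{B}^{1}_{+}$-conditions via $\star_{b}$-intertwining is correct.

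There is, however, one item of the statement your write-up never addresses: that the sums are \emph{direct}. Since $\beta_{g,b}$ is a complex symmetric bilinear form rather than a positive definite inner product, pairwise orthogonality of the summands does not by itself make their intersections trivial, and the paper spends the second half of its proof on precisely this (the computations (a)--(d) showing that an element of $\mathop{d_{E}}\mathop{d^{\sharp}_{E,g,b}}(\cdots)\cap\mathop{d^{\sharp}_{E,g,b}}\mathop{d_{E}}(\cdots)$ is harmonic and satisfies all four boundary conditions, hence lies in $\Omega_{\Delta_{\mathcal{B}}}(M;E)(0)\cap{\Omega_{\Delta_{\mathcal{B}}}(M;E)(0)^{\mathsf{c}}}=0$, together with the analogous checks for (\ref{Proposition_from_Hodge_decomposition_2_Decomposition_of_forms_in_domain_of_d_E}) and (\ref{Proposition_from_Hodge_decomposition_2_Decomposition_of_forms_satisfying_1})). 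Your proof can be completed without that computation, but you must say how: having shown that each ambient space is the (not yet direct) sum of pairwise $\beta_{g,b}$-orthogonal subspaces and that $\beta_{g,b}$ is nondegenerate on the ambient space, one gets that $\beta_{g,b}$ is nondegenerate on each summand (a vector orthogonal to its own summand is, by pairwise orthogonality, orthogonal to the whole space), and then any element of the intersection of one summand with the sum of the others is orthogonal to the whole ambient space and hence zero. As written, your closing sentence about nondegeneracy propagating to orthogonal summands supplies the first of these two steps but the conclusion of directness is never drawn; add it and the proof is complete.
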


\begin{proof}
    We prove (\ref{Proposition_from_Hodge_decomposition_2_Decomposition_of_forms_from_Theorem_Hodge_decomposition_1}).
    From Corollary \ref{Corollary_Hodge_decomposition},  every $u\in\Omega(M;E)$ can be written as
    $u=u_0+\mathop{d_{E}}(\mathop{d^{\sharp}_{E,g,b}}u)+\mathop{d^{\sharp}_{E,g,b}}(\mathop{d_{E}}u),$
    with $u_0\in\Omega_{\Delta_{\mathcal{B}}}(M;E)(0)$ and
    $u\in\left.{\Omega_{\Delta_{\mathcal{B}}}(M;E)(0)^{\mathsf{c}}}\right.|_{\mathcal{B}}$.
    That
    $$
    \mathop{d_{E}}(\mathop{d^{\sharp}_{E,g,b}}(\left.{\Omega_{\Delta_{\mathcal{B}}}(M;E)(0)^{\mathsf{c}}}\right.|_{\mathcal{B}}))
    \perp_{\beta_{g,b}}
    \mathop{d^{\sharp}_{E,g,b}}(\mathop{d_{E}}(\left.{\Omega_{\Delta_{\mathcal{B}}}(M;E)(0)^{\mathsf{c}}}\right.|_{\mathcal{B}})),
    $$
    follows from Lemma
    \ref{Lemma_image_by_d_of_omega_2_and_symmetry_beta_boundary_conditions}
    and $\mathop{d_{E}}^2=0$.
    To see that (\ref{Proposition_from_Hodge_decomposition_2_Decomposition_of_forms_from_Theorem_Hodge_decomposition_1})
    is a direct sum,
    we check that the intersection of the last two spaces on the right of (\ref{Proposition_from_Hodge_decomposition_2_Decomposition_of_forms_from_Theorem_Hodge_decomposition_1})
    is trivial. So, take $u\in\Omega_{\Delta_{\mathcal{B}}}(M;E)(0)^{\mathsf{c}}$,
    and suppose there are $v, w\in\left.{\Omega_{\Delta_{\mathcal{B}}}(M;E)(0)^{\mathsf{c}}}\right.|_{\mathcal{B}}$
    with $u=\mathop{d_{E}}(\mathop{d^{\sharp}_{E,g,b}}v)=\mathop{d^{\sharp}_{E,g,b}}(\mathop{d_{E}}w)$. Remark obviously that
    $
    \Delta_{E,g,b}u=0$
    but also that $u\in\Omega_{\Delta_{\mathcal{B}}}(M;E)(0)$, since
    \begin{itemize}
     \item[(a)] $
    i^{*}_{-}u= 
    \mathop{d_{E}}(i^{*}_{-}\mathop{d^{\sharp}_{E,g,b}}v)=0,
    $ as $v$ satisfies boundary conditions,
     \item[(b)]  $
    i^{*}_{-}\mathop{d^{\sharp}_{E,g,b}}u= i^{*}_{-}\mathop{d^{\sharp}_{E,g,b}}\mathop{d^{\sharp}_{E,g,b}}\mathop{d_{E}}v=0,
    $
    \item [(c)]
	  $
           i^{*}_{+}\star_{b}u= 
			      \pm \mathop{d_{E}}(i^{*}_{+}\mathop{d^{\sharp}_{E,g,b}}\star_{b}w)=0;
	  $ as $w$ satisfies boundary conditions,
	\item [(d)]
	  $
		i^{*}_{+}\mathop{d^{\sharp}_{E,g,b}}\star_{b}u=
		  \pm i^{*}_{+}\star_{b}\mathop{d_{E}}(\mathop{d_{E}}\mathop{d^{\sharp}_{E,g,b}}v)=0;
	  $
    \end{itemize}
    therefore, from Proposition
    \ref{Proposition_orthogonal_decomposition_smooth_forms}, $u$ must vanish, so that the sum in
    (\ref{Proposition_from_Hodge_decomposition_2_Decomposition_of_forms_from_Theorem_Hodge_decomposition_1}) is direct.
    This decomposition is clearly $\beta_{g,b}$-orthogonal.
    The decompositions in (\ref{Proposition_from_Hodge_decomposition_2_Decomposition_of_forms_in_domain_of_d_E})
    and (\ref{Proposition_from_Hodge_decomposition_2_Decomposition_of_forms_satisfying_1}) follow from
    that in (\ref{Proposition_from_Hodge_decomposition_2_Decomposition_of_forms_from_Theorem_Hodge_decomposition_1}),
    Lemma \ref{Lemma_image_by_d_of_omega_2_and_symmetry_beta_boundary_conditions}, the isomorphism in
    (\ref{Lemma_invertibility_of_Delta_in_smooth_subspaces}) and the definition of boundary conditions as we have proceeded
    to prove the statement
    (\ref{Proposition_from_Hodge_decomposition_2_Decomposition_of_forms_from_Theorem_Hodge_decomposition_1}); we omit the details.
    Now, since
    $
    \mathop{d_{E}}(\left.{\Omega_{\Delta_{\mathcal{B}}}(M;E)(0)^{\mathsf{c}}}\right.|_{\mathcal{B}})\subset
                \mathop{d_{E}}(\left.{\Omega_{\Delta_{\mathcal{B}}}(M;E)(0)^{\mathsf{c}}}\right.|_{\mathcal{B}^{0}_{-}}),
    $
    directness of decomposition
    (\ref{Proposition_from_Hodge_decomposition_2_Decomposition_of_forms_satisfying_1}) follows from that of
    (\ref{Proposition_from_Hodge_decomposition_2_Decomposition_of_forms_in_domain_of_d_E}). To check directness
    in (\ref{Proposition_from_Hodge_decomposition_2_Decomposition_of_forms_in_domain_of_d_E}), firstly observe that by Proposition
    \ref{Proposition_orthogonal_decomposition_smooth_forms} we have
    $\mathop{d_{E}}({\Omega_{\Delta_{\mathcal{B}}}(M;E)(0)^{\mathsf{c}}}|_{\mathcal{B}_{-}^{0}})
    \subset{\Omega_{\Delta_{\mathcal{B}}}(M;E)(0)^{\mathsf{c}}}|_{\mathcal{B}_{-}^{0}}$ and therefore the intersection of the space
    $\Omega_{\Delta_{\mathcal{B}}}(M;E)(0)$ with $
    \left. \mathop{d_{E}}\right.(\left.{\Omega_{\Delta_{\mathcal{B}}}(M;E)(0)^{\mathsf{c}}}\right.|_{\mathcal{B}_{-}^{0}})$ is trivial.
    Secondly, from the inclusion
  $\left.{\Omega_{\Delta_{\mathcal{B}}}(M;E)(0)^{\mathsf{c}}}\right.|_{\mathcal{B}}\subset
  \left.{\Omega_{\Delta_{\mathcal{B}}}(M;E)(0)^{\mathsf{c}}}\right.|_{\mathcal{B}^{0}}$,
  Corollary \ref{Corollary_Beta_orthogonality_d_d_sharp} and
  Proposition \ref{Proposition_orthogonal_decomposition_smooth_forms},  the intersection of
  $\Omega_{\Delta_{\mathcal{B}}}(M;E)(0)$ with the space $
  \mathop{d^{\sharp}_{E,g,b}}(\left.{\Omega_{\Delta_{\mathcal{B}}}(M;E)(0)^{\mathsf{c}}}\right.|_{\mathcal{B}})$ is also trivial.
  Thirdly, the intersection between $\left. \mathop{d_{E}}\right.(\left.{\Omega_{\Delta_{\mathcal{B}}}(M;E)(0)^{\mathsf{c}}}\right.|_{\mathcal{B}_{-}^{0}})$
  and  $\mathop{d^{\sharp}_{E,g,b}}(\left.{\Omega_{\Delta_{\mathcal{B}}}(M;E)(0)^{\mathsf{c}}}\right.|_{\mathcal{B}})$
  is trivial as well; indeed, if
  $u\in\Omega_{\Delta_{\mathcal{B}}}(M;E)(0)^{\mathsf{c}}$ with
  $u=\mathop{d_{E}}v$ for certain $v\in\left.{\Omega_{\Delta_{\mathcal{B}}}(M;E)(0)^{\mathsf{c}}}\right.|_{\mathcal{B}_{-}^{0}}$ and
  $u=\mathop{d^{\sharp}_{E,g,b}}w$ for
  $w\in \mathop{d^{\sharp}_{E,g,b}}(\left.{\Omega_{\Delta_{\mathcal{B}}}(M;E)(0)^{\mathsf{c}}}\right.|_{\mathcal{B}})$, then, it is
  follows that
  $u\in\Omega_{\Delta_{\mathcal{B}}}(M;E)(0)$, and therefore $u=0$.
  Finally, the bilinear form $\beta_{g,b}$ is nondegenerate on each of the spaces appearing in the direct sum decompositions (i), (ii) and (iii).
  Indeed, on the one hand, $\beta_{g,b}$ is nondegenerate on each of the spaces appearing on the left hand side of the equalities (i), (ii) and (iii),
  exactly for the same reason as $\beta_{g,b}$ is nondegenerate on
  $\Omega_{0}(M;E)$, the
  space of smooth forms compactly supported in the interior of $M$; this follows immediately from the requirement for $b$ to be fiberwise nondegenerate on $E$.
  On the other hand, from Lemma
 \ref{Lemma_image_by_d_of_omega_2_and_symmetry_beta_boundary_conditions},
 the direct sum decompositions in (\ref{Proposition_from_Hodge_decomposition_2_Decomposition_of_forms_from_Theorem_Hodge_decomposition_1}),
 (\ref{Proposition_from_Hodge_decomposition_2_Decomposition_of_forms_in_domain_of_d_E})
 and (\ref{Proposition_from_Hodge_decomposition_2_Decomposition_of_forms_satisfying_1}) are $\beta_{g,b}$-orthogonal. Thus,
 $\beta_{g,b}$ restricts to each space appearing on the right hand side of
 (\ref{Proposition_from_Hodge_decomposition_2_Decomposition_of_forms_from_Theorem_Hodge_decomposition_1}),
 (\ref{Proposition_from_Hodge_decomposition_2_Decomposition_of_forms_in_domain_of_d_E})
 and (\ref{Proposition_from_Hodge_decomposition_2_Decomposition_of_forms_satisfying_1})
 as a nondegenerate bilinear form as well.
\end{proof}

\subsection{Cohomology}
\label{subsection_cohomology}
 Recall the notation suggested in Lemma \ref{Lemma_image_by_d_of_omega_2_and_symmetry_beta_boundary_conditions}.
 The space
   $\Omega(M;E)|_{\mathcal{B}_{-}^{0}}$ endowed with the differential $\mathop{d_{E}}$ is a cochain complex, which
  computes De-Rham cohomology of $M$ relative to
  $\partial_{-}M$
  with coefficients on $E$,
  see for instance \cite{Bott-Tu}.
  For $\lambda\in\mathsf{Spec}(\Delta_{\mathcal{B}})$, consider
 $\Omega_{\Delta_{\mathcal{B}}}(M;E)(\lambda)$ as a cochain subcomplex of $\Omega(M;E)|_{\mathcal{B}_{-}^{0}}$. 
 From Lemma \ref{Kato}, Lemma \ref{Proposition_finite_dimensional_generalized_eigen_spaces}
 and the isomorphism in (\ref{Lemma_invertibility_of_Delta_in_smooth_subspaces}), every generalized eigenspace
 corresponding to a \textit{non-zero} eigenvalue is acyclic, i.e.,
 $
 H(\Omega_{\Delta_{\mathcal{B}}}(M;E)(\lambda))=0
 $
 whenever $\lambda\not=0$.
 For $\lambda=0$, we have the following.
 \begin{prop}
    \label{Proposition_Absolute_relative_cohomology}
The inclusion
 $\Omega_{\Delta_{\mathcal{B}}}(M;E)(0)\hookrightarrow\Omega(M;E)|_{\mathcal{B}_{-}^{0}}$
 induces an isomorphism in cohomology:
    $
    H^{*}(\Omega_{\Delta_{\mathcal{B}}}(M;E)(0))\cong
    H^{*}(M,\partial_{-}M,E).
    $
\end{prop}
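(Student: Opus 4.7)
The plan is to derive the isomorphism from the $\beta_{g,b}$-orthogonal Hodge-type decomposition (\ref{Proposition_from_Hodge_decomposition_2_Decomposition_of_forms_in_domain_of_d_E}) of Proposition \ref{Proposition_from_Hodge_decomposition_2}, by viewing the finite-dimensional space $\Omega_{\Delta_{\mathcal{B}}}(M;E)(0)$ as a subcomplex of $(\Omega(M;E)|_{\mathcal{B}_{-}^{0}},\mathop{d_{E}})$. That it \emph{is} a subcomplex follows from the inclusion $\Omega_{\Delta_{\mathcal{B}}}(M;E)(0)\subset\Omega(M;E)|_{\mathcal{B}}\subset\Omega(M;E)|_{\mathcal{B}_{-}^{0}}$ and its $\mathop{d_{E}}$-invariance (Lemma \ref{Proposition_finite_dimensional_generalized_eigen_spaces}). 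For brevity, write $H:=\Omega_{\Delta_{\mathcal{B}}}(M;E)(0)$ and $H^{\mathsf{c}}:=\Omega_{\Delta_{\mathcal{B}}}(M;E)(0)^{\mathsf{c}}$.

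For surjectivity, I would take a $\mathop{d_{E}}$-closed form $u\in\Omega(M;E)|_{\mathcal{B}_{-}^{0}}$ and decompose $u=u_{0}+\mathop{d_{E}}\alpha+\mathop{d^{\sharp}_{E,g,b}}\beta$ with $u_{0}\in H$, $\alpha\in H^{\mathsf{c}}|_{\mathcal{B}_{-}^{0}}$ and $\beta\in H^{\mathsf{c}}|_{\mathcal{B}}$. Applying $\mathop{d_{E}}$ gives $0=\mathop{d_{E}} u_{0}+\mathop{d_{E}}\mathop{d^{\sharp}_{E,g,b}}\beta$, where the first summand lies in $H$ by $\mathop{d_{E}}$-invariance, while $\mathop{d^{\sharp}_{E,g,b}}\beta\in H^{\mathsf{c}}$ (use Corollary \ref{Corollary_Beta_orthogonality_d_d_sharp} together with nondegeneracy of $\beta_{g,b}|_{H}$) and hence $\mathop{d_{E}}\mathop{d^{\sharp}_{E,g,b}}\beta\in H^{\mathsf{c}}$ by the $\mathop{d_{E}}$-invariance of $H^{\mathsf{c}}|_{\mathcal{B}_{-}^{0}}$ asserted in Proposition \ref{Proposition_orthogonal_decomposition_smooth_forms}. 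Since $H\cap H^{\mathsf{c}}=0$, both pieces vanish separately. Setting $\delta:=\mathop{d^{\sharp}_{E,g,b}}\beta$, I would then verify that $\delta\in H^{\mathsf{c}}|_{\mathcal{B}}$: the $\mathcal{B}^{0}$-conditions come from Lemma \ref{Lemma_image_by_d_of_omega_2_and_symmetry_beta_boundary_conditions}(c); $\mathcal{B}_{-}^{1}\delta=0$ is immediate from $(\mathop{d^{\sharp}_{E,g,b}})^{2}=0$; and $\mathcal{B}_{+}^{1}\delta=0$ follows from $\mathop{d_{E}}\delta=0$ via the intertwining $d^{\sharp}_{E^{\prime}\otimes\Theta_{M},g,b^{\prime}}\star_{b}=\pm\,\star_{b}\mathop{d_{E}}$ recorded in Section \ref{Boundary_conditions_and_Poincae_duality}. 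Since also $\Delta_{E,g,b}\delta=0$, the invertibility isomorphism (\ref{Lemma_invertibility_of_Delta_in_smooth_subspaces}) forces $\delta=0$. Hence $u=u_{0}+\mathop{d_{E}}\alpha$ with $\alpha\in\Omega(M;E)|_{\mathcal{B}_{-}^{0}}$, so $[u]=[u_{0}]$ in $H^{*}(M,\partial_{-}M,E)$.

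For injectivity, suppose $u_{0}\in H$ satisfies $\mathop{d_{E}} u_{0}=0$ and $u_{0}=\mathop{d_{E}} v$ for some $v\in\Omega(M;E)|_{\mathcal{B}_{-}^{0}}$. Decomposing $v=v_{0}+\mathop{d_{E}}\alpha+\mathop{d^{\sharp}_{E,g,b}}\beta$ in the same way gives $u_{0}=\mathop{d_{E}} v_{0}+\mathop{d_{E}}\mathop{d^{\sharp}_{E,g,b}}\beta$, and the orthogonality argument above yields $u_{0}=\mathop{d_{E}} v_{0}$ with $v_{0}\in H$, whence $[u_{0}]=0$ in $H^{*}(H)$.

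The main obstacle, and the essential departure from the Hermitian setting, is the step $\delta=0$: since $\beta_{g,b}$ is a nondegenerate bilinear form rather than a positive Hermitian inner product, an identity of the shape $\beta_{g,b}(\mathop{d^{\sharp}_{E,g,b}}\beta,\mathop{d^{\sharp}_{E,g,b}}\beta)=0$ carries no vanishing information. The proof must therefore replace the usual integration-by-parts argument with the spectral-theoretic invertibility (\ref{Lemma_invertibility_of_Delta_in_smooth_subspaces}) of $\Delta_{\mathcal{B}}$ on the complement of $H$, after verifying that $\delta=\mathop{d^{\sharp}_{E,g,b}}\beta$ genuinely satisfies the full mixed boundary conditions $\mathcal{B}$. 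This last verification on $\partial_{+}M$ is the principal non-routine bookkeeping and relies on the Poincaré-duality intertwining between the bundles $E$ and $E^{\prime}\otimes\Theta_{M}$.
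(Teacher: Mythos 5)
Your proof is correct, and at the one step where the lack of positivity bites it uses a genuinely different mechanism than the paper's. Both arguments begin the same way: decompose a closed form via (\ref{Proposition_from_Hodge_decomposition_2_Decomposition_of_forms_in_domain_of_d_E}), apply $\mathop{d_{E}}$, and use directness of $\Omega_{\Delta_{\mathcal{B}}}(M;E)(0)\oplus\Omega_{\Delta_{\mathcal{B}}}(M;E)(0)^{\mathsf{c}}$ to isolate the problematic piece $\delta=\mathop{d^{\sharp}_{E,g,b}}\beta$ with $\mathop{d_{E}}\delta=0$. To show $\delta=0$, the paper pairs $\delta$ against an arbitrary element of $\Omega_{\Delta_{\mathcal{B}}}(M;E)(0)^{\mathsf{c}}|_{\mathcal{B}^{0}}$, decomposed once more via (\ref{Proposition_from_Hodge_decomposition_2_Decomposition_of_forms_from_Theorem_Hodge_decomposition_1}), shows the pairing vanishes using Lemma \ref{Lemma_image_by_d_of_omega_2_and_symmetry_beta_boundary_conditions} and $(\mathop{d^{\sharp}_{E,g,b}})^{2}=0$, and concludes from the nondegeneracy of $\beta_{g,b}$ on that subspace (the last assertion of Proposition \ref{Proposition_from_Hodge_decomposition_2}). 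You instead check that $\delta$ satisfies the full mixed boundary conditions and is harmonic, and then invoke the injectivity of the isomorphism (\ref{Lemma_invertibility_of_Delta_in_smooth_subspaces}) at $\lambda=0$. Both are sound replacements for the usual $\langle\delta,\delta\rangle=0$ argument of the Hermitian case; yours costs the boundary-condition bookkeeping on $\partial_{+}M$ (and note that the identity you cite, $d^{\sharp}_{E^{\prime}\otimes\Theta_{M},g,b^{\prime}}\star_{b}=\pm\star_{b}\mathop{d_{E}}$, is not literally displayed in Section \ref{Boundary_conditions_and_Poincae_duality}, though it follows immediately from (\ref{formula_definition_d_transpose}) and $\star_{b^{\prime}}\star_{b}=\pm 1$), while the paper's costs a second pass through the Hodge decomposition plus the nondegeneracy statement. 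Your direct surjectivity/injectivity formulation versus the paper's acyclicity-of-the-complement formulation is only a cosmetic difference given the direct sum; the small advantage of your route is that it localizes the whole difficulty in the single, checkable fact that $\delta$ lies in the domain of an invertible operator.
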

\begin{proof}
Since $\Omega_{\Delta_{\mathcal{B}}}(M;E)(0)\subset\Omega(M;E)|_{\mathcal{B}_{-}^{0}}$, the space
$\Omega(M;E)|_{\mathcal{B}_{-}^{0}}$ admits a decomposition compatible with the one in
Corollary \ref{Corollary_Hodge_decomposition} and therefore it decomposes as
$$
\Omega(M;E)|_{\mathcal{B}_{-}^{0}}\cong\Omega_{\Delta_{\mathcal{B}}}(M;E)(0)
\oplus \left.\Delta_{E,g,b}(\left.{\Omega_{\Delta_{\mathcal{B}}}(M;E)(0)^{\mathsf{c}}}\right.|_{\mathcal{B}})\right.|_{\mathcal{B}_{-}^{0}},
$$
where
$
\left.\Delta_{E,g,b}(\left.{\Omega_{\Delta_{\mathcal{B}}}(M;E)(0)^{\mathsf{c}}}\right.|_{\mathcal{B}})\right.|_{\mathcal{B}_{-}^{0}}
$
is also a cochain subcomplex,
because of Proposition \ref{Proposition_orthogonal_decomposition_smooth_forms}
and that $\Omega(M;E)|_{\mathcal{B}_{-}^{0}}$ is invariant under the action of $\mathop{d_{E}}$.
Thus the assertion is true, if the corresponding cohomology groups vanish; that is, if
every closed form $w$ in $\left.\Delta_{E,g,b}(\left.{\Omega_{\Delta_{\mathcal{B}}}(M;E)(0)^{\mathsf{c}}}\right.|_{\mathcal{B}})\right.|_{\mathcal{B}_{-}^{0}}$
is also exact.
By
Proposition \ref{Proposition_from_Hodge_decomposition_2}.(\ref{Proposition_from_Hodge_decomposition_2_Decomposition_of_forms_in_domain_of_d_E}), there exist
 $w_{1}\in\left.{\Omega_{\Delta_{\mathcal{B}}}(M;E)(0)^{\mathsf{c}}}\right.|_{\mathcal{B}_{-}^{0}}$ and
$w_2\in\left.{\Omega_{\Delta_{\mathcal{B}}}(M;E)(0)^{\mathsf{c}}}\right.|_{\mathcal{B}}$ such that
$w=\mathop{d_{E}}w_1+\mathop{d^{\sharp}_{E,g,b}}w_2$. First,
we claim that
$\beta_{g,b}(\mathop{d^{\sharp}_{E,g,b}}w_{2},v_{1})=0,$
 for all $v_{1}\in\left.{\Omega_{\Delta_{\mathcal{B}}}(M;E)(0)^{\mathsf{c}}}\right.|_{\mathcal{B}^{0}}$, see
(\ref{Definition_subspace_space_of_smooth_forms_with_boundary_conditions}); indeed,
from Proposition
\ref{Proposition_from_Hodge_decomposition_2}.(\ref{Proposition_from_Hodge_decomposition_2_Decomposition_of_forms_from_Theorem_Hodge_decomposition_1}),
there exist
$v_{2},u_{2}\in\left.{\Omega_{\Delta_{\mathcal{B}}}(M;E)(0)^{\mathsf{c}}}\right.|_{\mathcal{B}}$, such that
$v_1=\mathop{d_{E}}v_{2}+\mathop{d^{\sharp}_{E,g,b}}u_{2}$ and hence
$
\beta_{g,b}(\mathop{d^{\sharp}_{E,g,b}}w_2,\mathop{d_{E}}v_{2}+\mathop{d^{\sharp}_{E,g,b}}u_{2})=
0,
$
where we have used that $\mathop{d^{\sharp}_{E,g,b}}w_2$, $\mathop{d_{E}}v_{2} $ and
$\mathop{d^{\sharp}_{E,g,b}}u_{2}\in\left.{\Omega_{\Delta_{\mathcal{B}}}(M;E)(0)^{\mathsf{c}}}\right.|_{\mathcal{B}^{0}}$,
Lemma
\ref{Lemma_image_by_d_of_omega_2_and_symmetry_beta_boundary_conditions}, $(\mathop{d^{\sharp}_{E,g,b}})^{2}=0$ and that
$\beta_{g,b}(\mathop{d_{E}}\mathop{d^{\sharp}_{E,g,b}}w_2,u_{2})$ vanishes, because $w$ being close implies $\mathop{d_{E}}\mathop{d^{\sharp}_{E,g,b}}w_2=0$.
Finally, since $\mathop{d^{\sharp}_{E,g,b}}w_2$ belongs to $\left.{\Omega_{\Delta_{\mathcal{B}}}(M;E)(0)^{\mathsf{c}}}\right.|_{\mathcal{B}^{0}}$ as well,
and that $\beta_{g,b}$ restricted to this sub-space is also nondegenerate,
see Proposition \ref{Proposition_from_Hodge_decomposition_2},
from the claim above, we have $\mathop{d^{\sharp}_{E,g,b}}w_2=0$.
That is, $w$ is exact in
$\left.\Delta_{E,g,b}(\left.{\Omega_{\Delta_{\mathcal{B}}}(M;E)(0)^{\mathsf{c}}}\right.|_{\mathcal{B}})\right.|_{\mathcal{B}_{-}^{0}}$.

\end{proof}

\section{Heat trace asymptotic expansion and anomaly formulas}
\label{section_Heat_asymptotics and anomaly formulas}
\subsection{Heat trace asymptotics for an elliptic boundary value problem}
\label{subsection_Heat trace asymptotics for the heat kernel of a Laplace type operator under local boundary conditions}
 Let $(\mathsf{D},\mathcal{B})$ be a boundary value problem,
 where $\mathsf{D}$ is an operator of Laplace type and
 $\mathcal{B}$ is a boundary operator specifying absolute/relative boundary conditions, (or more generally \emph{mixed boundary conditions}, see \cite{Gilkey2}) and denote by $\mathsf{D}_{\mathcal{B}}$ its $L^{2}$-realization, see Section \ref{section_The spectrum of the bilinear Laplacian on manifolds with boundary}. Then, by \cite[Theorem 1.4.5]{Gilkey2}, for $t>0$ the heat kernel
 $\exp(-t\mathsf{D}_{\mathcal{B}})$ is a smoothing operator, of trace class in $L^{2}$-norm and for
 $t\rightarrow 0$, there is a complete asymptotic expansion:
 $$
    \left.\Tr\right._{L^{2}}(\psi \exp(-t\mathsf{D}_{\mathcal{B}}))\sim\sum^{\infty}_{n=0}a_{n}(\psi,\mathsf{D},\mathcal{B}) t^{(n-m)/2},
 $$
 where $\psi$ is a bundle endomorphism.
 The coefficients  $a_{n}(\psi,\mathsf{D},\mathcal{B})$,
 the \textit{heat trace asymptotic coefficients associated to $\psi$ and the boundary value problem} $(\mathsf{D},\mathcal{B})$, are given by the formula
  \begin{equation}
  \label{equation_coefficients_asymptotic_expansion}
  \scriptstyle a_{n}(\psi,\mathsf{D},\mathcal{B})=\scriptstyle
  \int_{M}\left.\Tr\right.(\psi\cdot\mathfrak{e}_{n}(\mathsf{D}))\mathsf{vol}_{g}(M)+
  \sum_{k=0}^{n-1}\int_{\partial M}\Tr\left({\nabla_{\varsigma_{\mathsf{in}}}}^{k}
  \psi\cdot\mathfrak{e}_{n,k}(\mathsf{D},\mathcal{B})\right)\mathsf{vol}_{g}(\partial M),
  \end{equation}
  where ${\nabla_{\varsigma_{\mathsf{in}}}}^{k}$ denotes the $k$-covariant
  derivative along the inwards pointing geodesic unit vector field normal to $\partial M$,
  computed with respect to the Levi--Civit\`a connection on $\Lambda^{*}(T^{*}M)$ and an auxiliary connection on the bundle.
  The quantities $\mathfrak{e}_{n}(x,\mathsf{D})$ and
  $\mathfrak{e}_{n,k}(y,\mathsf{D},\mathcal{B})$ in (\ref{equation_coefficients_asymptotic_expansion}) are
  invariant endomorphism-valued forms locally computable
  as polynomials
  in the jets of the symbol of $\mathsf{D}$ and $\mathcal{B}$, see \cite{Greiner}, \cite{Seeley}, \cite{Seeleya} and \cite{Seeleyb}.
  By using Weyl's theory of invariants, these endomorphism invariants can be
  expressible as universal polynomials in locally computable tensorial objects,
  see 
  \cite[Sections 1.7 and 1.8]{Gilkey2} (see also \cite[Sections 1.7, 1.9 and 4.8]{Gilkey1}) and  \cite[Section 3.1.8]{Gilkey2}.

  We are interested in the \emph{coefficient of the constant term} in the heat asymptotic expansion in (\ref{equation_coefficients_asymptotic_expansion}) corresponding to $n=\textsf{dim}(M)=m$, which in accord with the notation in \cite{Berline-and-Co}, we denote
  by
  \begin{equation}
  \label{equation_notation_for_the_constant_term_in_the asymptotic_expansion}
  \LIM_{t\rightarrow 0}(\left.\Tr\right._{L^{2}}(\psi \exp(-t\mathsf{D}_{\mathcal{B}}))):=a_{m}(\psi,\mathsf{D},\mathcal{B}).
  \end{equation}

\subsection{Heat trace asymptotics for the Hermitian Laplacian}
\label{subSection_Constant term in the heat trace asymptotics of Hermitian boundary value problems}
Br\"uning and Ma studied in \cite{Bruening-Ma} the Hermitian Laplacian
on a manifold with boundary under absolute boundary conditions and obtained anomaly formulas for
the associated Ray--Singer analytic metric.
They do so by computing the coefficient of the constant term in certain heat trace asymptotic expansion associated to the Hermitian boundary value problem.

  Proposition
  \ref{Proposition_infinitesimal_constant_term_in_asymptotic_expansion_Hermitian_absolute_boundary_conditions} below is basically due to the work by
  Br\"uning and Ma in \cite{Bruening-Ma}. In order to read its statement,
  we need certain characteristic forms on $M$ and $\partial M$. The forms defined on $M$, already appearing in the 
  anomaly formulas for the torsion in the situation without boundary,
  are the Euler form $\mathbf{e}(M,g)\in\Omega^{m}(M;\Theta_{M})$, associated to the metric $g$, 
  and secondary forms of Chern--Simons type
  $\mathbf{\widetilde{e}}(M,g,g^{\prime})\in\Omega^{m-1}(M;\Theta_{M})$ associated to two (smoothly connected) Riemannian metrics
  $g$ and $g^{\prime}$. The forms defined on $\partial M$, already defined by
  Br\"uning and Ma, are on the one hand $\mathbf{e_{b}}(\partial M,g)$ and $B(\partial M,g)\in\Omega^{m-1}(\partial M;\Theta_{M})$, see \cite[expression (1.17), page 775]{Bruening-Ma} and on the other
  certain Chern--Simons forms  $\mathbf{\widetilde{e}_{b}}(\partial M,g,g^{\prime})\in\Omega^{m-2}(\partial M;\Theta_{M})$,
  see \cite[expression (1.45), page 780]{Bruening-Ma}.
  For the sake of completeness, we recall in the Appendix, how these characteristic forms
  were constructed in \cite{Bruening-Ma}.

\begin{prop}\textbf{(Br\"uning--Ma)}
 \label{Proposition_infinitesimal_constant_term_in_asymptotic_expansion_Hermitian_absolute_boundary_conditions}
 Recall the remarks and the notation from Section \ref{Boundary_conditions_and_Poincae_duality}.
 Let $(M,\partial M,\emptyset)$ be a compact Riemannian bordism. Consider $[\Delta,{\mathcal{B}}]^{E,g,h}_{(M,\partial M,\emptyset)}$
 the Hermitian boundary value problem and denote by
 $\Delta_{\mathsf{abs},h}$
 its $L^{2}$-realization. Let here $\STr$ stand for supertrace.
 For $\phi\in\Gamma(M,\End(E))$ we have
\begin{eqnarray}
\label{equation_Proposition_infinitesimal_constant_term_in_asymptotic_expansion_Hermitian_absolute_boundary_conditions_1}
 \scriptstyle\LIM_{t\rightarrow 0} (\STr(\phi
 \exp(-t\Delta_{\mathsf{abs},h})))=
 \int_{M}\Tr(\phi)
 \mathbf{e}(M,g)-(-1)^{m}\int_{\partial M}i^{*}\Tr(\phi)
 \mathbf{e_{b}}(\partial M,g).
\end{eqnarray}
 Moreover,  for $\xi\in\Gamma(M,\End(TM))$ a symmetric endomorphism
 with respect to the metric $g$, and $\mathbf{D}^{*}\xi\in\Gamma(M,\End(\Lambda^{*}T^{*}M))$
 its extension as a derivation on $\Lambda^{*}(T^{*}M)$, set
 \begin{equation}
\label{equation_Proposition_infinitesimal_constant_term_in_asymptotic_expansion_Hermitian_absolute_boundary_conditions_2}
 \Psi:=\mathbf{D}^{*}\xi-\frac{1}{2}\Tr(\xi).
\end{equation}
 If $\tau\in\R$ is taken small enough so that
 $g+\tau g\xi$ is a nondegenerate symmetric metric on $TM$, then
 we have
\begin{eqnarray}
  \nonumber\scriptstyle\LIM_{t\rightarrow 0} \left(\STr\left(-\Psi
	      \exp(-t\Delta_{\mathsf{abs},h})\right)\right)
	    &=&\scriptstyle-2\int_{M}\left.\frac{\partial}{\partial\tau}\right|_{\tau=0
	      }\mathbf{\widetilde{e}}(M,g,g+\tau g\xi)\wedge\omega(\nabla^{E},h)\\
	   &&\label{equation_Proposition_infinitesimal_constant_term_in_asymptotic_expansion_Hermitian_absolute_boundary_conditions_3}
             \scriptstyle+2\int_{\partial M}\left.-\frac{\partial}{\partial\tau}\right|_{\tau=0
	      }\mathbf{\widetilde{e}_{b}}(\partial M,g,g+\tau g\xi)\wedge i^{*}\omega(\nabla^{E},h)\\
	      &&\nonumber\scriptstyle+\mathsf{rank}(E)\int_{\partial M}\left.\frac{\partial}{\partial\tau}\right|_{\tau=0}
	      B(\partial M,g+\tau g\xi),
\end{eqnarray}
 where  $\omega(\nabla^{E},h):=-\frac{1}{2}\Tr(h^{-1}\nabla^{E}h)$ is a real valued closed one-form.
 \end{prop}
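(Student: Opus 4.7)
The plan is to read off both identities from the local heat trace asymptotic expansion (\ref{equation_coefficients_asymptotic_expansion}) and to reorganise the output in terms of the characteristic forms recalled in the Appendix; this is essentially Br\"uning and Ma's calculation in \cite{Bruening-Ma}, to which we would reduce.

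For (\ref{equation_Proposition_infinitesimal_constant_term_in_asymptotic_expansion_Hermitian_absolute_boundary_conditions_1}), I would start from the local expression (\ref{equation_coefficients_asymptotic_expansion}) for $a_{m}(\phi,\Delta_{E,g,h},\mathcal{B})$. Since $\phi$ acts purely on the bundle factor, $\STr$ factors as $\Tr(\phi)$ times a purely geometric supertrace over $\Lambda^{*}T^{*}M$. The standard McKean--Singer cancellations reduce the interior integrand, via Weyl's invariant theory, to the Pfaffian $\mathbf{e}(M,g)$; the boundary heat invariants $\mathfrak{e}_{m,k}(\Delta,\mathcal{B})$ combine into Br\"uning--Ma's form $\mathbf{e_{b}}(\partial M,g)$ of \cite[(1.17)]{Bruening-Ma}. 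The sign $-(-1)^{m}$ records the orientation choice for the inward normal together with the identification $\Theta_{M}|_{\partial M}\cong\Theta_{\partial M}$ fixed in Section~\ref{Section_Complex valued analytic torsion on Manifolds with Boundary}.

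For (\ref{equation_Proposition_infinitesimal_constant_term_in_asymptotic_expansion_Hermitian_absolute_boundary_conditions_3}), I would consider the family $g_{\tau}:=g+\tau g\xi$. The induced infinitesimal change of the metric endomorphism on $\Omega(M;E)$ consists of a derivation $\mathbf{D}^{*}\xi$ on $\Lambda^{*}T^{*}M$ together with the volume correction $-\tfrac{1}{2}\Tr(\xi)$; that is exactly $\Psi$. Computing $\LIM_{t\to 0}\STr(-\Psi\exp(-t\Delta_{E,g,h}))$ directly from (\ref{equation_coefficients_asymptotic_expansion}) yields a local integrand built from the curvatures of $g$ and $\nabla^{E}$ and from the second fundamental form of $\partial M$. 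Using the transgression identity $\mathbf{e}(M,g+\tau g\xi)-\mathbf{e}(M,g)=d\widetilde{\mathbf{e}}(M,g,g+\tau g\xi)$, together with the boundary transgression of \cite[(1.45)]{Bruening-Ma}, and splitting the integrand according to whether it couples to the non-unitarity one-form $\omega(\nabla^{E},h)$ of the flat bundle, one reorganises the output into the three terms on the right of (\ref{equation_Proposition_infinitesimal_constant_term_in_asymptotic_expansion_Hermitian_absolute_boundary_conditions_3}): an interior contribution wedged with $\omega(\nabla^{E},h)$, a boundary contribution wedged with $i^{*}\omega(\nabla^{E},h)$, and an intrinsic boundary term $\mathsf{rank}(E)\cdot\partial_{\tau}|_{0}B(\partial M,g+\tau g\xi)$ that does not couple to $\omega(\nabla^{E},h)$.

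The principal obstacle is the identification of the boundary heat invariants $\mathfrak{e}_{m,k}(\Delta,\mathcal{B})$ with the explicit forms $\mathbf{e_{b}}$, $\widetilde{\mathbf{e}_{b}}$ and $B$: Weyl's theory pins down the allowable polynomial combinations in the second fundamental form and the curvatures, but determining the numerical coefficients requires the detailed normal-coordinate computation of \cite{Bruening-Ma}, which we invoke verbatim; the case $\partial_{-}M=\emptyset$ treated there is exactly the setting of the statement.
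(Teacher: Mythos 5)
Your overall strategy --- that both identities are, in substance, Br\"uning and Ma's computation for the absolute boundary value problem --- is the right one, and your treatment of the second identity correctly isolates the key point that $\Psi$ is the infinitesimal variation of the Hodge star along $g_{\tau}=g+\tau g\xi$. However, as written the argument has a genuine gap: Br\"uning and Ma's theorem is a \emph{variational} statement, i.e.\ it computes $\LIM_{t\to 0}\STr(\phi\exp(-t\Delta_{\mathsf{abs},h}))$ only for $\phi$ of the special form $h_u^{-1}\tfrac{\partial h_u}{\partial u}$ (hence $h$-selfadjoint) and for $\Psi=-\star_u^{-1}\tfrac{\partial \star_u}{\partial u}$, whereas the Proposition asserts the formulas for an \emph{arbitrary} $\phi\in\Gamma(M,\End(E))$ and an arbitrary $g$-symmetric $\xi$. ``Invoking \cite{Bruening-Ma} verbatim'' therefore does not literally cover the statement; one must explain why the general case follows. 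The paper does this by (a) decomposing $\phi=\phi^{\mathsf{re}}+\mathbf{i}\phi^{\mathsf{im}}$ into selfadjoint parts and using that both sides of (\ref{equation_Proposition_infinitesimal_constant_term_in_asymptotic_expansion_Hermitian_absolute_boundary_conditions_1}) are $\C$-linear in $\phi$; (b) realizing any selfadjoint $\phi$ as $h_0^{-1}\left(\left.\tfrac{\partial h_u}{\partial u}\right|_{u=0}\right)$ for the family $h_u:=h+uh\phi$, which is Hermitian for $u$ small; and (c) for the metric part, setting $g_u:=g+ug\xi$ and using \cite[Proposition 4.15]{Bismut-Zhang} to identify $\mathbf{D}^{*}\xi-\frac{1}{2}\Tr(\xi)$ with $-\star_0^{-1}\left(\left.\tfrac{\partial \star_u}{\partial u}\right|_{u=0}\right)$. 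These steps are short, but they are the actual content of the proof beyond the citation, and your write-up omits them.

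A second, smaller issue: your claims that the supertrace ``factors as $\Tr(\phi)$ times a purely geometric supertrace'' and that the boundary terms of (\ref{equation_coefficients_asymptotic_expansion}) involving the normal derivatives ${\nabla_{\varsigma_{\mathsf{in}}}}^{k}\phi$, $k\geq 1$, recombine into $i^{*}\Tr(\phi)\,\mathbf{e_{b}}(\partial M,g)$ are outputs of Br\"uning--Ma's computation, not a priori structural facts: $\Delta_{E,g,h}$ contains lower-order terms built from $h^{-1}\nabla^{E}h$, so the densities $\mathfrak{e}_{m}$ and $\mathfrak{e}_{m,k}$ are not of the form (geometric)$\,\otimes\,\mathrm{id}_{E}$, and the absence of derivative terms of $\phi$ and of $\omega(\nabla^{E},h)$ on the right-hand side of (\ref{equation_Proposition_infinitesimal_constant_term_in_asymptotic_expansion_Hermitian_absolute_boundary_conditions_1}) needs justification if you compute the densities directly. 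The reduction to metric variations sidesteps this entirely, which is why it is the preferable route.
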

     \begin{proof}
     We prove formula (\ref{equation_Proposition_infinitesimal_constant_term_in_asymptotic_expansion_Hermitian_absolute_boundary_conditions_1}).
     First, each $\phi\in\Gamma(M,\End(E))$ can be uniquely written as $\phi=\phi^{\mathsf{re}}+\mathbf{i}\phi^{\mathsf{im}}$ where
     $\phi^{\mathsf{re}},\phi^{\mathsf{im}}$ are selfadjoint elements.
     Thus, it is enough to prove (\ref{equation_Proposition_infinitesimal_constant_term_in_asymptotic_expansion_Hermitian_absolute_boundary_conditions_1}) for $\phi$ selfadjoint.
     First, suppose that $\phi_{u}:=h^{-1}_{u}\frac{\partial h_{u}}{\partial u}\in\Gamma(M,\End(E))$, where $h_{u}$ is a smooth one real parameter family of
     Hermitian forms on $E$ with $h_{0}=h$. Then, (\ref{equation_Proposition_infinitesimal_constant_term_in_asymptotic_expansion_Hermitian_absolute_boundary_conditions_1})
     exactly is the infinitesimal version of Br\"uning and Ma's formulas, see \cite[Theorem 4.6 ]{Bruening-Ma} and
     \cite[expression (5.72)]{Bruening-Ma}. Next, suppose $\phi\in\Gamma(M,\End(E))$ to be an arbitrary selfadjoint element. Then, for $u$ small enough,
     the family $h_{u}:=h+uh\phi$ is a smooth family of Hermitian forms on $E$ and
     $h^{-1}_{u}\frac{\partial h_{u}}{\partial u}=h_{u}^{-1}h\phi$ defines a smooth family of selfadjoint elements in $\Gamma(M,\End(E))$. Therefore, we apply Br\"uning and Ma's formulas for
     $h^{-1}_{0}\left(\left.\frac{\partial h_{u}}{\partial u}\right|_{u=0}\right)=\phi$ so that the proof of (\ref{equation_Proposition_infinitesimal_constant_term_in_asymptotic_expansion_Hermitian_absolute_boundary_conditions_1})
     is complete.
     We now prove (\ref{equation_Proposition_infinitesimal_constant_term_in_asymptotic_expansion_Hermitian_absolute_boundary_conditions_3}).
     Let $g_{u}$ be a smooth family of
     Riemannian metrics on $TM$ with $g_{0}=g$ and denote by $\star_{u}$ the
     Hodge $\star$-operator corresponding to $g_{u}$. First, consider the case where
     $\xi_{u}:=g_{u}^{-1}\frac{\partial g_{u}}{\partial u}\in\Gamma(M;\End(TM))$ so that, by
     (\ref{equation_Proposition_infinitesimal_constant_term_in_asymptotic_expansion_Hermitian_absolute_boundary_conditions_2}), we obtain
     $\Psi_{u}=\mathbf{D}^{*}(g_{u}^{-1}\frac{\partial g_{u}}{\partial u})-\frac{1}{2}\Tr(g_{u}^{-1}\frac{\partial g_{u}}{\partial u})
      =-\star_{u}^{-1}\frac{\partial\star_{u}}{\partial u}$, see \cite[Proposition 4.15]{Bismut-Zhang}, considered as a smooth family in $\Gamma(M,\End(\Lambda^{*}T^{*}M))$. Then,
     (\ref{equation_Proposition_infinitesimal_constant_term_in_asymptotic_expansion_Hermitian_absolute_boundary_conditions_3}) is
     the infinitesimal version of Br\"uning and Ma's formulas, see \cite[Theorem 4.6]{Bruening-Ma} and
     \cite[expressions (5.74) and (5.75)]{Bruening-Ma}. In the general case, take a symmetric $\xi\in\Gamma(M;\End(TM))$. Then, for
     $u$ small enough the formula $g_{u}:=g+ug\xi$ defines a smooth family of nondegenerate metrics on $TM$ and hence
     $g_{u}^{-1}\frac{\partial g_{u}}{\partial u}=g_{u}^{-1}g\xi$ a smooth family of symmetric elements in $\Gamma(M,\End(TM))$. Hence we obtain
     a smooth family of symmetric endomorphisms $-\star_{u}^{-1}\frac{\partial\star_{u}}{\partial u}$ in $\Gamma(M,\End(\Lambda^{*}T^{*}M))$, for which
     we can use again Br\"uning and Ma's formulas. In particular, they must hold for $u=0$ for which we have
     $g^{-1}_{0}(\frac{\partial g_{u}}{\partial u}|_{u=0})=\xi,$ so that
     $
     \Psi_{0}=\mathbf{D}^{*}(\xi)-\frac{1}{2}\Tr(\xi)=
     -\star^{-1}_{0}(\frac{\partial\star_{u}}{\partial u}|_{u=0}).
     $
     That is, (\ref{equation_Proposition_infinitesimal_constant_term_in_asymptotic_expansion_Hermitian_absolute_boundary_conditions_3}) holds.
     \end{proof}

 \begin{lem}
 \label{Lemma_relation_between the infinitesimal_constant_terms_in_asymptotic_expansions_Hermitian_relative_and absolute_boundary_conditions_total}
 Let $\bar{E}^{\prime}$ be the dual of the complex conjugated vector bundle of $E$, endowed with the dual flat connection and dual Hermitian form to those on $E$.
 Consider the compact Riemannian bordisms $(M,\emptyset,\partial M)$ together with its
 dual $(M,\emptyset,\partial M)^{\prime}:=(M,\partial M,\emptyset)$. Let $\Delta_{\mathsf{rel},h}$ be the
 $L^{2}$-realization associated to the Hermitian boundary value problem
 $[\Delta,\mathcal{B}]^{E,g,h}_{(M,\emptyset,\partial M)}$ and ${\Delta^{\prime}_{\mathsf{abs},h^{\prime}}}$ the one associated to
 $[\Delta,\mathcal{B}]^{\bar{E}^{\prime}\otimes\Theta_{M},g,h^{\prime}}_{(M,\emptyset,\partial M)^{\prime}}$.
 If $\phi$, $\xi$ and $\Psi$ are as in Proposition
 \ref{Proposition_infinitesimal_constant_term_in_asymptotic_expansion_Hermitian_absolute_boundary_conditions}, then
 \begin{equation}
 \label{Lemma_relation_between the infinitesimal_constant_terms_in_asymptotic_expansions_Hermitian_case_1_relative_and absolute_boundary_conditions}
    { }\LIM_{t\rightarrow 0} \left(\STr\left(\phi
    \exp(-t\Delta_{\mathsf{rel},h})\right)\right)=
    (-1)^{m}\LIM_{t\rightarrow 0} \left(\STr\left(\phi^{*}
    \exp{-t{\Delta^{\prime}_{\mathsf{abs},h^{\prime}}}}\right)\right),
 \end{equation}
 where $\phi^{*}:=h\phi h^{-1}$,
 and
 \begin{equation}
 \label{Lemma_relation_between the infinitesimal_constant_terms_in_asymptotic_expansions_Hermitian_case_2_relative_and absolute_boundary_conditions}
     { }\LIM_{t\rightarrow 0}\STr\left(\Psi
    \exp(-t\Delta_{\mathsf{rel},h})\right)=
    (-1)^{m+1}\LIM_{t\rightarrow 0}\STr\left(\Psi
    \exp(-t{\Delta^{\prime}_{\mathsf{abs},h^{\prime}}})\right).
 \end{equation}
\end{lem}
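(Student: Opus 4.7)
The approach is via Poincar\'e duality implemented by the Hermitian Hodge star $\star_h$, paralleling the discussion of the bilinear setting in Section \ref{Boundary_conditions_and_Poincae_duality}. Concretely, the anti-linear isomorphism $\star_h:\Omega^q(M;E)\to\Omega^{m-q}(M;\bar E'\otimes\Theta_M)$ satisfies $\star_h\Delta_{E,g,h}=\Delta_{\bar E'\otimes\Theta_M,g,h'}\star_h$, and, by the very same argument as in Section \ref{Boundary_conditions_and_Poincae_duality}, it sends relative boundary conditions on $\partial M$ for $E$-valued forms to absolute boundary conditions for $\bar E'\otimes\Theta_M$-valued forms on the dual bordism $(M,\partial M,\emptyset)$. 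This intertwining passes to the $L^2$-realizations and hence to the heat semigroups, yielding
\[
\star_h\, e^{-t\Delta_{\mathsf{rel},h}}=e^{-t\Delta'_{\mathsf{abs},h'}}\star_h.
\]

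Next, I would use the supertrace identity coming from the degree shift: since $\star_h$ sends $q$-forms to $(m-q)$-forms, for every trace-class operator $A$ on $L^2(M;E)$ one has $\STr(\star_h A\star_h^{-1})=(-1)^m\STr(A)$. Combined with the heat-semigroup intertwining above, this gives, for any $\phi\in\Gamma(M,\End(E))$ and any $\Psi\in\Gamma(M,\End(\Lambda^*T^*M))$,
\[
\STr\bigl(\phi\,e^{-t\Delta_{\mathsf{rel},h}}\bigr)=(-1)^m\STr\bigl((\star_h\phi\star_h^{-1})\,e^{-t\Delta'_{\mathsf{abs},h'}}\bigr),
\]
and similarly with $\phi$ replaced by $\Psi$. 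Taking $\LIM_{t\to 0}$ and identifying the conjugates $\star_h\phi\star_h^{-1}$ and $\star_h\Psi\star_h^{-1}$ will produce formulas
(\ref{Lemma_relation_between the infinitesimal_constant_terms_in_asymptotic_expansions_Hermitian_case_1_relative_and absolute_boundary_conditions})
and
(\ref{Lemma_relation_between the infinitesimal_constant_terms_in_asymptotic_expansions_Hermitian_case_2_relative_and absolute_boundary_conditions}).

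For the two conjugation computations: $\phi$ commutes with $\star_g$ because it acts trivially on the form factor, so $\star_h\phi\star_h^{-1}=h\phi h^{-1}=\phi^*$ as an endomorphism of $\bar E'\otimes\Theta_M$, giving the sign $(-1)^m$ of (\ref{Lemma_relation_between the infinitesimal_constant_terms_in_asymptotic_expansions_Hermitian_case_1_relative_and absolute_boundary_conditions}). For $\Psi=\mathbf{D}^*\xi-\tfrac12\Tr(\xi)$, which acts only on the form factor, the key algebraic fact on $\Lambda^*T^*M$ is
\[
\star_g(\mathbf{D}^*\xi)\star_g^{-1}=\Tr(\xi)\,\mathsf{Id}-\mathbf{D}^*\xi,
\]
a standard identity for the derivation extension of a $g$-symmetric endomorphism of $TM$ (it reduces, upon polarization, to the familiar calculation on the degree operator). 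Substituting yields $\star_h\Psi\star_h^{-1}=-\Psi$, and this additional minus sign promotes $(-1)^m$ to $(-1)^{m+1}$ in (\ref{Lemma_relation_between the infinitesimal_constant_terms_in_asymptotic_expansions_Hermitian_case_2_relative_and absolute_boundary_conditions}).

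The only delicate point is the boundary-condition intertwining in the first paragraph: one must verify that $\star_h$ actually induces a bijection between the $H_2$-closures of $\Omega(M;E)|^h_{\mathcal{B}_{\mathsf{rel}}}$ and $\Omega(M;\bar E'\otimes\Theta_M)|^{h'}_{\mathcal{B}_{\mathsf{abs}}}$, so that the heat-semigroup identity above makes sense on $L^2$. This is not difficult but requires repeating, for the anti-linear Hermitian star, the arguments given in Section \ref{Boundary_conditions_and_Poincae_duality} for the (linear) bilinear star; once it is in place, the proof reduces to the supertrace sign count and the two conjugation identities.
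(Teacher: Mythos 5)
Your proposal is correct and follows essentially the same route as the paper: intertwining $\Delta_{\mathsf{rel},h}$ with $\Delta^{\prime}_{\mathsf{abs},h^{\prime}}$ via $\star_{h}=\star\otimes h$, using the degree shift $q\mapsto m-q$ to produce the sign $(-1)^{m}$ in the supertrace, and the identity $\star_{g}(\mathbf{D}^{*}\xi)\star_{g}^{-1}=\Tr(\xi)\,\mathsf{Id}-\mathbf{D}^{*}\xi$ (which the paper verifies by an explicit eigenbasis computation) to upgrade the sign to $(-1)^{m+1}$ for $\Psi$. One small caveat: $\star_{h}$ viewed as a map into $\bar{E}^{\prime}\otimes\Theta_{M}$-valued forms is \emph{complex linear} (the conjugation is absorbed into $\bar{E}^{\prime}$), which is what makes the supertrace identity hold without an extra complex conjugation; calling it anti-linear, as you do, would otherwise spoil that step for non-selfadjoint $\phi$.
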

\begin{proof}
 Consider $h\in\Omega^{0}(M;\End(E,\bar{E}^{\prime}))$ the complex vector bundle isomorphism between
 $E$ and $\bar{E}^{\prime}$ provided by the Hermitian metric on $E$ (see for instance \cite[page 286]{Bott-Tu}),
 and its covariant derivative $\nabla^{E} h\in\Omega^{1}(M;\End(E,\bar{E}^{\prime}))$ computed by using the induced connection on $\End(E,\bar{E}^{\prime})$.
 With the Hermitian metric on $E$ and
 the Riemannian metric
 on $M$, we have a \emph{complex linear} isomorphism
 $\star_{h}:=\star\otimes h:\Omega(M;E)\rightarrow\Omega(M;\bar{E}^{\prime}\otimes\Theta_{M})$, which is used to define
 $$
 d^{*}_{E,g,h}:=(-1)^{q}\star_{h}^{-1}d_{\bar{E}^{\prime}\otimes\Theta_{M}}\star_{h}:
 \Omega^{q}(M;E)\rightarrow\Omega^{q-1}(M;E);
 $$
 being the formal adjoint to $\mathop{d_{E}}$ with respect to the Hermitian product on $\Omega(M;E)$.
 Remark here that
 the formula
 $$
 d_{\bar{E}^{\prime}\otimes\Theta_{M}}d^{*}_{\bar{E}^{\prime}\otimes\Theta_{M},g,h^{\prime}}\star_{h}=
 \star_{h}d^{*}_{E,g,h}\mathop{d_{E}}
 $$
 holds and therefore $$\star_{h}\Delta_{E,g,h}=
 \Delta_{\bar{E}^{\prime}\otimes\Theta_{M},g,h^{\prime}}\star_{h}.$$
  As in Section \ref{Boundary_conditions_and_Poincae_duality},
 the operator $\star_{h}$ intertwines $E$-valued forms satisfying
 relative (resp. absolute) boundary conditions with $\bar{E}^{\prime}$-valued forms
 satisfying absolute (resp. relative) boundary conditions. That is,
 \begin{equation}
 \label{equation_Laplacian_intertwining_conjugate_dual}
 \Delta_{\mathsf{rel},h}
 =\star_{h}^{-1}
 {\Delta^{\prime}_{\mathsf{abs},h^{\prime}}}\star_{h}
 \end{equation}
  and therefore
  $
  \phi \exp(-t\Delta_{\mathsf{rel},h})=
  \star_{h}^{-1}\phi^{*}\exp(-t
  {\Delta^{\prime}_{\mathsf{abs},h^{\prime}}})\star_{h},
  $
  where  $\phi^{*}:=h\phi h^{\prime}$.
 Thus, since the supertrace vanishes on supercommutators of graded complex-linear operators and
 the degree of $\star_{h,q}$ is $m-q$, we obtain the formula
  $$
  \STr(\phi \exp(-t\Delta_{\mathsf{rel},h}))=
  (-1)^{m}\STr(\phi^{*} \exp(-t
  {\Delta^{\prime}_{\mathsf{abs},h^{\prime}}}))
  $$
  and hence
 (\ref{Lemma_relation_between the infinitesimal_constant_terms_in_asymptotic_expansions_Hermitian_case_1_relative_and absolute_boundary_conditions}).
 We now turn to formula (\ref{Lemma_relation_between the infinitesimal_constant_terms_in_asymptotic_expansions_Hermitian_case_2_relative_and absolute_boundary_conditions}). First, remark that
 \begin{equation}
 \label{equation_2_in_the_proof_of_super_trace_relative_absolute_and_psi_on_the_exterior_algebra}
\begin{array}{lll}
 \star_{q}\left(\mathbf{D}^{*}\xi-\frac{1}{2}\Tr(\xi)\right)
 \star_{q}^{-1}=-\mathbf{D}^{*}\xi+\frac{1}{2}\Tr(\xi).
\end{array}
 \end{equation}
 We prove (\ref{equation_2_in_the_proof_of_super_trace_relative_absolute_and_psi_on_the_exterior_algebra}),
 by pointwise computing $\star_{q}\mathbf{D}^{*}\xi\star_{q}^{-1}$. 
 Since $\xi$ is a symmetric complex endomorphism of $T_{x}M$, we may choose
 an orthonormal frame $\{e_{i}\}_{1}^{m}$ such that $\xi e_{i}=\lambda_{i} e_{i}$. Then, for
 $\{e^{i_{1}}\wedge\cdots\wedge e^{i_{q}}\}_{1\leq i_{1}<\cdots < i_{q}\leq m}$
 a positive definite oriented frame for $\Lambda^{q}T^{*}_{x}M$, the Hodge $\star$-operator is given by
 $
 \star_{q}\left(e^{i_{1}}\wedge\cdots\wedge e^{i_{q}}\right)=e^{j_{1}}\wedge\cdots\wedge e^{j_{m-q}}\in \Lambda^{m-q}T^{*}_{x}M,
 $
 where the ordered indices  $(j_{1},\ldots,j_{m-q}):=(1,\ldots,\widehat{i_{1}},\ldots,\widehat{i_{q}},\ldots,m)$ with
 $1\leq j_{1}<\ldots<j_{m-q}\leq m$, are obtained as the unique possible choice of ordered indices complementary to
 $\leq i_{1}<\cdots < i_{q}$. Therefore
\begin{eqnarray}
  \nonumber\scriptstyle\star_{q}\mathbf{D}^{*}\xi
 \star_{q}^{-1}\left(e^{j_{1}}\wedge\cdots\wedge e^{j_{m-q}}\right)
 &=&\scriptstyle\star_{q}\mathbf{D}^{*}\xi\left(e^{i_{1}}\wedge\cdots\wedge e^{i_{q}}\right)\\
 &=&\nonumber \scriptstyle\star_{q}\sum_{l=1}^{q}\left(e^{i_{1}}\wedge\cdots\wedge\xi(e^{i_{l}})\wedge\cdots\wedge e^{i_{q}}\right)\\
 &=&\nonumber \scriptstyle\star_{q}\sum_{l=1}^{q}\lambda_{i_{l}}\left(e^{i_{1}}\wedge\cdots\wedge e^{i_{l}}\wedge\cdots\wedge e^{i_{q}}\right)\\
 &=&\nonumber \scriptstyle\sum_{l=1}^{q}\lambda_{i_{l}}\left(e^{j_{1}}\wedge\cdots\wedge e^{j_{m-q}}\right)\\
 &=&\nonumber \scriptstyle\sum_{l=1}^{m}\lambda_{i_{l}}\left(e^{j_{1}}\wedge\cdots\wedge e^{j_{m-q}}\right)				-
 \sum_{l=1}^{m-q}\lambda_{j_{l}}\left(e^{j_{1}}\wedge\cdots\wedge e^{j_{m-q}}\right)\\
 &=&\nonumber\scriptstyle\sum_{l=1}^{m}\lambda_{i_{l}}\left(e^{j_{1}}\wedge\cdots\wedge e^{j_{m-q}}\right)
-\sum_{l=1}^{m-q}\left(e^{j_{1}}\wedge\cdots\wedge \lambda_{j_{l}} e^{j_{l}} \wedge\cdots\wedge e^{j_{m-q}}\right)\\
 &=&\nonumber\scriptstyle\left(\Tr{\xi}-\mathbf{D}^{*}\xi\right)\left(e^{j_{1}}\wedge\cdots\wedge e^{j_{m-q}}\right)
\end{eqnarray}
 and we obtain (\ref{equation_2_in_the_proof_of_super_trace_relative_absolute_and_psi_on_the_exterior_algebra}), which in turn allows us to conclude
\begin{eqnarray}
 \nonumber\scriptstyle
 \Psi
 \left(\star_{q}\otimes h\right)^{-1}&=&\scriptstyle
 \left(\left(\mathbf{D}^{*}\xi-\frac{1}{2}\Tr(\xi)\right)\otimes 1\right)\left(\star_{q}\otimes h\right)^{-1}\\
 \label{equation_1_in_the_proof_of_super_trace_relative_absolute_and_psi_on_the_exterior_algebra}
 &=& \scriptstyle\left(\star_{q}\otimes h\right)^{-1}\left(\left(\star_{q}\left(\mathbf{D}^{*}\xi-\frac{1}{2}\Tr(\xi)\right)
 \star_{q}^{-1}\right)\otimes 1\right)\\
 &=&\nonumber\scriptstyle -\left(\star_{q}\otimes h\right)^{-1}\left(\left(\mathbf{D}^{*}\xi-\frac{1}{2}\Tr(\xi)\right)\otimes 1\right)\\
 &=&\nonumber\scriptstyle -\left(\star_{q}\otimes h\right)^{-1}\Psi.
\end{eqnarray}
 Finally, we use (\ref{equation_1_in_the_proof_of_super_trace_relative_absolute_and_psi_on_the_exterior_algebra}) to pass
 to the complex conjugated; hence with (\ref{equation_Laplacian_intertwining_conjugate_dual})
 and
 duality between these boundary value problems we obtain
 $$
  \Psi \exp\left(-t\Delta_{\mathsf{rel},h}\right)
   =\Psi \star_{h}^{-1}
   \exp\left(-t{\Delta^{\prime}_{\mathsf{abs},h^{\prime}}}\right)
   \star_{h}\\
   =- \star_{h}^{-1}\Psi
   \exp(-t{\Delta^{\prime}_{\mathsf{abs},h^{\prime}}})
   \star_{h}\\
 $$
thus, as for
(\ref{Lemma_relation_between the infinitesimal_constant_terms_in_asymptotic_expansions_Hermitian_case_1_relative_and absolute_boundary_conditions}),
we have
 $$
  \STr(\Psi \exp(-t\Delta_{\mathsf{rel},h}))
  =-(-1)^{m}\STr(\Psi \exp(-t
  {\Delta^{\prime}_{\mathsf{abs},h^{\prime}}}))
 $$
\end{proof}

\begin{prop}
\label{Proposition_infinitesimal_constant_term_in_asymptotic_expansion_Hermitian_case_relative_boundary_conditions}
 For the  Riemannian bordism $(M,\emptyset,\partial M)$, consider the Hermitian boundary value problem
 $[\Delta,\mathcal{B}]^{E,g,h}_{(M,\emptyset,\partial M)}$ with its $L^{2}$-realization denoted by
 $\Delta_{\mathsf{rel},h}$.
 If $\phi$, $\xi$ and $\Psi$ are as in Proposition
 \ref{Proposition_infinitesimal_constant_term_in_asymptotic_expansion_Hermitian_absolute_boundary_conditions}, then
\begin{eqnarray}
  \nonumber\scriptstyle\LIM_{t\rightarrow 0} \left(\STr\left(\phi
  \exp(-t\Delta_{\mathsf{rel},h})\right)\right)=
  \int_{M}\Tr(\phi)
  \mathbf{e}(M,g)-
  \int_{\partial M}i^{*}\Tr(\phi)
  \mathbf{e_{b}}(\partial M,g).
\end{eqnarray}
 and
\begin{eqnarray}
 \nonumber\scriptstyle\LIM_{t\rightarrow 0} \left(\STr\left(-\Psi
 \exp(-t\Delta_{\mathsf{rel},h})\right)\right)
&=&\scriptstyle-2\int_{M}\left.\frac{\partial}{\partial\tau}\right|_{\tau=0
 }\mathbf{\widetilde{e}}(M,g,g+\tau g\xi)
 \wedge \omega(\nabla^{E},h)\\
&&\nonumber\scriptstyle+2(-1)^{m+1}\int_{\partial M}\left.\frac{\partial}{\partial\tau}\right|_{\tau=0
 }\mathbf{\widetilde{e}_{b}}(\partial M,g,g+\tau g\xi)
 \wedge i^{*}\omega(\nabla^{E},h)\\
&&\nonumber\scriptstyle
 +(-1)^{m+1}\mathsf{rank}(E)\int_{\partial M}\left.\frac{\partial}{\partial\tau}\right|_{\tau=0}
B(\partial M,g+\tau g\xi)\nonumber.
\end{eqnarray}
\end{prop}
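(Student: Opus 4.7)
The strategy is to deduce the statement from Proposition \ref{Proposition_infinitesimal_constant_term_in_asymptotic_expansion_Hermitian_absolute_boundary_conditions} via Lemma \ref{Lemma_relation_between the infinitesimal_constant_terms_in_asymptotic_expansions_Hermitian_relative_and absolute_boundary_conditions_total}. The Poincar\'e duality identities (\ref{Lemma_relation_between the infinitesimal_constant_terms_in_asymptotic_expansions_Hermitian_case_1_relative_and absolute_boundary_conditions}) and (\ref{Lemma_relation_between the infinitesimal_constant_terms_in_asymptotic_expansions_Hermitian_case_2_relative_and absolute_boundary_conditions}) rewrite each $\LIM$ for the relative problem $[\Delta,\mathcal{B}]^{E,g,h}_{(M,\emptyset,\partial M)}$ as a sign $\pm(-1)^{m}$ times the analogous $\LIM$ for the absolute problem $[\Delta,\mathcal{B}]^{\bar{E}^{\prime}\otimes\Theta_{M},g,h^{\prime}}_{(M,\partial M,\emptyset)}$ on the dual bordism, where Br\"uning--Ma's result applies directly.

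For the first identity, I would substitute (\ref{Lemma_relation_between the infinitesimal_constant_terms_in_asymptotic_expansions_Hermitian_case_1_relative_and absolute_boundary_conditions}) into (\ref{equation_Proposition_infinitesimal_constant_term_in_asymptotic_expansion_Hermitian_absolute_boundary_conditions_1}) applied to the dual problem, and use that $\Tr(\phi^{*})=\Tr(h\phi h^{-1})=\Tr(\phi)$ pointwise. The prefactor $(-1)^{m}$ from the Lemma neutralizes on both sides: on the boundary integral it cancels the $-(-1)^{m}$ produced by Br\"uning--Ma, and on the bulk integral it is immaterial because the Euler form $\mathbf{e}(M,g)$ vanishes identically for $m$ odd (the Pfaffian being trivial in odd rank). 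Collecting signs yields the first claimed formula.

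For the second identity, I would combine (\ref{Lemma_relation_between the infinitesimal_constant_terms_in_asymptotic_expansions_Hermitian_case_2_relative_and absolute_boundary_conditions}) with (\ref{equation_Proposition_infinitesimal_constant_term_in_asymptotic_expansion_Hermitian_absolute_boundary_conditions_3}). The crucial input is the behaviour of the Kamber--Tondeur form under the passage from $E$ to $\bar{E}^{\prime}\otimes\Theta_{M}$. Using additivity of $\omega$ under tensor products, that the natural metric on $\Theta_{M}$ is parallel so that $\omega(\nabla^{\Theta_{M}},g)=0$, the sign flip $\omega(\nabla^{E^{*}},h^{*})=-\omega(\nabla^{E},h)$ under dualization, and reality of $\omega$ (so complex conjugation is trivial), one obtains
$$
\omega\bigl(\nabla^{\bar{E}^{\prime}\otimes\Theta_{M}},h^{\prime}\bigr)=-\omega(\nabla^{E},h).
$$
Plugging this in, using $\mathsf{rank}(\bar{E}^{\prime}\otimes\Theta_{M})=\mathsf{rank}(E)$, and again absorbing $(-1)^{m+1}$ on the bulk term via the vanishing of $\widetilde{\mathbf{e}}(M,g,g+\tau g\xi)$ for $m$ odd, produces the stated formulas with precisely the signs advertised on the boundary contributions.

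The main obstacle is the sign bookkeeping, in particular establishing $\omega(\nabla^{\bar{E}^{\prime}\otimes\Theta_{M}},h^{\prime})=-\omega(\nabla^{E},h)$ rigorously; once this identity is in hand the remainder is a formal manipulation. A minor point worth an explicit check is the vanishing of $\mathbf{e}(M,g)$ and $\widetilde{\mathbf{e}}(M,g,g^{\prime})$ in odd dimensions, which is immediate from their Pfaffian-based construction recalled in the Appendix.
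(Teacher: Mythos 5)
Your proposal is correct and follows essentially the same route as the paper: reduce the relative problem to the absolute one on the dual bundle $\bar{E}^{\prime}\otimes\Theta_{M}$ via Lemma \ref{Lemma_relation_between the infinitesimal_constant_terms_in_asymptotic_expansions_Hermitian_relative_and absolute_boundary_conditions_total}, then apply Proposition \ref{Proposition_infinitesimal_constant_term_in_asymptotic_expansion_Hermitian_absolute_boundary_conditions} together with $\Tr(\phi^{*})=\Tr(\phi)$ and $\omega(\nabla^{E},h)=-\omega(\nabla^{E^{\prime}},h^{\prime})$. Your explicit observation that the residual $(-1)^{m}$ on the bulk terms is harmless because $\mathbf{e}(M,g)$ and $\mathbf{\widetilde{e}}(M,g,g^{\prime})$ vanish for $m$ odd is a detail the paper leaves implicit, and it is exactly the right justification.
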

\begin{proof}
A form $w\in\Omega^{*}(M;E)$ satisfies relative boundary conditions if and only if the smooth form
$\star_{h}w\in\Omega^{m-*}(M;\bar{E}^{\prime}\otimes\Theta_{M})$ satisfies absolute boundary conditions
on $\partial M$.
Hence, the first formula in the statement follows from
formula (\ref{Lemma_relation_between the infinitesimal_constant_terms_in_asymptotic_expansions_Hermitian_case_1_relative_and absolute_boundary_conditions}) in
Lemma
\ref{Lemma_relation_between the infinitesimal_constant_terms_in_asymptotic_expansions_Hermitian_relative_and absolute_boundary_conditions_total},
and the results from Br\"uning and Ma for the Hermitian Laplacian stated
in Proposition \ref{Proposition_infinitesimal_constant_term_in_asymptotic_expansion_Hermitian_absolute_boundary_conditions}. The second
formula follows from Lemma
formula (\ref{Lemma_relation_between the infinitesimal_constant_terms_in_asymptotic_expansions_Hermitian_case_2_relative_and absolute_boundary_conditions}) in
\ref{Lemma_relation_between the infinitesimal_constant_terms_in_asymptotic_expansions_Hermitian_relative_and absolute_boundary_conditions_total},
Proposition \ref{Proposition_infinitesimal_constant_term_in_asymptotic_expansion_Hermitian_absolute_boundary_conditions}
and
$\omega(\nabla^{E},h)=-\omega(\nabla^{E^{\prime}},h^{\prime})$, see for instance
\cite[Section 2.4]{Burghelea-Haller}.
\end{proof}

\begin{lem}
\label{Lemma_computation_of_Heat_kernel_asymptotics_of_total_boundary_problem_Reduced_to_small_parts}
For $(M,\partial M,\emptyset)$, $(M,\emptyset,\partial M)$ and $(M,\partial_{+}M,\partial_{-}M)$ let us
consider
 $[\Delta,\mathcal{B}]^{E,g,h}_{(M,\partial M,\emptyset)}$, $[\Delta,\mathcal{B}]^{E,g,h}_{(M,\emptyset,\partial M)}$
and $[\Delta,\mathcal{B}]^{E,g,h}_{(M,\partial_{+}M,\partial_{-}M)}$ the corresponding
Hermitian boundary value problems, together with their $L^{2}$-realizations
$\Delta_{\mathsf{abs},h}$, $\Delta_{\mathsf{rel},h}$ and $\Delta_{\mathcal{B},h}$, respectively.
Let $\psi_{\pm}\in\Gamma(M;\End(\Lambda^{*}(T^{*}M)\otimes E))$ be chosen
in such a way that $\mathsf{supp}(\psi_{\pm})\cap\partial_{\mp}M=\emptyset$, then
$$\begin{array}{l}
 { }\LIM_{t\rightarrow 0}(\STr(\psi_{+}
\exp(-t\Delta_{\mathcal{B},h})))
=\LIM_{t\rightarrow 0}(\STr(\psi_{+}
\exp(-t\Delta_{\mathsf{abs},h}))),\\ \\
{ }\LIM_{t\rightarrow 0}(\STr(\psi_{-}
\exp(-t\Delta_{\mathcal{B},h})))
=\LIM_{t\rightarrow 0}(\STr(\psi_{-}
\exp(-t\Delta_{\mathsf{rel},h}))).
\end{array}
$$
\end{lem}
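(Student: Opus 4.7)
The plan is to exploit the locality of the heat-trace asymptotic coefficients recalled in formula (\ref{equation_coefficients_asymptotic_expansion}). The invariants $\mathfrak{e}_{n}(x,\mathsf{D})$ and $\mathfrak{e}_{n,k}(y,\mathsf{D},\mathcal{B})$ are polynomials in the jets of the symbols of $\mathsf{D}$ and $\mathcal{B}$ at $x\in M$ and $y\in\partial M$; in particular, $\mathfrak{e}_{n,k}(y,\mathsf{D},\mathcal{B})$ depends only on the germ of $\mathcal{B}$ at the single point $y$. Everything else is a bookkeeping exercise around (\ref{equation_coefficients_asymptotic_expansion}) with $n=m$.

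First I would specialise (\ref{equation_coefficients_asymptotic_expansion}) with $n=m$ to both $\Delta_{\mathcal{B},h}$ and $\Delta_{\mathsf{abs},h}$, acting on $\psi_{+}$. The interior integral $\int_{M}\Tr(\psi_{+}\,\mathfrak{e}_{m}(\Delta_{E,g,h}))\mathsf{vol}_{g}(M)$ is identical in the two cases, since $\mathfrak{e}_{m}(\cdot,\Delta_{E,g,h})$ depends only on the symbol of the Hermitian Laplacian and is insensitive to the choice of boundary conditions. Next I would split the boundary sum as $\int_{\partial M}=\int_{\partial_{+}M}+\int_{\partial_{-}M}$. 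Because $\mathsf{supp}(\psi_{+})\cap\partial_{-}M=\emptyset$, both $\psi_{+}$ and all of its normal derivatives $\nabla_{\varsigma_{\mathsf{in}}}^{k}\psi_{+}$ vanish on $\partial_{-}M$, killing the $\partial_{-}M$-contribution for both boundary value problems. On $\partial_{+}M$ the boundary operators for $[\Delta,\mathcal{B}]^{E,g,h}_{(M,\partial_{+}M,\partial_{-}M)}$ and $[\Delta,\mathcal{B}]^{E,g,h}_{(M,\partial M,\emptyset)}$ coincide pointwise (both implement absolute boundary conditions), so by locality the germs agree and consequently $\mathfrak{e}_{m,k}(y,\Delta_{E,g,h},\mathcal{B})$ matches at every $y\in\partial_{+}M$. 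Hence the surviving boundary integrals are equal and the first identity follows.

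The second identity is obtained by the symmetric argument applied to $\psi_{-}$: normal derivatives of $\psi_{-}$ vanish on $\partial_{+}M$ by the support hypothesis, while on $\partial_{-}M$ both $[\Delta,\mathcal{B}]^{E,g,h}_{(M,\partial_{+}M,\partial_{-}M)}$ and $[\Delta,\mathcal{B}]^{E,g,h}_{(M,\emptyset,\partial M)}$ prescribe exactly the same relative boundary conditions, so the associated boundary invariants $\mathfrak{e}_{m,k}$ coincide pointwise. No genuine obstacle is expected; the one substantive ingredient is the classical locality of $\mathfrak{e}_{n,k}$ at boundary points, for which it suffices to appeal to \cite{Greiner}, \cite{Seeley}, \cite{Seeleya} and \cite{Seeleyb} (as already invoked in Subsection \ref{subsection_Heat trace asymptotics for the heat kernel of a Laplace type operator under local boundary conditions}).
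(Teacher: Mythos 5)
Your argument is correct and is essentially the paper's own proof: the paper disposes of the lemma in one sentence by appealing to the disjointness of $\partial_{+}M$ and $\partial_{-}M$ and the local computability of the coefficients in (\ref{equation_coefficients_asymptotic_expansion}), which is exactly the bookkeeping you carry out explicitly. Your additional observations (the interior invariant $\mathfrak{e}_{m}$ is independent of $\mathcal{B}$, and the support hypothesis kills all normal jets of $\psi_{\pm}$ on $\partial_{\mp}M$) are the correct details behind that one sentence.
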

\begin{proof}
This is a immediate consequence of $\partial_{+}M$ and
$\partial_{-}M$ being mutually disjoint and that the coefficients in the heat kernel asymptotic expansion are computable
as universal polynomials in terms of finite order derivatives of the symbols expressed in local coordinates around each point of $M$, see
Section \ref{subsection_Heat trace asymptotics for the heat kernel of a Laplace type operator under local boundary conditions}.
\end{proof}

\begin{thm}
\label{Theorem infinitesimal_constant_terms_in_asymptotic_expansions_Hermitian_case_relative_and absolute_boundary_conditions}
For $(M,\partial_{+}M,\partial_{-}M)$, consider the Hermitian boundary value problem
$[\Delta,\mathcal{B}]^{E,g,h}_{(M,\partial_{+}M,\partial_{-}M)}$
with its corresponding $L^{2}$-realization $\Delta_{\mathcal{B},h}$. If
$\phi$, $\xi$ and $\Psi$ are as in Proposition
\ref{Proposition_infinitesimal_constant_term_in_asymptotic_expansion_Hermitian_absolute_boundary_conditions}, then
\begin{eqnarray}
  \nonumber\scriptstyle
  \LIM_{t\rightarrow 0} \left(\STr\left(\phi
  \exp(-t\Delta_{\mathcal{B},h})\right)\right)&=&\scriptstyle\int_{M}\Tr(\phi)\mathbf{e}(M,g)
  +(-1)^{m-1}\int_{\partial_{+} M}\Tr(\phi) i^{*}_{+}\mathbf{e_{b}}(\partial M,g)\\
 &&\nonumber\scriptstyle-\int_{\partial_{-} M}\Tr(\phi)
  i^{*}_{-}\mathbf{e_{b}}(\partial M,g).
\end{eqnarray}
and
\begin{eqnarray}
  \nonumber\scriptstyle
    { }\LIM_{t\rightarrow 0} \left(\STr\left(-\Psi
   \exp(-t\Delta_{\mathcal{B},h})\right)\right)&=&\scriptstyle-2\int_{M}
    \left.\frac{\partial}{\partial\tau}\right|_{\tau=0
    }\mathbf{\widetilde{e}}(M,g,g+\tau g\xi)\wedge
   \omega(\nabla^{E},h)\\
&&\nonumber\scriptstyle-2\int_{\partial_{+}M}
    \left.\frac{\partial}{\partial\tau}\right|_{\tau=0
    }i^{*}_{+}\mathbf{\widetilde{e}_{b}}(\partial M,g,g+\tau g\xi)\wedge
    \omega(\nabla^{E},h)\\
			&&\nonumber\scriptstyle+\mathsf{rank}(E)\int_{\partial_{+} M}\left.\frac{\partial}{\partial\tau}\right|_{\tau=0}
			i^{*}_{+}B(\partial M,g+\tau g\xi)\\
&&\nonumber\scriptstyle-2(-1)^{m}\int_{\partial_{-}M}
   \left.\frac{\partial}{\partial\tau}\right|_{\tau=0
   }i^{*}_{-}\mathbf{\widetilde{e}_{b}}(\partial M,g,g+\tau g\xi)\wedge
  \omega(\nabla^{E},h)\\
			&&\nonumber\scriptstyle+(-1)^{m+1}\mathsf{rank}(E)\int_{\partial_{-} M}\left.\frac{\partial}{\partial\tau}\right|_{\tau=0}
			i^{*}_{-}B(\partial M,g+\tau g\xi).\nonumber
\end{eqnarray}
\end{thm}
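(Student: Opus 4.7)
The plan is to reduce both formulas to the pure absolute and pure relative cases already established in Proposition \ref{Proposition_infinitesimal_constant_term_in_asymptotic_expansion_Hermitian_absolute_boundary_conditions} (due to Br\"uning--Ma) and Proposition \ref{Proposition_infinitesimal_constant_term_in_asymptotic_expansion_Hermitian_case_relative_boundary_conditions}, by means of the locality of the heat trace asymptotics recorded in Lemma \ref{Lemma_computation_of_Heat_kernel_asymptotics_of_total_boundary_problem_Reduced_to_small_parts}. Since $\partial_{+}M$ and $\partial_{-}M$ are disjoint closed submanifolds of $M$, one can choose a smooth partition of unity $\{\chi_{+},\chi_{-}\}$ on $M$ with $\chi_{+}+\chi_{-}=1$ and $\mathsf{supp}(\chi_{\pm})\cap\partial_{\mp}M=\emptyset$.

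For the first formula I would write
\[
\LIM_{t\to 0}\STr(\phi\, e^{-t\Delta_{\mathcal{B},h}})=\LIM_{t\to 0}\STr(\chi_{+}\phi\, e^{-t\Delta_{\mathcal{B},h}})+\LIM_{t\to 0}\STr(\chi_{-}\phi\, e^{-t\Delta_{\mathcal{B},h}}).
\]
By Lemma \ref{Lemma_computation_of_Heat_kernel_asymptotics_of_total_boundary_problem_Reduced_to_small_parts} these two summands equal the same expressions with $\Delta_{\mathcal{B},h}$ replaced by $\Delta_{\mathsf{abs},h}$ and $\Delta_{\mathsf{rel},h}$ respectively. Proposition \ref{Proposition_infinitesimal_constant_term_in_asymptotic_expansion_Hermitian_absolute_boundary_conditions} applied to $\chi_{+}\phi$ and Proposition \ref{Proposition_infinitesimal_constant_term_in_asymptotic_expansion_Hermitian_case_relative_boundary_conditions} applied to $\chi_{-}\phi$ then localize each boundary integral, since $\chi_{\pm}$ vanishes on $\partial_{\mp}M$. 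Adding and using $\chi_{+}+\chi_{-}=1$ in the interior, the sum produces $\int_{M}\Tr(\phi)\,\mathbf{e}(M,g)$ together with the absolute contribution $-(-1)^{m}\int_{\partial_{+}M}i^{*}_{+}\Tr(\phi)\,\mathbf{e_{b}}(\partial M,g)=(-1)^{m-1}\int_{\partial_{+}M}i^{*}_{+}\Tr(\phi)\,\mathbf{e_{b}}(\partial M,g)$ and the relative contribution $-\int_{\partial_{-}M}i^{*}_{-}\Tr(\phi)\,\mathbf{e_{b}}(\partial M,g)$, as required.

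For the second formula the same strategy applies, this time at the level of the symmetric endomorphism $\xi=\chi_{+}\xi+\chi_{-}\xi$. Because the map $\xi\mapsto\Psi=\mathbf{D}^{*}\xi-\tfrac{1}{2}\Tr(\xi)$ is $C^{\infty}(M)$-linear and pointwise in $\xi$, one has $\Psi=\chi_{+}\Psi+\chi_{-}\Psi$, and each summand equals the $\Psi$ attached to $\chi_{\pm}\xi$. Applying Lemma \ref{Lemma_computation_of_Heat_kernel_asymptotics_of_total_boundary_problem_Reduced_to_small_parts} together with Propositions \ref{Proposition_infinitesimal_constant_term_in_asymptotic_expansion_Hermitian_absolute_boundary_conditions} and \ref{Proposition_infinitesimal_constant_term_in_asymptotic_expansion_Hermitian_case_relative_boundary_conditions} to the two pieces, the right-hand side becomes a sum of integrals involving the transgression forms $\mathbf{\widetilde{e}}(M,g,g+\tau g(\chi_{\pm}\xi))$, $\mathbf{\widetilde{e_{b}}}(\partial M,g,g+\tau g(\chi_{\pm}\xi))$ and $B(\partial M,g+\tau g(\chi_{\pm}\xi))$. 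The key observation is that the $\tau$-derivatives at $\tau=0$ of these forms depend linearly and pointwise on the infinitesimal variation $g\xi$ (as follows from their construction via integration along the path $g+\tau g\xi$ recalled in the Appendix), so replacing $\xi$ by $\chi_{\pm}\xi$ simply multiplies these derivatives by $\chi_{\pm}$. Summing the two pieces then reassembles the interior integral through $\chi_{+}+\chi_{-}=1$ and discards the contribution of $\chi_{\pm}$ on $\partial_{\mp}M$, leaving exactly the expression in the statement.

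The bulk of the proof is thus a partition-of-unity reduction, and the only nontrivial step is to verify that $\partial_{\tau}|_{\tau=0}$ of each transgression form is $C^{\infty}(M)$-linear in $\xi$; this is immediate from the transgression formulas recalled in the Appendix, since the derivative at $\tau=0$ of an integral over $\tau\in[0,1]$ reduces to an algebraic evaluation at $\tau=0$ depending pointwise and linearly on the derivative of the path of metrics. Once this linearity is granted, tracking the signs $-(-1)^{m}=(-1)^{m-1}$ produced on $\partial_{+}M$ by the absolute case, and the $(-1)^{m+1}=-(-1)^{m}$ produced on $\partial_{-}M$ by Proposition \ref{Proposition_infinitesimal_constant_term_in_asymptotic_expansion_Hermitian_case_relative_boundary_conditions}, yields the stated formulas.
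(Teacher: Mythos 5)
Your proof follows essentially the same route as the paper: the paper's own argument is precisely the combination of Proposition \ref{Proposition_infinitesimal_constant_term_in_asymptotic_expansion_Hermitian_absolute_boundary_conditions}, Proposition \ref{Proposition_infinitesimal_constant_term_in_asymptotic_expansion_Hermitian_case_relative_boundary_conditions} and the locality statement of Lemma \ref{Lemma_computation_of_Heat_kernel_asymptotics_of_total_boundary_problem_Reduced_to_small_parts}, and your partition of unity merely makes that reduction explicit.

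One justification in the second half is imprecise, though the conclusion you draw from it is correct. The infinitesimal transgressions $\left.\frac{\partial}{\partial\tau}\right|_{\tau=0}\mathbf{\widetilde{e}}(M,g,g+\tau g\xi)$, $\left.\frac{\partial}{\partial\tau}\right|_{\tau=0}\mathbf{\widetilde{e}_{b}}(\partial M,g,g+\tau g\xi)$ and $\left.\frac{\partial}{\partial\tau}\right|_{\tau=0}B(\partial M,g+\tau g\xi)$ are \emph{not} $C^{\infty}(M)$-linear (pointwise) in $\xi$: they involve the variation of the Levi--Civit\`a connection and hence first derivatives of $g\xi$, so replacing $\xi$ by $\chi_{\pm}\xi$ does not simply multiply them by $\chi_{\pm}$. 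What your argument actually needs, and what does hold, is weaker: (a) $\R$-linearity (additivity) of these first variations in $\xi$, which reassembles the interior integral from $\chi_{+}\xi+\chi_{-}\xi=\xi$; and (b) locality, namely that these forms at a point depend only on the germ of the metric path there, so that on a neighborhood of $\partial_{\pm}M$ where $\chi_{\pm}\equiv 1$ the forms built from $\chi_{\pm}\xi$ coincide with those built from $\xi$. With the justification corrected in this way, your proof is sound.
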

\begin{proof}
This follows from
Proposition \ref{Proposition_infinitesimal_constant_term_in_asymptotic_expansion_Hermitian_absolute_boundary_conditions} (Br\"uning and Ma),
Proposition
\ref{Proposition_infinitesimal_constant_term_in_asymptotic_expansion_Hermitian_case_relative_boundary_conditions}
and Lemma
\ref{Lemma_computation_of_Heat_kernel_asymptotics_of_total_boundary_problem_Reduced_to_small_parts}.
More recently, Br\"uning and Ma gave also a proof of this
statement, see  \cite[Theorem 3.2]{Bruening-Ma2}, based on the methods developed in \cite{Bruening-Ma}.
\end{proof}

\subsection{Involutions, bilinear and Hermitian forms}
\label{Section_Involutions, bilinear and Hermitian forms}
We fix a Hermitian structure compatible with the bilinear one as follows.
Since $E$ is endowed with a bilinear
form $b$,
there exists an anti-linear involution $\nu$ on $E$
satisfying
\begin{equation}
 \label{conditions_for_involution}
  \overline{b(\nu e_{1},\nu e_{2})}=b(e_{1},e_{2})\text{
 and }b(\nu e,e)>0\text{ for all  }e_{1},e_{2}, e\in E\text{ with }e\not=0,
\end{equation}
see for instance the proof of
\cite[Theorem 5.10]{Burghelea-Haller}. In this way, we obtain a (positive definite) Hermitian form on $E$ given by
\begin{equation}
 \label{compatible_hermitian_form}
  h(e_{1},e_{2}):=b(e_{1},\nu e_{2}).
\end{equation}
Remark that $\nabla^{E}\nu=0$ is not required so that
$$h^{-1}(\left.\nabla^{E}\right. h)=
\nu^{-1}\left(b^{-1}(\left.\nabla^{E}\right. b)\right)\nu+\nu^{-1}(\left.\nabla^{E}\right.\nu).$$
Therefore, this yields
a Hermitian form on $\Omega(M;E)$ compatible with $\beta_{g,b}$ in the sense that
$ \ll  v, w\gg_{g,h}=\beta_{g,b}(v, \nu w).$
 for $v,w\in\Omega(M;E)$.
 In \cite{Su-Zhang08} and \cite{Su1}, given a bilinear form $b$,
 this involution has been exploited to
 study the bilinear Laplacian in terms of the Hermitian one associated to the compatible Hermitian form in (\ref{compatible_hermitian_form}), in
 both cases with and without boundary. However, our approach is a little different since
 we do not use a Hermitian form globally compatible with $\beta_{g,b}$ on $\Omega(M;E)$, but instead a local compatibility only,
 see section
 \ref{subsection_Constant term in the heat trace asymptotic expansions for boundary problems} below.

 We now study the situation where $\nu$ is parallel with respect to $\nabla^{E}$.

\begin{lem}
\label{lemma_parallel_involution_implies_d_adjoint_equals_d_transpose}
 Let us consider $(M,\partial_{+}M,\partial_{-}M)$ the compact Riemannian bordism together with
 the complex flat vector bundle $E$ as above. Suppose $E$ admits a nondegenerate symmetric bilinear form.
 Moreover, suppose there exists a complex anti-linear involution $\nu$ on $E$,
 satisfying the conditions in (\ref{conditions_for_involution}) and $\nabla^{E}\nu=0$.
 Let $h$ be the (positive definite) Hermitian form on $E$ compatible with $b$ defined by
(\ref{compatible_hermitian_form}). Then,
 $\Delta_{E,g,b}=\Delta_{E,g,h}$ and $\mathcal{B}_{E,g,b}=\mathcal{B}_{E,g,h}.$
 \end{lem}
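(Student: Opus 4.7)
The plan is to reduce both assertions to the single identity $d^{\sharp}_{E,g,b}=d^{*}_{E,g,h}$ on smooth forms, and to deduce this identity from a relation between $\star_{b}$ and $\star_{h}$ that is controlled by $\nu$. Once the codifferentials coincide, equality of the Laplacians is immediate from the definition \eqref{formula_definition_Laplacian}, and equality of the boundary operators follows by inspection of \eqref{definition_relative_Boundary_operator_on_smooth forms}: indeed $\mathcal{B}_{-}^{0}$ is independent of any metric, $\mathcal{B}_{-}^{1}$ depends only on $d^{\sharp}_{E,g,b}$, and the absolute parts $\mathcal{B}_{+}^{0},\mathcal{B}_{+}^{1}$ involve $\star_{b}$ and $d^{\sharp}_{E^{\prime}\otimes\Theta_{M},g,b^{\prime}}\star_{b}$, for which the same relation carries over by naturality of the construction on the dual bundle.

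First I would spell out, from $h(e_{1},e_{2})=b(e_{1},\nu e_{2})$, the bundle level formula $\star_{h}=(\bar\nu^{*}\otimes\mathsf{id}_{\Theta_{M}})\circ\star_{b}$ on $\Omega^{q}(M;E)$, where $\bar\nu^{*}:E^{\prime}\to \bar{E}^{\prime}$ is the complex linear isomorphism induced by viewing the antilinear involution $\nu$ as a complex linear map $\bar{E}\to E$ and dualizing. This is the only place where the antilinearity of $\nu$ enters, and it is essentially a fiberwise manipulation. Next, I use $\nabla^{E}\nu=0$, which passes to $\nabla^{E^{\prime}}\bar\nu^{*}=0$, so that the tensor $\bar\nu^{*}\otimes\mathsf{id}_{\Theta_{M}}$ is parallel and intertwines the de-Rham differentials
\begin{equation*}
d_{\bar{E}^{\prime}\otimes\Theta_{M}}\circ(\bar\nu^{*}\otimes\mathsf{id}_{\Theta_{M}})
=(\bar\nu^{*}\otimes\mathsf{id}_{\Theta_{M}})\circ d_{E^{\prime}\otimes\Theta_{M}}.
\end{equation*}
Substituting this into the Hermitian analogue of \eqref{formula_definition_d_transpose}, namely $d^{*}_{E,g,h}=(-1)^{q}\star_{h}^{-1}d_{\bar{E}^{\prime}\otimes\Theta_{M},m-q}\star_{h}$, the factors $\bar\nu^{*}$ and $(\bar\nu^{*})^{-1}$ cancel and I obtain $d^{*}_{E,g,h}=d^{\sharp}_{E,g,b}$.

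With the codifferentials identified, $\Delta_{E,g,b}=dd^{\sharp}+d^{\sharp}d=dd^{*}+d^{*}d=\Delta_{E,g,h}$. For the boundary operators, $\mathcal{B}^{0}_{-}$ and $\mathcal{B}^{1}_{-}$ match term by term; for the absolute part I use the same $\star_{h}=(\bar\nu^{*}\otimes\mathsf{id})\star_{b}$ together with the fact that $\bar\nu^{*}$ restricts to a parallel bundle isomorphism over $\partial M$, so the composition $(\star^{\partial M}_{h})^{-1}(i^{*}_{+}\star_{h}\,\cdot\,)$ reduces to $(\star^{\partial M}_{b})^{-1}(i^{*}_{+}\star_{b}\,\cdot\,)$, and similarly for the second component using the identification of the codifferentials on the dual bundle.

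The main obstacle I expect is bookkeeping the interplay between the complex linearity of $b$ and the complex antilinearity of $\nu$ (and hence of $h$), specifically verifying carefully that $\bar\nu^{*}:E^{\prime}\to\bar{E}^{\prime}$ is indeed \emph{complex linear} once one has passed to the bar-bundle, so that both $\star_{b}$ and $\star_{h}$ can be compared as complex linear maps with values in different but canonically identified bundles. Everything else is a routine chase once this algebraic identification is in place, and the hypothesis $\nabla^{E}\nu=0$ is used exactly once, to pull $\bar\nu^{*}\otimes\mathsf{id}_{\Theta_{M}}$ through the de-Rham differential.
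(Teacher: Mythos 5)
Your proposal is correct and follows essentially the same route as the paper: factor $\star_{h}$ as $\star_{b}$ composed with the parallel isomorphism induced by $\nu$, use $\nabla^{E}\nu=0$ to commute that isomorphism past $d_{E}$, and cancel to get $d^{*}_{E,g,h}=d^{\sharp}_{E,g,b}$, from which the Laplacians and the relative boundary operators coincide. The only cosmetic difference is in the absolute part: you cancel the parallel factor directly inside $(\star^{\partial M}_{h})^{-1}i^{*}_{+}\star_{h}$, whereas the paper rewrites $\mathcal{B}_{+}$ intrinsically via the interior product $\iota_{\varsigma_{\mathsf{in}}}$ (using $\star^{\partial M}_{b}i^{*}\iota_{\varsigma_{\mathsf{in}}}v=i^{*}\star^{M}_{b}v$) to show it is independent of $b$ or $h$ altogether; both arguments are valid.
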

\begin{proof}
 Consider $\ll  \cdot, \cdot\gg_{g,h}$ the Hermitian product on $\Omega(M;E)$, compatible with the bilinear form, and
 $d^{*}_{E,g,h}$, the formal adjoint to $\mathop{d_{E}}$ with respect to this product, which
 in terms of the Hodge $\star$-operator can be written up to a sign as
 $d^{*}_{E,g,h}=\pm\star^{-1}_{h}\mathop{d_{E}}\star_{h}$. Remark that
 $\nabla^{E}\nu=0$ implies that $\mathop{d_{E}}\nu=\nu \mathop{d_{E}}$; hence, with $\star_{h}=\nu\circ\star_{b}$, we have
 \begin{equation}
 \label{equation_d_star_h_equals_d_star_b}
 d^{*}_{E,g,h}=\pm\star^{-1}_{h}\mathop{d_{E}}\star_{h}
              =\pm\star^{-1}_{b}\nu^{-1} \mathop{d_{E}}\nu \star_{b}
	      =\pm\star^{-1}_{b} \mathop{d_{E}} \star_{b}=\mathop{d^{\sharp}_{E,g,b}},
 \end{equation}
 and therefore the Hermitian and bilinear Laplacians coincide. We turn to the assertion
 for the corresponding boundary operators.
 On the one hand, the assertion is clear for
 ${\mathcal{B}_{-}}_{E,g,b}={\mathcal{B}_{-}}_{E,g,h}$, because of
 (\ref{equation_d_star_h_equals_d_star_b}) and (\ref{definition_relative_Boundary_operator_on_smooth forms}). On the other hand,
 for a form $v\in\Omega^{p}(M;E)$ and
 $\iota_{\varsigma_{\mathsf{in}}}$, the interior product with respect to the dual form corresponding to
 $\varsigma_{\mathsf{in}}$, the identity
 $\star^{\partial M}_{b} i^{*}\iota_{\varsigma_{\mathsf{in}}} v=i^{*}\star^{M}_{b} v$ holds; therefore
 the operator specifying absolute boundary can be written, independently of the Hermitian or bilinear forms, as
 $
 {\mathcal{B}_{+}}^{p}_{E,g,b}v=
  (i^{*}_{+}\iota_{\varsigma_{\mathsf{in}}}v,(-1)^{p+1} i^{*}_{+}\iota_{\varsigma_{\mathsf{in}}}(\mathop{d_{E}}v))=
 {\mathcal{B}_{+}}^{p}_{E,g,h}v.
 $
 That finishes the proof.
\end{proof}

\begin{lem}
\label{Lemma_existing_neighbourhood_and_parallel_involution}
Let $(M,g)$ be a compact Riemannian manifold and $E$ a flat complex vector bundle over $M$.
Assume $E$ is endowed with
a fiberwise nondegenerate symmetric bilinear form $b$. For each $x\in M$ there exists an open neighborhood $U$ of $x$ in $M$, a parallel anti-linear involution
$\nu$ on $E|_{U}$ and a symmetric bilinear form $\widetilde{b}$ on $E$ such that, for $z\in\C$, the family of fiberwise symmetric bilinear forms
\begin{equation}
\label{family_b_z}
b_{z}:=b+z\widetilde{b},
\end{equation}
has the following properties.
\begin{itemize}
\item[(i)] $b_{z}$ is fiberwise nondegenerate for all $z\in\C$ with $|z|\leq \sqrt{2}$,
\item[(ii)] $\overline{b_{s-\mathbf{i}}(\nu e_{1},\nu e_{2})}=b_{s-\mathbf{i}}(e_{1}, e_{2})$, for all $s\in \R$ and $e_{i}\in E|_{U},$
\item[(iii)] $b_{s-\mathbf{i}}(e,\nu e)>0$ for all $s\in\R$, $|s|\leq 1$ and $0\not=e\in E|_{U}.$
\end{itemize}
\end{lem}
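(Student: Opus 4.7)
The plan is to work in a parallel local frame of $E$ around $x$ in which $b$ equals the identity matrix at $x$, take $\nu$ to be complex conjugation with respect to this frame, and choose the matrix of $\widetilde b$ to be the \emph{imaginary part} of that of $b$. Then the matrix of $b_{s-\mathbf i}$ becomes $\real B+s\,\imaginary B$, which is real symmetric; this immediately translates into condition (ii), while conditions (i) and (iii) follow from continuity arguments around $y=x$, where the matrix reduces to the identity.

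First, I would invoke the linear algebra used in the proof of \cite[Theorem~5.10]{Burghelea-Haller} to select a basis $f_{1}(x),\dots,f_{n}(x)$ of the fiber $E_{x}$ satisfying $b_{x}(f_{i}(x),f_{j}(x))=\delta_{ij}$. Using flatness of $\nabla^{E}$, I extend this basis to a parallel frame $f_{1},\dots,f_{n}$ of $E$ over a simply connected open neighborhood $U_{0}$ of $x$, and define $\nu$ on $E|_{U_{0}}$ to be the anti-linear involution satisfying $\nu f_{i}=f_{i}$; parallelism of $\nu$ is then immediate. Setting $B(y):=(b(f_{i}(y),f_{j}(y)))_{ij}$, the matrix $B$ is a smooth complex symmetric matrix-valued function on $U_{0}$ with $B(x)=I$.

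Next, I would define $\widetilde b_{\mathsf{loc}}$ on $E|_{U_{0}}$ by declaring its matrix in the parallel frame to be $\imaginary B(y)$ (entrywise imaginary part; automatically real symmetric). Extending globally via $\widetilde b:=\chi\cdot\widetilde b_{\mathsf{loc}}$ for a bump function $\chi\colon M\to[0,1]$ with $\operatorname{supp}(\chi)\subset U_{0}$ and $\chi\equiv 1$ on an open neighborhood $U$ of $x$ yields a globally defined smooth symmetric bilinear form on $E$. On $U$, a direct computation gives
\[
B_{s-\mathbf i}(y)=B(y)+(s-\mathbf i)\imaginary B(y)=\real B(y)+s\,\imaginary B(y),
\]
which is real symmetric; since $\nu$ is complex conjugation in this frame, this is exactly condition (ii). For condition (iii), the quantity $b_{s-\mathbf i}(e,\nu e)$ equals $\sum_{ij}(\real B+s\,\imaginary B)_{ij}\lambda_{i}\bar\lambda_{j}$, which at $y=x$ reduces to $\sum|\lambda_{i}|^{2}>0$ for $e\not=0$; shrinking $U$ and invoking uniform continuity on the compact interval $|s|\leq 1$ gives positivity throughout $U$. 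For condition (i), at $y=x$ the matrix $B(y)+z\,\imaginary B(y)=I$ for every $z$, so by continuity and compactness of the disk $|z|\leq\sqrt{2}$ there is a neighborhood of $x$ on which $B+z\,\imaginary B$ is invertible uniformly in $z$; choosing $\operatorname{supp}(\chi)$ inside this neighborhood makes $b_{z}=b+z\chi\widetilde b_{\mathsf{loc}}$ fiberwise nondegenerate on $\operatorname{supp}(\chi)$ (apply the local invertibility at $w:=z\chi(y)$, which has $|w|\leq\sqrt{2}$), and outside $\operatorname{supp}(\chi)$ we have $b_{z}=b$, which is nondegenerate by hypothesis.

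The main obstacle is pure bookkeeping: three continuity/compactness statements (joint invertibility of $B+z\,\imaginary B$ for $|z|\leq\sqrt{2}$, realness of $B_{s-\mathbf i}$ for $s\in\R$, and positive definiteness of $\real B+s\,\imaginary B$ for $|s|\leq 1$) must be combined, and one must shrink $U$ to fit inside the intersection of the resulting open sets, with $\operatorname{supp}(\chi)$ contained in the joint invertibility neighborhood. No deeper analytic or geometric difficulty arises, since once the frame adapted to $b_{x}$ is fixed, everything reduces to matrix-valued continuity.
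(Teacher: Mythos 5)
Your proposal is correct and is essentially the paper's own argument written in a concrete parallel frame: choosing $f_i$ with $b_x(f_i,f_j)=\delta_{ij}$ and letting $\nu$ be conjugation in that frame makes your $\imaginary B$ exactly the paper's $b^{\mathsf{Im}}=\frac{1}{2\mathbf{i}}\bigl(b-\overline{b(\nu\cdot,\nu\cdot)}\bigr)$, and the cut-off and continuity steps coincide. Your explicit treatment of the transition region for (i), via $w:=z\chi(y)$ with $|w|\leq\sqrt{2}$, is a slightly more careful rendering of the paper's ``choose the support of $\lambda$ small enough'' but introduces no new idea.
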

\begin{proof}
Since flat vector bundles are locally trivial, there exists a neighborhood $V$ of $x$ and a parallel
complex anti-linear involution $\nu$ on $E|_{V}$. Moreover, since $b$ is nondegenerate and $\nu$ an involution, we can
assume without loss of generality that $\nu$ can be chosen to be compatible with $b$ at the fiber $E_{x}$ over $x$, such that
$$
b_{x}(\nu e_{1},\nu e_{2})=\overline{b_{x}(e_{1},e_{2})}\quad\text{for all}\quad e_{i}\in E_{x}
$$
 and
$$
b_{x}(\nu e, e)>0\quad\text{for all}\quad 0\not=e\in E_{x}.
$$
Consider
$$
\begin{array}{c}
b^{\mathsf{Re}}(e_{1},e_{2}):=\frac{1}{2}\left(b(e_{1},e_{2})+\overline{b(\nu e_{1},\nu e_{2})}\right),\\
b^{\mathsf{Im}}(e_{1},e_{2}):=\frac{1}{2\mathbf{i}}\left(b(e_{1},e_{2})-\overline{b(\nu e_{1},\nu e_{2})}\right),
\end{array}
$$
as symmetric bilinear forms on $E|_{V}$.
In particular, note that by construction
\begin{equation}
\label{proof_equation_a}
b|_{V}=b^{\mathsf{Re}}+\mathbf{i}b^{\mathsf{Im}}\quad\text{with}\quad b^{\mathsf{Im}}|_{E_{x}}=0,
\end{equation}
\begin{equation}
 \label{proof_equation_b}
\overline{b^{\mathsf{Re}}(\nu e_{1},\nu e_{2})}=b^{\mathsf{Re}}(e_{1},e_{2})\quad\text{and}\quad
\overline{b^{\mathsf{Im}}(\nu e_{1},\nu e_{2})}=b^{\mathsf{Im}}(e_{1},e_{2}),
\end{equation}
for all $e_{i}\in E|_{V}$.
Now, choose an open neighborhood $U\subset V$ of $x$ and a compactly supported smooth function $\lambda:V\rightarrow[0,1]$ such that
$\lambda|_{U}=1$. Thus, by extending $\lambda$ by zero to $M$, we set
\begin{equation}
 \label{proof_equation_b_tilde}
\widetilde{b}:=\lambda b^{\mathsf{Im}},
\end{equation}
as a globally defined symmetric bilinear form on $E$. Using
$$
b_{s-\mathbf{i}}|_{U}=\left(b+(s-\mathbf{i})\widetilde{b}\right)|_{U}=
b|_{U}+(s-\mathbf{i})b^{\mathsf{Im}}|_{U}=b^{\mathsf{Re}}|_{U}+sb^{\mathsf{Im}}|_{U}
$$
and (\ref{proof_equation_b}) we immediately obtain (ii). In turn, (ii) implies $$\overline{b_{s-\mathbf{i}}(\nu e, e)}=b_{s-\mathbf{i}}(\nu e, e)$$ and hence $b_{s-\mathbf{i}}(\nu e, e)$ is real for all $s\in\R$ and $e\in E|_{U}$.
Finally, by the formula (\ref{family_b_z}) defining $b_{z}$ at $x$, we have $b^{\mathsf{Im}}|_{x}=0$ and therefore
\begin{itemize}
 \item $b_{z}|_{x}$ is nondegenerate,
 \item $b_{s-\mathbf{i}}|_{x}(\nu e,e)=b|_{x}(\nu e,e)>0$ for all $0\not= e\in E_{x}$,
\end{itemize}
from which (i) (resp. (iii)) follows by taking $|z|\leq\sqrt{2}$ (resp. $|s|\leq 1$) and then choosing the support of $\lambda$ small enough around $x$.
\end{proof}
The following Proposition provides the key argument in
the proof of Theorem
\ref{Theorem infinitesimal_constant_terms_in_asymptotic_expansions_bilinear_case_relative_and absolute_boundary_conditions} below.
\begin{prop}
\label{Proposition_family_of_bilinear_and_Hermitian_forms}
Let $[\Delta,\mathcal{B}]^{E,g,b}_{(M,\partial_{+}M,\partial_{-}M)}$ be the bilinear boundary
value problem under absolute and relative boundary conditions on $(M,\partial_{+}M,\partial_{-}M)$.
Then, for each $x\in M$, there exist $\{b_{z}\}_{z\in\C}$ a family of \textit{fiberwise symmetric
bilinear forms} on $E$,  and $\{h_{s}\}_{s\in\R}$ a family of fiberwise \textit{sesquilinear
Hermitian forms} on $E$ such that
\begin{enumerate}
\item[(i)] $b_{z}$ is fiberwise nondegenerate for all $z\in\C$ such that $|z|\leq\sqrt{2}$.
\item[(ii)]$h_{s}$ is fiberwise positive definite Hermitian form for $s\in\R$ with $|s|\leq 1$.
\item[(iii)] For each $s\in\R$ with $|s|\leq 1$,
             consider
	     $[{\Delta,\Omega_{\mathcal{B}}}]^{E,g,h_{s}}_{(M,\partial_{+}M,\partial_{-}M)}$
             the corresponding Hermitian boundary value problem. Then,
  there exists a neighborhood $U$ of $x$ such that
$$
\begin{array}{lcr}
\Delta_{E,g,b_{s-\mathbf{i}}}|_{U}=\Delta_{E,g,h_{s}}|_{U}&\text{ and }&
\mathcal{B}_{E,g,b_{s-\mathbf{i}}}|_{U}=\mathcal{B}_{E,g,h_{s}}|_{U}.
\end{array}
$$
\end{enumerate}
\end{prop}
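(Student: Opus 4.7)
The plan is to build $b_{z}$ and $h_{s}$ from the local data provided by Lemma~\ref{Lemma_existing_neighbourhood_and_parallel_involution}, and then to invoke a local version of Lemma~\ref{lemma_parallel_involution_implies_d_adjoint_equals_d_transpose} to identify the two boundary value problems near $x$. I would first apply Lemma~\ref{Lemma_existing_neighbourhood_and_parallel_involution} at $x$ to obtain an open neighborhood $U_{0}\ni x$, a parallel anti-linear involution $\nu$ on $E|_{U_{0}}$, and a globally defined symmetric bilinear form $\widetilde{b}$ on $E$ such that $b_{z}:=b+z\widetilde{b}$ satisfies the three properties listed there. Property~(i) of the proposition is then immediate from part~(i) of that lemma.

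Next, on $U_{0}$, I would set $h_{s}^{\mathsf{loc}}(e_{1},e_{2}):=b_{s-\mathbf{i}}(e_{1},\nu e_{2})$. Parts~(ii) and~(iii) of Lemma~\ref{Lemma_existing_neighbourhood_and_parallel_involution}, together with the involutivity of $\nu$ and the symmetry of $b_{s-\mathbf{i}}$, show that $h_{s}^{\mathsf{loc}}$ is a sesquilinear Hermitian form on $E|_{U_{0}}$ which is positive definite whenever $|s|\leq 1$. To globalize, I choose a smaller neighborhood $U\ni x$ with $\overline{U}\subset U_{0}$, a smooth cutoff $\rho:M\rightarrow[0,1]$ with $\rho|_{U}=1$ and $\mathsf{supp}(\rho)\subset U_{0}$, and a fixed globally defined positive definite Hermitian form $h^{\mathsf{aux}}$ on $E$. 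Setting
$$
h_{s}:=\rho\cdot h_{s}^{\mathsf{loc}}+(1-\rho)\cdot h^{\mathsf{aux}}
$$
(with $\rho\cdot h_{s}^{\mathsf{loc}}$ extended by zero outside $\mathsf{supp}(\rho)$) yields a smooth family of globally defined sesquilinear Hermitian forms on $E$ which coincides with $h_{s}^{\mathsf{loc}}$ on $U$ and which is positive definite for $|s|\leq 1$, since a convex combination of positive definite Hermitian forms is positive definite. This gives~(ii).

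For~(iii), the crucial observation is that the bilinear Laplacian $\Delta_{E,g,b_{s-\mathbf{i}}}$ and its boundary operator $\mathcal{B}_{E,g,b_{s-\mathbf{i}}}$ are \emph{local}: their action at a point of $U$ depends only on the germs of $b_{s-\mathbf{i}}$, $g$ and $\nabla^{E}$ there, and likewise for their Hermitian counterparts associated to $h_{s}$. On $U$ one has $h_{s}=b_{s-\mathbf{i}}(\cdot,\nu\,\cdot)$ with $\nu$ parallel, so the identities $\star_{h_{s}}=\nu\circ\star_{b_{s-\mathbf{i}}}$ and $d_{E}\nu=\nu d_{E}$ used in the proof of Lemma~\ref{lemma_parallel_involution_implies_d_adjoint_equals_d_transpose} hold in germs throughout $U$, hence so does $d^{*}_{E,g,h_{s}}=d^{\sharp}_{E,g,b_{s-\mathbf{i}}}$; the equalities of Laplacians and of boundary operators on $U$ follow exactly as in that proof. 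The main obstacle is this globalization step: $\nu$ exists only on $U_{0}$, so a cutoff is unavoidable, and one must keep $U$ strictly inside the set where $\rho\equiv 1$ so that every point of $U$ has a neighborhood on which $h_{s}$ genuinely equals the compatible form $b_{s-\mathbf{i}}(\cdot,\nu\,\cdot)$, allowing the local version of Lemma~\ref{lemma_parallel_involution_implies_d_adjoint_equals_d_transpose} to apply at that point.
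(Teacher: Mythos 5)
Your proposal is correct and follows essentially the same route as the paper: obtain $b_{z}$ and the parallel involution $\nu$ from Lemma~\ref{Lemma_existing_neighbourhood_and_parallel_involution}, define the compatible Hermitian form locally via $\nu$ and formula~(\ref{compatible_hermitian_form}), globalize by a cutoff/partition-of-unity argument using convexity of the space of positive definite Hermitian forms, and deduce~(iii) from the local applicability of Lemma~\ref{lemma_parallel_involution_implies_d_adjoint_equals_d_transpose}. Your explicit insistence that $U$ lie inside the set where the cutoff is identically $1$ is a welcome (if minor) sharpening of the paper's partition-of-unity step.
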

\begin{proof}
By Lemma  \ref{Lemma_existing_neighbourhood_and_parallel_involution}.(i), for each $x\in M$, there exists a globally defined
fiberwise symmetric bilinear form $\widetilde{b}$ on $E$ such that the formula $b_{z}:=b+z\widetilde{b}$ in
(\ref{family_b_z})
defines a family of fiberwise nondegenerate symmetric bilinear forms on $E$, satisfying
the required property in (i).
In addition, we know that for each $x\in M$,
there exist an open neighborhood $V$ of $x$ and a
parallel complex anti-linear involution $\nu$ on $E|_{V}$.
By Lemma \ref{Lemma_existing_neighbourhood_and_parallel_involution}.(i)-(ii), we also know that
we can find  $U\subset V$ a small enough open neighborhood of $x$, such that $b_{s-\mathbf{i}}$ satisfies the conditions (i) and (ii) on $E|_{U}$, for $|s|\leq 1$.
Hence, by using the formula in (\ref{compatible_hermitian_form}), we obtain a fiberwise positive definite Hermitian form compatible with $b_{s-\mathbf{i}}$ on $E|_{U}$ given by
$h_{s}^{U}(e_{1},e_{2}):=b_{s-\mathbf{i}}(\nu e_{1},e_{2}).$
Now we extend $h_{s}^{U}$ to a (positive definite) Hermitian form on $E$ as follows. We take $h^{\prime}$
any arbitrary Hermitian form on $E$ and consider the finite
open covering $\{U^{\prime}_{0},U^{\prime}_{1}\ldots,U^{\prime}_{N}\}$ of $M$, with $U^{\prime}_{0}:=U$, together with a subordinate partition of unity $\{f_{j}\}_{U^{\prime}_{j}}$. If
$h_{j}^{\prime}:=h^{\prime}|_{U_{j}}$, then $h_{s}:=f_{0}h_{s}^{U}+\sum_{j=1}^{N}f_{j}h_{j}^{\prime}$ globally defines a fiberwise positive definite Hermitian form on $E$,
as the space of Hermitian forms on $E$ is a convex space. This proves (ii). Then,
(iii) follows from Lemma \ref{lemma_parallel_involution_implies_d_adjoint_equals_d_transpose}.
\end{proof}

\subsection{Heat trace asymptotics for bilinear boundary value problems}
\label{subsection_Constant term in the heat trace asymptotic expansions for boundary problems}
\begin{lem}
\label{Lemma_holomorphic_dependance_of_a_m_on_a_parameter_u}
Let $O$ be an open connected subset in $\C$ and $\{z\mapsto b_{z}\}_{z\in U}$ a holomorphic
family of fiberwise nondegenerate symmetric bilinear
forms on $E$.
For the bordism $(M,\partial_{+}M,\partial_{-}M)$ consider
$
\{\left[\Delta,\Omega_{\mathcal{B}}\right]^{E,g,b_{z}}_{(M,\partial_{+}M,\partial_{-}M)}\}_{z\in O},$ the family
of boundary value problems
corresponding to bilinear Laplacians under absolute/relative boundary conditions, together with their $L^{2}$-realizations denoted by
$\Delta_{\mathcal{B},b_{z}}$.
Then, for each $\psi\in\End(\Lambda T^{*}M\otimes E)$,
 the map
$$
z\mapsto\LIM_{t\rightarrow 0}\left(\STr\left(\psi \exp(-t\Delta_{\mathcal{B},b_{z}})\right)\right)
$$
is holomorphic on $O$.
\end{lem}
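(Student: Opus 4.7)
The plan is to deduce holomorphy from the locality and universality of the constant term $a_m$ in the heat trace asymptotic expansion, as recalled in Section \ref{subsection_Heat trace asymptotics for the heat kernel of a Laplace type operator under local boundary conditions}. By the very definition of $\LIM_{t\to 0}$ in (\ref{equation_notation_for_the_constant_term_in_the asymptotic_expansion}), the quantity in the statement equals $a_{m}(\psi,\Delta_{E,g,b_{z}},\mathcal{B}_{E,g,b_{z}})$. Using formula (\ref{equation_coefficients_asymptotic_expansion}), this coefficient is an integral over $M$ and $\partial M$ of pointwise expressions in the jets of the symbols of $\Delta_{E,g,b_{z}}$ and $\mathcal{B}_{E,g,b_{z}}$ (multiplied by $\psi$ or its covariant derivatives). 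So it suffices to establish (a) that these symbols depend holomorphically on $z$, and (b) that pointwise holomorphy is preserved under integration over $M$ and $\partial M$.

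For (a), I would argue as follows. The hypothesis that $b_{z}$ is fiberwise nondegenerate and holomorphic in $z\in O$ implies that the vector bundle isomorphism $E\to E^{\prime}$ induced by $b_{z}$ and its inverse depend holomorphically on $z$. Therefore the operator $\star_{b,q}$ appearing before formula (\ref{formula_definition_d_transpose}), together with $\star_{b,q}^{-1}$, are holomorphic in $z$. Combining this with the definitions (\ref{formula_definition_d_transpose}) and (\ref{formula_definition_Laplacian}), together with the definitions (\ref{definition_relative_Boundary_operator_on_smooth forms_0})--(\ref{definition_relative_Boundary_operator_on_smooth forms}) of the boundary operator, we conclude that in every local trivialization the total symbols of $\Delta_{E,g,b_{z}}$ and of $\mathcal{B}_{E,g,b_{z}}$, as well as all of their jets of any finite order, are holomorphic functions of $z$ with values in smooth sections over compact subsets.

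For (b), recall from Section \ref{subsection_Heat trace asymptotics for the heat kernel of a Laplace type operator under local boundary conditions} that the invariant endomorphism-valued densities $\mathfrak{e}_{m}(\Delta_{E,g,b_{z}})$ and $\mathfrak{e}_{m,k}(\Delta_{E,g,b_{z}},\mathcal{B}_{E,g,b_{z}})$ in (\ref{equation_coefficients_asymptotic_expansion}) are \emph{universal polynomials} in the jets of these symbols (see \cite{Greiner}, \cite{Seeley}, \cite{Seeleya}, \cite{Seeleyb} and \cite[Sections 1.7 and 1.8]{Gilkey2}); in particular they are pointwise holomorphic in $z$. Taking the finite-dimensional trace against $\psi$ (resp.\ $\nabla_{\varsigma_{\mathsf{in}}}^{k}\psi$) and then integrating over the compact manifolds $M$ and $\partial M$ preserves holomorphy by a straightforward application of Morera's theorem and Fubini, since the integrands are jointly continuous in $(z,x)$ and holomorphic in $z$ for each fixed $x$, with uniform bounds on compact subsets of $O$.

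The main conceptual point, and the only one that might require caution, is the applicability of Greiner's and Seeley's universal local formulas in the present non-selfadjoint setting: the family $\Delta_{E,g,b_{z}}$ is generally not symmetric with respect to any Hermitian structure, but it is an elliptic operator of Laplace type and $\mathcal{B}_{E,g,b_{z}}$ defines an elliptic boundary value problem with respect to the cone $\C\setminus(0,\infty)$ (see the argument in Lemma \ref{Proposition_spectrum_Delta_L_2_invariance_of_domain_of_delta_contained_in_domain_of_d}), so Greiner's and Seeley's construction of the parametrix and the resulting local formula (\ref{equation_coefficients_asymptotic_expansion}) apply unchanged. No analytic continuation of $z$ from a Hermitian locus is needed; holomorphy is immediate from the local formula.
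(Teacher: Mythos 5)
Your proposal is correct and follows essentially the same route as the paper's own proof: holomorphy of $z\mapsto b_{z}^{-1}$ yields holomorphy of the symbols of $\Delta_{E,g,b_{z}}$ and $\mathcal{B}_{E,g,b_{z}}$, the Seeley--Greiner universal polynomial structure of the local coefficients $\mathfrak{e}_{m}$ and $\mathfrak{e}_{m,k}$ transfers this to the integrands in (\ref{equation_coefficients_asymptotic_expansion}), and Morera's theorem handles the integration over $M$ and $\partial M$. Your closing remark on the applicability of the parametrix construction in the non-selfadjoint setting is left implicit in the paper but is consistent with its reliance on ellipticity with respect to the cone $\C\setminus(0,\infty)$.
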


\begin{proof}
By compactness, we may
assume without loss of generality that $\psi$  
is compactly supported in the interior of a sufficiently small open set $U$ in $M$.
Remark that
the function $z\mapsto b_{z}^{-1}$ is holomorphic, since $z\mapsto b_{z}$ is a holomorphic family of fiberwise nondegenerate bilinear forms in $z\in O$.
Then, as it can directly be checked by
construction of the bilinear Laplacian in (\ref{formula_definition_Laplacian}) and the boundary operators in (\ref{definition_Boundary_operator_on_smooth forms}),
the assignments
$z\mapsto\Delta_{E,g,b_{z}}$ and $z\mapsto\mathcal{B}_{E,g,b_{z}}$ respectively define holomorphic functions in $z\in O$.
Therefore, the coefficients of the symbols of $\Delta_{E,g,b_{z}}$ and $\mathcal{B}_{E,g,b_{z}}$
are holomorphic functions in $z\in O$. 
Now, the expression
$\LIM_{t\rightarrow 0}(\STr(\psi \exp(-t\Delta_{\mathcal{B},b_{z}})))$ is computed with the formula
(\ref{equation_coefficients_asymptotic_expansion}), by
integrating the complex-valued function $\STr(\psi\cdot\mathfrak{e}_{m}(\Delta_{E,g,b_{z}}))$ over $U$, and the complex-valued function
$\STr({\nabla_{\varsigma_{\mathsf{in}}}}^{k}\psi\cdot\mathfrak{e}_{m,k}(\Delta_{E,g,b_{z}},\mathcal{B}_{E,g,b_{z}}))$
over $U\cap\partial M$. Since
$\mathfrak{e}_{m}(\Delta_{E,g,b_{z}})$ are locally computable endomorphism invariants,
the value of $\STr_{x}(\psi_{x}\cdot\mathfrak{e}_{m}(\Delta_{E,g,b_{z}})_{x})$ can be computed inductively
by using explicit formulas
as a universal polynomial
in terms of (finite number of the derivatives of) the coefficients of the symbol of $\Delta_{E,g,b_{z}}$,
whenever these are given in local coordinates around at $x\in M$, see \cite[Theorem 3]{Seeleyb}, \cite[formulas (3)-(6) and Lemma 1]{Seeleya}, see also \cite[Section 2.6]{Greiner}.
In the same token, since $\mathfrak{e}_{m,k}(\Delta_{E,g,b_{z}},\mathcal{B}_{E,g,b_{z}})$
are locally computable endomorphism invariants on the boundary,
the value of $\STr_{y}(({\nabla_{\varsigma_{\mathsf{in}}}}^{k}\psi)_{y}\cdot\mathfrak{e}_{m,k}(\Delta_{E,g,b_{z}},\mathcal{B}_{E,g,b_{z}})_{y})$ is expressible,
by inductively solving certain systems of ordinary differential equations,
as a universal polynomial
in terms of (finite number of the derivatives of) the coefficients of the symbols of $\Delta_{E,g,b_{z}}$ and $\mathcal{B}_{E,g,b_{z}}$,
whenever these are given in local coordinates around at $y\in\partial M$,
see  \cite[Theorem 3]{Seeleyb}, \cite[formulas (9)-(14) and Lemma 2]{Seeleya}, see also \cite[Section 2.6]{Greiner}. 
Thus the mappings
$z\mapsto\STr_{x}(\mathfrak{e}_{m}(\Psi,\Delta_{z})_{x})$ and $z\mapsto\STr_{x}(\mathfrak{e}_{m,k}(\Psi,\Delta_{z},\mathcal{B}_{z})_{x})$
are holomorphic on $O$ for each $x\in U$. Finally, by Morera's Theorem,
the integral of a function depending holomorphically on a parameter $z$, also depends holomorphically on $z$, that is,
the function
$z\mapsto\LIM_{t\rightarrow 0}\left(\STr\left(\psi \exp(-t\Delta_{\mathcal{B},b_{z}})\right)\right)$ depends holomorphically
on $z\in O$.
\end{proof}

\begin{thm}
\label{Theorem infinitesimal_constant_terms_in_asymptotic_expansions_bilinear_case_relative_and absolute_boundary_conditions}
For $(M,\partial_{+}M,\partial_{-}M)$ consider the bilinear boundary value problem
$[\Delta,\mathcal{B}]^{E,g,b}_{(M,\partial_{+}M,\partial_{-}M)}$, together with
its $L^{2}$-realization $\Delta_{\mathcal{B},b}$.
If
$\phi$, $\xi$ and $\Psi$ are as in Proposition
\ref{Proposition_infinitesimal_constant_term_in_asymptotic_expansion_Hermitian_absolute_boundary_conditions}, then
 \begin{eqnarray}
  \nonumber\scriptstyle\LIM_{t\rightarrow 0} \left(\STr\left(\phi
   \exp(-t\Delta_{\mathcal{B},b})\right)\right)&=&\scriptstyle\int_{M}\Tr(\phi)\mathbf{e}(M,g)+(-1)^{m-1}\int_{\partial_{+} M}\Tr(\phi) i^{*}_{+}\mathbf{e_{b}}(\partial M,g)\\
 &&\scriptstyle-\int_{\partial_{-} M}\Tr(\phi)i_{-}^{*}
  \mathbf{e_{b}}(\partial M,g),   \label{equation_anomalyformula_1}
\end{eqnarray}
and

 \begin{eqnarray}
 \nonumber\scriptstyle\LIM_{t\rightarrow 0} \left(\STr\left(-\Psi
 \exp(-t\Delta_{\mathcal{B},b})\right)\right)&=&\scriptstyle-2\int_{M} \left.\frac{\partial}{\partial\tau}\right|_{\tau=0
 }\mathbf{\widetilde{e}}(M,g,g+\tau g\xi)\wedge
 \omega(\nabla^{E},b)\\
&&\nonumber\scriptstyle-2\int_{\partial_{+}M} \left.\frac{\partial}{\partial\tau}\right|_{\tau=0
	}i^{*}_{+}\mathbf{\widetilde{e}_{b}}(\partial M,g,g+\tau g\xi)\wedge
	\omega(\nabla^{E},b)\\
&&\label{equation_anomalyformula_2}\scriptstyle+\mathsf{rank}(E)\int_{\partial_{+} M}\left.\frac{\partial}{\partial\tau}\right|_{\tau=0}
             i^{*}_{+}B(\partial M,g+\tau g\xi)\\
&&\nonumber\scriptstyle-2(-1)^{m}\int_{\partial_{-}M} \left.\frac{\partial}{\partial\tau}\right|_{\tau=0
	}i^{*}_{-}\mathbf{\widetilde{e}_{b}}(\partial M,g,g+\tau g\xi)\wedge
	\omega(\nabla^{E},b)\\
&&\nonumber\scriptstyle+(-1)^{m+1}\mathsf{rank}(E)\int_{\partial_{-} M}\left.\frac{\partial}{\partial\tau}\right|_{\tau=0}
	      i^{*}_{-}B(\partial M,g+\tau g\xi).
\end{eqnarray}

\end{thm}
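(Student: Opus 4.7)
The plan is to reduce the bilinear statement to the Hermitian one (Theorem \ref{Theorem infinitesimal_constant_terms_in_asymptotic_expansions_Hermitian_case_relative_and absolute_boundary_conditions}) by combining three ingredients already established in the excerpt: local computability of the constant term of the heat trace (Section \ref{subsection_Heat trace asymptotics for the heat kernel of a Laplace type operator under local boundary conditions}), the local matching of bilinear and Hermitian boundary value problems coming from a compatible pair (Proposition \ref{Proposition_family_of_bilinear_and_Hermitian_forms}), and the holomorphic dependence of the constant term on the bilinear parameter (Lemma \ref{Lemma_holomorphic_dependance_of_a_m_on_a_parameter_u}). The key idea is that each side of \eqref{equation_anomalyformula_1} and \eqref{equation_anomalyformula_2} extends holomorphically to a one-complex-parameter family of bilinear forms; the Hermitian theorem forces equality along a real segment hitting that family transversally, and analytic continuation does the rest.

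First I would localize. Both formulas are linear in $\phi$ and in $\Psi=\mathbf{D}^{*}\xi-\tfrac{1}{2}\Tr(\xi)$ (the latter because $\psi_{\alpha}\Psi=\Psi(\psi_{\alpha}\xi)+O(\text{lower order in }\xi)$ splits additively when applied to fixed $\xi$ via a smooth partition of unity). Choose a finite open cover $\{U_{\alpha}\}$ of $M$ by neighborhoods furnished by Proposition \ref{Proposition_family_of_bilinear_and_Hermitian_forms} and a subordinate partition of unity $\{\psi_{\alpha}\}$. By local computability of $\LIM_{t\to 0}$, for each $\alpha$ the quantity $\LIM_{t\to 0}\STr(\psi_{\alpha}\phi\exp(-t\Delta_{\mathcal{B},b}))$ depends only on the symbols of $\Delta_{E,g,b}$ and $\mathcal{B}_{E,g,b}$ at points in $\mathrm{supp}(\psi_{\alpha})\subset U_{\alpha}$, and analogously for the $\Psi$-trace. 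It therefore suffices to prove each formula with $\phi$, respectively $\xi$, supported in a single such $U=U_{\alpha}$.

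Fix such an $\alpha$ and the corresponding families $\{b_{z}\}_{|z|\leq\sqrt{2}}$ and $\{h_{s}\}_{|s|\leq 1}$ from Proposition \ref{Proposition_family_of_bilinear_and_Hermitian_forms}, with $b_{0}=b$. Define
\[
F(z):=\LIM_{t\to 0}\STr\bigl(\phi\exp(-t\Delta_{\mathcal{B},b_{z}})\bigr),\qquad
G(z):=\text{right-hand side of \eqref{equation_anomalyformula_1} with $b$ replaced by $b_{z}$},
\]
and analogously for \eqref{equation_anomalyformula_2}, where in that case the $b$-dependence of $G$ enters only through the one-form $\omega(\nabla^{E},b_{z}):=-\tfrac{1}{2}\Tr(b_{z}^{-1}\nabla^{E}b_{z})$. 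By Lemma \ref{Lemma_holomorphic_dependance_of_a_m_on_a_parameter_u} the function $F$ is holomorphic on $\{|z|\leq\sqrt{2}\}$; for formula \eqref{equation_anomalyformula_1} $G$ is even constant in $z$, and for formula \eqref{equation_anomalyformula_2} $G$ is holomorphic because $z\mapsto b_{z}^{-1}$ is holomorphic on the same disk. Next, for $s\in[-1,1]$ Proposition \ref{Proposition_family_of_bilinear_and_Hermitian_forms}(iii) gives $\Delta_{E,g,b_{s-\mathbf{i}}}|_{U}=\Delta_{E,g,h_{s}}|_{U}$ and $\mathcal{B}_{E,g,b_{s-\mathbf{i}}}|_{U}=\mathcal{B}_{E,g,h_{s}}|_{U}$; since $\mathrm{supp}(\phi)\subset U$ and the heat-trace constant term is locally computable, $F(s-\mathbf{i})$ equals the same expression with $\Delta_{\mathcal{B},b_{s-\mathbf{i}}}$ replaced by $\Delta_{\mathcal{B},h_{s}}$. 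Theorem \ref{Theorem infinitesimal_constant_terms_in_asymptotic_expansions_Hermitian_case_relative_and absolute_boundary_conditions} then equates this with the Hermitian right-hand side; using $\nabla^{E}\nu=0$ on $U$, the identity $\Tr(h_{s}^{-1}\nabla^{E}h_{s})=\Tr(b_{s-\mathbf{i}}^{-1}\nabla^{E}b_{s-\mathbf{i}})$ from Section \ref{Section_Involutions, bilinear and Hermitian forms} yields $\omega(\nabla^{E},h_{s})=\omega(\nabla^{E},b_{s-\mathbf{i}})$ on $U$, so the Hermitian right-hand side coincides with $G(s-\mathbf{i})$.

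Thus $F(z)=G(z)$ on the horizontal segment $\{s-\mathbf{i}:|s|\leq 1\}$, which has accumulation points in the open disk of holomorphicity. The identity principle forces $F\equiv G$, and evaluating at $z=0$ recovers the formula for the original bilinear form $b$. Summing over $\alpha$ gives the global statement. The step I expect to need the most care is the compatibility between the partition of unity, the boundary contributions, and the local equality of the boundary operators: one must check that the boundary pieces of $\LIM_{t\to 0}$ only see the symbols of $\mathcal{B}_{E,g,b_{z}}$ on $U\cap\partial M$, so that the replacement by $\mathcal{B}_{E,g,h_{s}}$ is legitimate for $z=s-\mathbf{i}$; this is precisely guaranteed by part (iii) of Proposition \ref{Proposition_family_of_bilinear_and_Hermitian_forms} together with the explicit local formulas for $\mathfrak{e}_{n,k}$ recalled in Section \ref{subsection_Heat trace asymptotics for the heat kernel of a Laplace type operator under local boundary conditions}.
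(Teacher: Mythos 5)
Your proposal is correct and follows essentially the same route as the paper: localize via the neighborhoods and compatible families $b_{z}$, $h_{s}$ of Proposition \ref{Proposition_family_of_bilinear_and_Hermitian_forms}, use local computability to identify the bilinear and Hermitian constant terms along the segment $\{s-\mathbf{i}:|s|\leq 1\}$, invoke Theorem \ref{Theorem infinitesimal_constant_terms_in_asymptotic_expansions_Hermitian_case_relative_and absolute_boundary_conditions} there, and conclude by the holomorphy of Lemma \ref{Lemma_holomorphic_dependance_of_a_m_on_a_parameter_u} and analytic continuation back to $z=0$. Your explicit check that $\omega(\nabla^{E},h_{s})=\omega(\nabla^{E},b_{s-\mathbf{i}})$ on $U$ is a detail the paper leaves implicit, but it does not change the argument.
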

 \begin{proof}
 By compactness of $M$, it suffices to show that each point $x\in M$ admits a neighborhood $U$ so
 that the formulas above hold for all $\phi$ with $\mathsf{supp}(\phi)\subset U$ and $\xi$ with $\mathsf{supp}(\xi)\subset U$.
 For each $x\in M$, choose
 $
 b_{z}=b+z\widetilde{b}$,  $h_{s}$ 
 and $U$ as in
 Proposition \ref{Proposition_family_of_bilinear_and_Hermitian_forms}, with $\mathsf{supp}(\phi)\subset U$.
 By Proposition \ref{Proposition_family_of_bilinear_and_Hermitian_forms} (iii), we obtain
 $
 \LIM_{t\rightarrow 0 }\STr(\phi \exp(-t\Delta_{\mathcal{B},b_{s-\mathbf{i}}}
   ))=
 \LIM_{t\rightarrow 0 }\STr\left(\phi \exp(-t\Delta_{\mathcal{B},h_{s}})\right),
 $
 for all $|s|\leq 1$, for these quantities depend on the geometry over $U$ only.
 From Theorem
 \ref{Theorem infinitesimal_constant_terms_in_asymptotic_expansions_Hermitian_case_relative_and absolute_boundary_conditions}, we have
 \begin{eqnarray}
   \nonumber\scriptstyle\LIM_{t\rightarrow 0 }\STr\left(\phi \exp(-t\Delta_{\mathcal{B},b_{s-\mathbf{i}}}
   )\right)&=&\scriptstyle
\int_{M}\Tr(\phi)\mathbf{e}(M,g)
  +(-1)^{m-1}\int_{\partial_{+} M}\Tr(\phi) i^{*}_{+}\mathbf{e_{b}}(\partial M,g)\\
&&\nonumber\scriptstyle-\int_{\partial_{-} M}\Tr(\phi)
  i^{*}_{-}\mathbf{e_{b}}(\partial M,g)
\end{eqnarray}
 for all $|s|\leq 1$.
Now, since the function
$z\mapsto\LIM_{t\rightarrow 0}\STr(\phi \exp(-t\Delta_{\mathcal{B},b_{z}}))$
depends holomorphically on $z$
(see Lemma \ref{Lemma_holomorphic_dependance_of_a_m_on_a_parameter_u}), that the right hand side of the equality above is constant
in $z$, and that the domain of definition of $z$ contains an accumulation point, these formulas are extended by analytic continuation to
 \begin{eqnarray}
     \nonumber\scriptstyle\LIM_{t\rightarrow 0}\STr\left(\phi \exp(-t\Delta_{\mathcal{B},b_{z}})\right)&=&\scriptstyle
\int_{M}\Tr(\phi)\mathbf{e}(M,g)
  +(-1)^{m-1}\int_{\partial_{+} M}\Tr(\phi) i^{*}_{+}\mathbf{e_{b}}(\partial M,g)\\
&&\nonumber\scriptstyle-\int_{\partial_{-} M}\Tr(\phi)
  i^{*}_{-}\mathbf{e_{b}}(\partial M,g),
\end{eqnarray}
for all $|z|\leq\sqrt{2}$.
After setting $z=0$ we obtain the desired identity in (\ref{equation_anomalyformula_1}).
We now show (\ref{equation_anomalyformula_2}). Similarly take $\xi$ with $\mathsf{supp}(\xi)\subset U$, using Proposition \ref{Proposition_family_of_bilinear_and_Hermitian_forms} (iii), we obtain
\begin{equation}
 \label{equation_proof_equation_anomalyformula_2}
 \LIM_{t\rightarrow 0}\STr(-\Psi
 \exp(-t\Delta_{\mathcal{B},b_{s-\mathbf{i}}}
   ))=
 \LIM_{t\rightarrow 0 }\STr(-\Psi \exp(-t\Delta_{\mathcal{B},h_{s}}))
\end{equation}
for all $|s|\leq 1$, for these quantities depend on the geometry over $U$ only. Then, we apply Theorem
 \ref{Theorem infinitesimal_constant_terms_in_asymptotic_expansions_Hermitian_case_relative_and absolute_boundary_conditions}
to the right hand side of the equality in (\ref{equation_proof_equation_anomalyformula_2}) we conclude

 \begin{eqnarray}
     \nonumber\scriptstyle
 \LIM_{t\rightarrow 0}\STr\left(-\Psi
 \exp(-t\Delta_{\mathcal{B},b_{s-\mathbf{i}}}
   )\right)&=& \scriptstyle
-2\int_{M} \left.\frac{\partial}{\partial\tau}\right|_{\tau=0
 }\mathbf{\widetilde{e}}(M,g,g+\tau g\xi)\wedge
 \omega(\nabla^{E},b_{s-\mathbf{i}})\hspace{2cm}\\
&&\nonumber\scriptstyle	-2\int_{\partial_{+}M} \left.\frac{\partial}{\partial\tau}\right|_{\tau=0
	}i^{*}_{+}\mathbf{\widetilde{e}_{b}}(\partial M,g,g+\tau g\xi)\wedge
	\omega(\nabla^{E},b_{s-\mathbf{i}})\\
&&\label{equation_proof_equation_anomalyformula_2_2}\scriptstyle
            +\mathsf{rank}(E)\int_{\partial_{+} M}\left.\frac{\partial}{\partial\tau}\right|_{\tau=0}
             i^{*}_{+}B(\partial M,g+\tau g\xi)\\
&&\nonumber\scriptstyle	-2(-1)^{m}\int_{\partial_{-}M} \left.\frac{\partial}{\partial\tau}\right|_{\tau=0
	}i^{*}_{-}\mathbf{\widetilde{e}_{b}}(\partial M,g,g+\tau g\xi)\wedge
	\omega(\nabla^{E},b_{s-\mathbf{i}})\\
&&\nonumber\scriptstyle     +(-1)^{m+1}\mathsf{rank}(E)\int_{\partial_{-} M}\left.\frac{\partial}{\partial\tau}\right|_{\tau=0}
	      i^{*}_{-}B(\partial M,g+\tau g\xi),
\end{eqnarray}
 for all $|s|\leq 1$.
 Now, the function
 $z\mapsto\LIM_{t\rightarrow 0}\STr(\phi \exp(-t\Delta_{\mathcal{B},b_{z}}))$ on the left of (\ref{equation_proof_equation_anomalyformula_2_2})
 depends holomorphically on $z$ see Lemma \ref{Lemma_holomorphic_dependance_of_a_m_on_a_parameter_u}. On the other hand
 the long expression on the right hand side of the
 equality above in (\ref{equation_proof_equation_anomalyformula_2_2}) is also a holomorphic function in $z\in \C$ with $|z|\leq\sqrt{2}$, since it can be formally considered as the composition of constant functions (in $z$)
 and the function
 $
 z\mapsto\omega(\nabla^{E},b_{z})=-\frac{1}{2}\Tr(b^{-1}_{z}\nabla^{E}b_{z}),
 $
 which is holomorphic, since by Proposition \ref{Proposition_family_of_bilinear_and_Hermitian_forms} the bilinear form $b_{z}$ in (\ref{family_b_z})
 is fiberwise nondegenerate for $|z|\leq\sqrt{2}$.
 Then the identity in (\ref{equation_proof_equation_anomalyformula_2_2}) can be analytically extended to

 \begin{eqnarray}
     \nonumber\scriptstyle
 \LIM_{t\rightarrow 0}\STr\left(-\Psi
 \exp(-t\Delta_{\mathcal{B},b_{z-\mathbf{i}}}
   )\right)&=&\scriptstyle
-2\int_{M} \left.\frac{\partial}{\partial\tau}\right|_{\tau=0
 }\mathbf{\widetilde{e}}(M,g,g+\tau g\xi)\wedge
 \omega(\nabla^{E},b_{z-\mathbf{i}})\hspace{2cm}\\
&&\nonumber\scriptstyle	-2\int_{\partial_{+}M} \left.\frac{\partial}{\partial\tau}\right|_{\tau=0
	}i^{*}_{+}\mathbf{\widetilde{e}_{b}}(\partial M,g,g+\tau g\xi)\wedge
	\omega(\nabla^{E},b_{z-\mathbf{i}})\\
&&\label{equation_proof_equation_anomalyformula_2_3}\scriptstyle +\mathsf{rank}(E)\int_{\partial_{+} M}\left.\frac{\partial}{\partial\tau}\right|_{\tau=0}
             i^{*}_{+}B(\partial M,g+\tau g\xi)\\
&&\nonumber\scriptstyle	-2(-1)^{m}\int_{\partial_{-}M} \left.\frac{\partial}{\partial\tau}\right|_{\tau=0
	}i^{*}_{-}\mathbf{\widetilde{e}_{b}}(\partial M,g,g+\tau g\xi)\wedge
	\omega(\nabla^{E},b_{z-\mathbf{i}})\\
&&\nonumber\scriptstyle	      +(-1)^{m+1}\mathsf{rank}(E)\int_{\partial_{-} M}\left.\frac{\partial}{\partial\tau}\right|_{\tau=0}
	      i^{*}_{-}B(\partial M,g+\tau g\xi),
\end{eqnarray}
for $z\in\C$ with $|z-\mathbf{i}|\leq\sqrt{2}$. Finally (\ref{equation_anomalyformula_2}) follows from setting $z=i$ into (\ref{equation_proof_equation_anomalyformula_2_3}) and then $b_{0}=b$ follows from
(\ref{family_b_z}).
 \end{proof}

\section{Complex-valued analytic torsion on compact Bordisms}
\label{Subsection_Complex analytic Ray--Singer torsion}
Let $(M,\partial_{+}M,\partial_{-}M)$ be a Riemannian bordism
and $E$ be complex flat vector bundle over $M$ endowed with a nondegenerate symmetric bilinear form.
Consider $\Delta_{\mathcal{B}}$ the $L^{2}$-realization of the bilinear
Laplacian acting on $E$-valued smooth forms satisfying absolute boundary conditions on $\partial_{+}M$ and
relative ones on $\partial_{-}M$.

If $\Omega_{\Delta_{\mathcal{B}}}(0)$ is the $0$-generalized eigenspace of $\Delta_{\mathcal{B}}$, consider the
restriction of $\beta_{g,b}$ to
$\Omega_{\Delta_{\mathcal{B}}}(0)$; this is a
non degenerate symmetric bilinear form in view of Proposition \ref{Proposition_orthogonal_decomposition_smooth_forms}.
By \cite[Lemma 3.3]{Burghelea-Haller} we obtain
a nondegenerate bilinear form on
$\det H\left({\Omega_{\Delta_{\mathcal{B}}}(0)}\right)$, which in turn,
by Proposition \ref{Proposition_Absolute_relative_cohomology}, induces a bilinear form on
$\det(H(M,\partial_{-}M;E))$, which we
denote by $\tau(0)_{E,g,b}$.
Let us denote by
$$
\Delta_{\mathcal{B},q}^{\mathsf{c}}:=\left.\Delta_{\mathcal{B}}\right|_{\left.{\Omega^{q}_{\Delta_{\mathcal{B}}}(M;E)(0)^{\mathsf{c}}}\right.|_{\mathcal{B}}}
$$
the restriction of $\Delta_{\mathcal{B}}$
to $\left.{\Omega^{q}_{\Delta_{\mathcal{B}}}(M;E)(0)^{\mathsf{c}}}\right.|_{\mathcal{B}}$, i.e., the space
of smooth differential forms of degree $q$
which are not in  $\Omega_{\Delta_{\mathcal{B}}}(M;E)(0)$ but satisfy boundary conditions.
Lemma \ref{Proposition_spectrum_Delta_L_2_invariance_of_domain_of_delta_contained_in_domain_of_d}
permits us to choose a non-zero Agmon angle avoiding the
spectrum of
$\Delta_{\mathcal{B},q}^{\mathsf{c}}$ so that
complex powers of the bilinear Laplacian can be defined.
Then, the function
$
s\mapsto(\Delta_{\mathcal{B},q}^{\mathsf{c}})^{-s}
$
associates to each $s\in\C$, with $\mathsf{Re}(s)>\mathsf{dim}(M)/2$, an operator of
Trace class and it extends to a meromorphic function on the complex plane which is holomorphic at $0$,
see \cite{Greiner}, \cite{Seeley}, \cite{Seeleya} and\cite{Seeleyb} or more generally, for pseudo-differential boundary value problems,
see \cite[Chapter 4]{Grubb}. The
$\zeta$-regularized determinant of $\Delta_{\mathcal{B},q}$ is defined as
$$
{\det}^{\prime}\left(\Delta_{\mathcal{B},q}\right):=
\exp(-\left.\frac{\partial}{\partial s}\right|_{s=0}\Tr((\Delta_{\mathcal{B},q}^{\mathsf{c}})^{-s})).
$$
From Lemma \ref{Proposition_spectrum_Delta_L_2_invariance_of_domain_of_delta_contained_in_domain_of_d}
this determinant does not depend on the choice of the Agmon's angle.
By using \cite[Lemma 3.3]{Burghelea-Haller}, the complex-valued Ray--Singer torsion on the bordism
$(M,\partial_{+}M,\partial_{-}M)$ is defined as the bilinear form on the determinant line $\det H(M,\partial_{-}M;E)$ given by
$$
  \tau_{E,g,b}:=\tau(0)_{E,g,b}\prod_{q}\left({\det}^{\prime}\left(\Delta_{\mathcal{B},q}\right)\right)^{(-1)^{q}q}.
$$
The following generalizes the formulas
obtained in \cite{Burghelea-Haller} in the case
without boundary and they are based on the corresponding ones for
the Ray--Singer metric in \cite{Bruening-Ma}.
They also coincide with
the ones obtained by Su in odd dimensions, but they do not
require that the
smooth variations of $g$ and
$b$ are supported on a compactly supported in the interior of $M$, see \cite{Su1}.
 \begin{thm}(\textbf{Anomaly formulas})
\label{Theorem_Anomaly_formulas_complex_Analytic_Ray_Singer_torsion}
  Let $(M,\partial_{+}M,\partial_{-}M)$ be a compact Riemannian bordism
  and $E$ be complex flat vector bundle over $M$.
  Consider $g_{u}$ a smooth one-parameter family of Riemannian metrics on $M$
  and  $b_{u}$ a smooth one-parameter family of a fiber wise nondegenerate symmetric
  bilinear forms on $E$ and denote by $\dot{g}_{t}$ and $\dot{b}_{t}$
  their corresponding infinitesimal variations. Let $\tau_{E,g_{u},b_{u}}$ the associated
  family of complex valued analytic torsions.
  Then, we have the following logarithmic derivative
\begin{eqnarray}
  \nonumber\left.\frac{\partial}{\partial w}\right|_{u}
  \left(\frac{\tau_{E,g_{w},b_{w}}}{\tau_{E,g_{u},b_{u}}}\right)^{2}
  =\mathbf{E}(b_{u},g_{u})+ \mathbf{\tilde{E}}(b_{u},g_{u})+ \mathbf{B}(g_{u}),
  \end{eqnarray}
 where $\omega(\nabla^{E},b):=-\frac{1}{2}\Tr(\left.b\right.^{-1}\nabla^{E}b)$
  is the Kamber--Tondeur form, see \cite[Section 2.4]{Burghelea-Haller} and
\begin{eqnarray}
	      \nonumber\scriptstyle\mathbf{E}(b_{u},g_{u})&:=&
	      \scriptstyle\int_{M}\Tr(b^{-1}_{u}\dot{b}_{u})\mathbf{e}(M,g)
	    +(-1)^{m-1}\int_{\partial_{+} M}\Tr(b^{-1}_{u}\dot{b}_{u})
	      \mathbf{e_{b}}(\partial M,g_{u})\\
	    &&\nonumber\scriptstyle-\int_{\partial_{-} M}\Tr({b^{\prime}}^{-1}_{u}{\dot{b}}^{\prime}_{u}))
	    \mathbf{e_{b}}(\partial M,g_{u}),
  \end{eqnarray}
\begin{eqnarray}
	     \nonumber \scriptstyle\mathbf{\tilde{E}}(b_{u},g_{u})&:=&\scriptstyle-2\int_{M} \left.\frac{\partial}{\partial t}\right|_{t=0
	      }\mathbf{\widetilde{e}}(M,g_{u},g_{u}+t \dot{g}_{u})\wedge
		  \omega(\nabla^{E},b_{u})\\
	    &&\nonumber\scriptstyle-2\int_{\partial_{+}M} \left.\frac{\partial}{\partial t}\right|_{t=0
	    }i^{*}_{+}\mathbf{\widetilde{e}_{b}}(\partial M,g_{u},g_{u}+t \dot{g}_{u})\wedge
	  \omega(\nabla^{E},b_{u})\\
    &&\nonumber\scriptstyle-2(-1)^{m}\int_{\partial_{-}M} \left.\frac{\partial}{\partial t}\right|_{t=0
	  }i^{*}_{-}\mathbf{\widetilde{e}_{b}}(\partial M,g_{u},g_{u}+t \dot{g}_{u})\wedge
	  \omega(\nabla^{E},b_{u}),
  \end{eqnarray}
\begin{eqnarray}
	  \nonumber\scriptstyle\mathbf{B}(g_{u})&:=&\scriptstyle\mathsf{rank}(E)\int_{\partial_{+} M}\left.\frac{\partial}{\partial t}\right|_{t=0}
	  i_{+}^{*}B(\partial M,g_{u}+ t \dot{g}_{u})\\
	  &&\nonumber\scriptstyle+(-1)^{m+1}\mathsf{rank}(E)\int_{\partial_{-} M}\left.\frac{\partial}{\partial t}\right|_{t=0}
	i_{-}^{*}B(\partial M,g_{u}+t \dot{g}_{u}),
  \end{eqnarray}
  \end{thm}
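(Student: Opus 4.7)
The plan is to differentiate the logarithm of $\tau^2_{E,g_u,b_u}$ with respect to $u$ and reduce the computation to the constant terms in the heat trace asymptotic expansions that are controlled by Theorem~\ref{Theorem infinitesimal_constant_terms_in_asymptotic_expansions_bilinear_case_relative_and absolute_boundary_conditions}. Writing $\tau^{2}_{E,g_u,b_u}=\tau(0)_{E,g_u,b_u}^{2}\prod_{q}({\det}'\Delta_{\mathcal{B},q,u})^{2(-1)^{q}q}$, the variation splits into a contribution from the determinants on the complement of $\ker\Delta_{\mathcal{B}}$ and a contribution from the induced bilinear form $\tau(0)$ on the finite-dimensional generalized $0$-eigenspace. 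For the determinant part, the standard Ray--Singer differentiation identity, valid in the present non-selfadjoint setting by the spectral theory of Lemma~\ref{Proposition_spectrum_Delta_L_2_invariance_of_domain_of_delta_contained_in_domain_of_d}, expresses $\partial_u\log{\det}'(\Delta_{\mathcal{B},q,u})$ as $-\LIM_{t\to 0}\Tr((\partial_u\Delta_{\mathcal{B},q,u})\Delta_{\mathcal{B},q,u}^{-1}\exp(-t\Delta_{\mathcal{B},q,u}))$ on the complement of the kernel, up to a contribution that cancels against the variation of $\tau(0)$ (this is Burghelea--Haller's device, cf.\ \cite{Burghelea-Haller} Sections~7--8).

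Next I would package the alternating sum $\sum_{q}(-1)^{q}q\cdot(\cdot)$ into supertraces on the full $\Z$-graded space $\Omega(M;E)$, using the number operator $\mathbf{N}$. This is the step where the classical Ray--Singer reformulation applies: after using the commutation of $\mathop{d_{E}}$ and $\mathop{d^{\sharp}_{E,g,b}}$ with the spectral projector (Lemma~\ref{Proposition_finite_dimensional_generalized_eigen_spaces}) together with $\beta_{g,b}$-orthogonality of the Hodge decomposition (Proposition~\ref{Proposition_from_Hodge_decomposition_2}), the derivative collapses to expressions of the form $\LIM_{t\to 0}\STr(\Phi\exp(-t\Delta_{\mathcal{B},b_u}))$ with $\Phi$ being a bundle endomorphism built from $b_u^{-1}\dot b_u$ on the $E$-side and from $-\Psi_u=-\mathbf{D}^{*}\xi_u+\tfrac12\Tr(\xi_u)$, where $\xi_u:=g_u^{-1}\dot g_u$, on the form side. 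Here one uses, exactly as in \cite{Bismut-Zhang}, that $-\star_{b_u}^{-1}\partial_u\star_{b_u}=\mathbf{D}^{*}\xi_u-\tfrac12\Tr(\xi_u)+(\text{pure $E$-part})$, so the full variation of the codifferential yields two separable endomorphism factors.

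With this reduction in hand, I would apply Theorem~\ref{Theorem infinitesimal_constant_terms_in_asymptotic_expansions_bilinear_case_relative_and absolute_boundary_conditions} to each term: the metric part contributes the $\mathbf{\widetilde{e}}$, $\mathbf{\widetilde{e}_b}$, and $B$-integrals (with the correct absolute/relative signs on $\partial_{\pm}M$), giving $\mathbf{\tilde{E}}(b_u,g_u)+\mathbf{B}(g_u)$. For the bilinear part, the endomorphism $\phi=b_u^{-1}\dot b_u$ produces the $\mathbf{e}$ and $\mathbf{e_b}$ terms; the sign difference on $\partial_{-}M$ reflects the Poincar\'e-duality relation $\Tr(b_u^{-1}\dot b_u)=-\Tr({b'}_u^{-1}\dot b'_u)$ up to the parity already built into the anomaly formula \eqref{equation_anomalyformula_1}, yielding precisely $\mathbf{E}(b_u,g_u)$. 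Summing the three contributions gives the asserted logarithmic derivative.

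The hard part will be the bookkeeping at the reduction step: verifying that the finite-dimensional terms coming from $\partial_u\tau(0)_{E,g_u,b_u}$ cancel exactly against the singular parts at $s=0$ of the derivatives of the $\zeta$-functions (since in the non-selfadjoint situation the kernel need not coincide with the $0$-eigenspace and one must work with the generalized eigenspace decomposition from Lemma~\ref{Kato}), and that the commutator manipulations legitimate replacing the $\Tr$ on $\left.{\Omega^{q}_{\Delta_{\mathcal{B}}}(M;E)(0)^{\mathsf{c}}}\right.|_{\mathcal{B}}$ by the supertrace on the full $\Omega(M;E)$ inside a $\LIM_{t\to 0}$. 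Once this is established, the anomaly formulas follow mechanically from Theorem~\ref{Theorem infinitesimal_constant_terms_in_asymptotic_expansions_bilinear_case_relative_and absolute_boundary_conditions}, which is why the core analytic work of the paper is concentrated there rather than here.
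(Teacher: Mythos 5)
Your proposal is correct and follows essentially the same route as the paper: the paper's proof simply invokes the variational method of Burghelea--Haller (Section 6, formula (54) of \cite{Burghelea-Haller}), which is exactly the reduction you spell out -- differentiating $\log\tau^{2}$, cancelling the finite-dimensional $\tau(0)$ contribution against the $s=0$ behaviour of the $\zeta$-derivatives, and packaging the result as $\LIM_{t\to 0}\STr(\phi\exp(-t\Delta_{\mathcal{B}}))$ and $\LIM_{t\to 0}\STr(-\Psi\exp(-t\Delta_{\mathcal{B}}))$ with $\phi=b_u^{-1}\dot b_u$ and $\xi=g_u^{-1}\dot g_u$, to which Theorem \ref{Theorem infinitesimal_constant_terms_in_asymptotic_expansions_bilinear_case_relative_and absolute_boundary_conditions} is then applied. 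Your identification of the delicate bookkeeping (generalized $0$-eigenspace versus kernel, validity of the Hodge-decomposition cancellations in the non-selfadjoint setting) matches what the paper delegates to \cite{Burghelea-Haller} and \cite{Su1}.
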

\begin{proof}
 The method described in \cite[Section 6]{Burghelea-Haller} leading
 to the infinitesimal variation of the torsion in the closed situation also holds
 in the situation with boundary; this was also used in \cite{Su1}.
 In particular, by \cite[formula (54)]{Burghelea-Haller},
 the problem of computing this infinitesimal variation boils down to computing
 $\LIM_{t\rightarrow 0}(\STr(\phi \exp(-t\Delta_{\mathcal{B}})))$ and $\LIM_{t\rightarrow 0}(\STr(-\Psi \exp(-t\Delta_{\mathcal{B}})))$
 associated to $\Delta_{\mathcal{B}}$ with
 $\phi=b_{u}^{-1}\dot{b}_{u}$ and  $\xi=g_{u}^{-1}\dot{g}_{u}$ respectively given
 by (\ref{equation_anomalyformula_1}) and (\ref{equation_anomalyformula_2}) in
 Theorem \ref{Theorem infinitesimal_constant_terms_in_asymptotic_expansions_bilinear_case_relative_and absolute_boundary_conditions}.
\end{proof}

  \section{Appendix}
  \label{section_Appendix}
  The material in this section, entirely contained in \cite{Bruening-Ma}, summarizes the background needed to understand the characteristic forms
  appearing in the anomaly formulas in
  Sections \ref{section_Heat_asymptotics and anomaly formulas} and \ref{Subsection_Complex analytic Ray--Singer torsion}.
  \subsection{The Berezin integral and Pfaffian}
  \label{section_The Berezin integral and Pfaffian}
  For $A$ and $B$ two $\Z_{2}$ graded unital algebras,
  $A\widehat{\otimes}B$ denotes their $\Z_{2}$-\textit{graded tensor product}. We write
  $A:=A\widehat{\otimes} I$, $\widehat{B}:=I\widehat{\otimes} B$ and $\wedge:=\widehat{\otimes},$ so that
  $A\wedge\widehat{B}=A\widehat{\otimes} B.$

  For $W$ and $V$ finite dimensional vector
  spaces of dimension $n$ and $l$ respectively, where $W$ is endowed with a Hermitian product $\langle\cdot,\cdot\rangle$
  and $V^{\prime}$ the dual of $V$, the \textit{Berezin integral} $\int^{B}:\Lambda V^{\prime}\wedge\widehat{\Lambda(W^{\prime})}\rightarrow\Lambda V^{\prime}\otimes\Theta_{W}$ 
  associates to each element $\alpha\wedge\widehat{\beta}$ in the $\Z_{2}$-graded tensor product
  $\Lambda V^{\prime}\wedge\widehat{\Lambda(W^{\prime})}$ the element $C_{B}\beta_{g,b}(w_{1},\ldots,w_{n})$ in $\Lambda V^{\prime}\otimes\Theta_{W}$, 
  where $\{w_{i}\}_{i=1}^{n}$ is an orthonormal basis of $W$, $\Theta_{W}$ is the orientation bundle of $W$ and the constant
  $C_{B}:=(-1)^{n(n+1)/2}\pi^{-n/2}$. Now, each antisymmetric endomorphism $K$ of $W$ can be identified with
  the unique element $\mathbf{K}:=\langle\cdot,K\cdot\rangle$ in $\Lambda(W^{\prime})$ given by
  $$
  \mathbf{K}:=\frac{1}{2}\sum_{1\leq i,j\leq n}\langle w_{i},K w_{j}\rangle\widehat{w^{i}}\wedge\widehat{w^{j}},
  $$
  where $\{w^{i}\}_{i=1}^{n}$ is the corresponding dual basis in $W^{\prime}$.
  Then, $\mathbf{Pf}\left(\mathbf{K}/2\pi\right)$, the \textit{Pfaffian} of $\mathbf{K}/2\pi$, is defined by
  $$
  \mathbf{Pf}\left(\mathbf{K}/2\pi\right):=\int^{B}\exp(\mathbf{K}/2\pi).
  $$
  Remark that $\mathbf{Pf}\left(\mathbf{K}/2\pi\right)=0$, if $n$ is odd.
  By standard fiberwise considerations the map $\mathbf{Pf}$ is extended for
  vector bundles over $M$.
  \subsection{Certain characteristic forms on the boundary}
  We denote by $g:=g^{TM}$ (resp.  $g^{\partial}:=g^{T\partial M}$) the Riemannian metric on $TM$
  (resp. on $T\partial M$ and induced by $g$), by $\nabla$ (resp. $\nabla^{\partial}$) the
  corresponding Levi-Civita connection and by $\mathsf{R}^{{TM}}$ (resp. $\mathsf{R}^{{T\partial M}}$) its curvature.
  Let $\{e_{i}\}_{i=1}^{m}$ be an orthonormal frame of $TM$ with the property that near the
  boundary, $e_{m}=\varsigma_{\mathsf{in}}$, i.e., the inwards pointing geodesic unit normal vector field on the boundary.
  The corresponding induced orthonormal local frame on $T\partial M$ will be denoted by $\{e_{\alpha}\}_{\alpha=1}^{m-1}$. As usual, the metric is used to fix
  $\{e^{i}\}_{i=1}^{m}$ (resp. $\{e^{\alpha}\}_{\alpha=1}^{m-1}$) the corresponding dual frame of $T^{*}M$ (resp. $T^{*}\partial M$).

  With the notation in Section \ref{section_The Berezin integral and Pfaffian}, a smooth section $w$ of $\Lambda T^{*}M$ 
  is identified with the section $w\widehat{\otimes}1$ of $\Lambda T^{*}M\widehat{\otimes}\widehat{\Lambda T^{*}M}$, whereas
  $\widehat{w}$ is in one-to-one correspondance with the section $1\widehat{\otimes}\widehat{w}$ of $\Lambda T^{*}M\widehat{\otimes}\widehat{\Lambda T^{*}M}$.
  Here one considers the Berezin integrals
  $
    \int^{B_{M}}:\Gamma(M;\Lambda T^{*}M\wedge\widehat{\Lambda T^{*}M})\rightarrow\Gamma(M;\Lambda T^{*}M\otimes\Theta_{M})
  $
  and
  $
    \int^{B_{\partial M}}:\Gamma(\partial M;\Lambda T^{*}\partial M\wedge\widehat{\Lambda(T^{*}\partial M)})\rightarrow\Gamma(\partial M;\Lambda T^{*}\partial M\otimes\Theta_{\partial M})
  $
  which can be compared under the taken convention for the induced orientation bundle on the boundary
  discussed in Section \ref{Section_Complex valued analytic torsion on Manifolds with Boundary}.

  The curvature
 $\mathsf{R}^{{TM}}$ associated to $\nabla$, considered as a smooth section of
 $\Lambda^{2}(T^{*}M)\wedge\widehat{\Lambda^{2}(T^{*}M})\rightarrow M,$
 can be expanded in terms of the frame above as 
$$
\begin{array}{rcll}
\scriptstyle\mathbf{R}^{{TM}}&\scriptstyle:=&\scriptstyle\frac{1}{2}\sum_{1\leq k,l\leq m}g^{{TM}}\left(e_{k},\mathsf{R}^{{TM}}e_{l}\right)\widehat{e^{k}}\wedge\widehat{e^{l}}
&\scriptstyle\in\quad\Gamma({M};\Lambda^{2}(T^{*}{M})\wedge\widehat{\Lambda^{2}({T^{*}M})})\\
\end{array}
$$
In the same way, one first sets
\begin{equation}
\label{equation_definition_of_S_both_boundaries}
\begin{array}{l}
\scriptstyle i^{*}\mathbf{R}^{{TM}}\scriptstyle:=\scriptstyle\frac{1}{2}\sum\limits_{1\leq k,l\leq m}
g^{{TM}}(e_{k},i^{*}\mathsf{R}^{{TM}}e_{l})\widehat{e^{k}}\wedge\widehat{e^{l}}
\quad\in\quad\Gamma({\partial M};\Lambda^{2}(T^{*}{\partial M})\wedge\widehat{\Lambda^{2}({T^{*}M})}),\\

\scriptstyle\left.\mathbf{R}^{{TM}}\right|_{\partial M}\scriptstyle:=\scriptstyle\frac{1}{2}\sum\limits_{1\leq \alpha,\beta\leq m-1}
g^{{TM}}(e_{\alpha},i^{*}\mathsf{R}^{{TM}}e_{\beta})\widehat{e^{\alpha}}\wedge\widehat{e^{\beta}}
\scriptstyle\quad\in\quad\Gamma({\partial M};\Lambda^{2}(T^{*}{\partial M})\wedge\widehat{\Lambda^{2}({T^{*}(\partial M)}))}),\\

\scriptstyle\mathbf{R}^{{T \partial M}}
\scriptstyle:=\scriptstyle\frac{1}{2}\sum\limits_{1\leq \alpha,\beta\leq m-1}
g^{{T \partial M}}(e_{\alpha},i^{*}\mathsf{R}^{{TM}}e_{\beta})\widehat{e^{\alpha}}\wedge\widehat{e^{\beta}}
\scriptstyle\quad\in\quad\Gamma({\partial M};\Lambda^{2}(T^{*}{\partial M})\wedge\widehat{\Lambda^{2}({T^{*}(\partial M)}))}),\\

\scriptstyle\mathbf{S}\scriptstyle:=
\scriptstyle\frac{1}{2}\sum\limits_{\beta=1}^{m-1}
\left(\sum\limits_{\alpha=1}^{m-1}g^{{TM}}(\nabla^{{TM}}_{e_{\alpha}}\varsigma_{\mathsf{in}},e_{\beta})e^{\alpha}\right)\wedge\widehat{e^{\beta}}
\scriptstyle\quad\in\quad\Gamma({\partial M};T^{*}{\partial M}\wedge\widehat{\Lambda^{1}({T^{*}(\partial M)})})\\
\end{array}
\end{equation}
to define
\begin{equation}
\label{label_in_article_differential_forms_boundary}
\begin{array}{rcl}
\mathbf{e}({M},\nabla^{{T M}})&:=&\int^{B_{M}}\exp\left(-\frac{1}{2}\mathbf{R}^{{T M}}\right),\\

\mathbf{e}({\partial M},\nabla^{{T \partial M}})&:=&\int^{B_{\partial M}}\exp\left(-\frac{1}{2}
\mathbf{R}^{{T \partial M}}\right),\\

\mathbf{e}_{\mathbf{b}}({\partial M}, \nabla^{{T M}})&:=&
(-1)^{m-1}\int^{B_{\partial M}}\exp\left(-\frac{1}{2}(\mathbf{R}^{{T M}}|_{\partial  M})\right)\sum_{k=0}^{\infty}\frac{\mathbf{S}^{k}}{2\Gamma(\frac{k}{2}+1)},\\

B({\partial M}, \nabla^{{T M}})  &:=&
-\int^{1}_{0}\frac{du}{u}\int^{B_{\partial M}}\exp\left(-\frac{1}{2}\mathbf{R}^{{T \partial M}}-u^{2}\mathbf{S}^{2}\right)\sum_{k=1}^{\infty}\frac{\left(u\mathbf{S}\right)^{k}}{2\Gamma(\frac{k}{2}+1)}.

\end{array}
\end{equation}
\subsection{Secondary characteristic forms}
Now, given $\{g_{s}:=g_{s}^{TM}\}_{s\in\R}$ (resp. $\{g_{s}^{\partial}:=g_{s}^{T\partial M}\}_{s\in\R}$) a smooth family
of Riemannian metrics on $TM$ (resp. the induced family of metrics on $T\partial M$), we sketch the construction given in \cite{Bruening-Ma}
for the (secondary) \textit{Chern--Simons forms}  $\widetilde{\mathbf{e}}\left(M,g_{0},g_{s}\right)\in\Omega^{m-1}(M,\Theta_{M})$
and $\left.\widetilde{\mathbf{e}}_{\mathbf{b}}\right.\left( \partial M,g_{0},g_{s}\right)\in\Omega^{m-2}(\partial M,\Theta_{M})$
(see also \cite[(4.53)]{Bismut-Zhang}).

Let
$\nabla_{s}:=\nabla_{g_{s}}^{TM}$ and $\mathsf{R}_{s}:={\mathsf{R}}^{TM}_{g_{s}}$ (resp.
$\nabla_{s}^{\partial}:=\nabla_{g^{\partial}_{s}}^{T\partial M}$ and $\mathsf{R}^{\partial}_{s}:=\mathsf{R}^{T\partial M}_{g^{\partial}_{s}}$)
be the Levi-Civit\`a connections and curvatures
on $TM$ (resp. on $T\partial M$) associated to the metrics $g_{s}$ (resp. $g^{\partial}_{s}$).
Consider the \textit{deformation spaces} $\widetilde{M}:=M\times\R$ (resp. $\widetilde{\partial M}:=\partial M\times \R$) with
$\pi_{\widetilde{M}}:\widetilde{M}\rightarrow \R\text{ and }\mathbf{p}_{M}:\widetilde{M}\rightarrow M,$ its canonical projections (resp.
$\pi_{\widetilde{\partial M}}:\widetilde{\partial M}\rightarrow \R\text{ and }\mathbf{p}_{\partial M}:
\widetilde{\partial M}\rightarrow \partial M$).
If $\widetilde{i}:=i\times\mathbf{id_\R}:\widetilde{\partial M}\rightarrow \widetilde{M}$ is the natural embedding
induced by $i:\partial M\rightarrow M$,
then
$\pi_{\widetilde{\partial M}}=\pi_{\widetilde{M}}\circ \widetilde{i}$.
The \textit{vertical bundle} of the fibration $\pi_{\widetilde{M}}:\widetilde{M}\rightarrow \R$ (resp. $\pi_{\widetilde{\partial M}}:\widetilde{\partial M}\rightarrow\R$)
is given as the pull-back of the tangent bundle $TM\rightarrow M$ along  $\mathbf{p}_{M}:\widetilde{M}\rightarrow M$
(resp. the pull-back of $T\partial M\rightarrow \partial M$ along  $\mathbf{p}_{\partial M}:\widetilde{\partial M}\rightarrow \partial M$), i.e.,
\begin{equation}
 \label{equation_mathcal_T_M}
 \mathcal{T M}:=\mathbf{p}_{M}^{*}TM\rightarrow \widetilde{M},\quad(\text{resp. }\mathcal{T\partial M}:=\mathbf{p}_{\partial M}^{*}T\partial M\rightarrow \widetilde{\partial M})
\end{equation}
and it is considered as a subbundle of $T\widetilde{M}$ (resp. $T\widetilde{\partial M}$).
The bundle $\mathcal{T M}$ (resp. $\mathcal{T\partial M}$) in (\ref{equation_mathcal_T_M}) is naturally endowed with a Riemannian metric $g^{\mathcal{T M}}$ which
coincides with $g_{s}$ (resp. $g^{\partial}_{s}$) at $M\times\{s\}$ (resp. $\partial M\times\{s\}$), and for which
there exists a unique natural metric connection $\nabla^{\mathcal{T M}}$ (resp $\nabla^{\mathcal{T \partial M}}$) and the
corresponding curvature tensor is denoted by $\mathsf{R}^{\mathcal{T M}}$ (resp $\mathsf{R}^{\mathcal{T \partial M}}$). For more details,
see \cite[Section 1.5, (1.44) and Definition 1.1]{Bruening-Ma}, and also \cite[(4.50) and (4.50)]{Bismut-Zhang}).
Near the boundary, consider orthonormal frames of $\mathcal{TM}$ such that
$e_{m}(y, s) = \varsigma_{\mathsf{in}}$ for each $y\in\partial M$ with respect to the metric $g_{s}$. Finally,
by using the
formalism described above associated to $\mathsf{R}^{\mathcal{T M}}$ and $\mathsf{R}^{\mathcal{T \partial M}}$ to define (\ref{label_in_article_differential_forms_boundary}),
if $\mathbf{incl}_{s}: M \rightarrow  \widetilde{M}$
is the inclusion map given by $\mathbf{incl}_{s}(x)=(x,s)$ for $x_{0}\in M$ and $s\in \R$, then, one defines
\begin{equation}
  \label{definition_of_Chern_Simons_secondary_classes_1}
  \begin{array}{rcl}
  \widetilde{\mathbf{e}}\left(M,g_{0},g_{\tau}\right)&:=&
  \int_{0}^{\tau}\mathbf{incl}^{*}_{s}
  \left(\iota\left(\frac{\partial}{\partial s}\right)
  \mathbf{e}(\widetilde{M},\nabla^{\mathcal{T M}})\right)ds\\
  \left.\widetilde{\mathbf{e}}_{\mathbf{b}}\right.\left( \partial M,g_{0},g_{\tau}\right)&:=&
  \int_{0}^{\tau}\mathbf{incl}^{*}_{s}
  \left(\iota\left(\frac{\partial}{\partial s}\right)
  \left.\mathbf{e}_{\mathbf{b}}\right.(\widetilde{\partial M}, \nabla^{\mathcal{T M}})\right) ds,\\
  \end{array}
\end{equation}
where $\iota(X)$ indicates the contraction with respect to the vector field $X$.

\end{document}